\newtheorem{thm}{Theorem}[section]
\newtheorem{prop}[thm]{Proposition}
\newtheorem{lem}[thm]{Lemma}
\newtheorem{cor}[thm]{Corollary}  \theoremstyle{definition}
\newtheorem{df}[thm]{Definition}   \theoremstyle{definition}
\newtheorem{prob}[thm]{Problem}
\newtheorem{rem}[thm]{Remark}                \theoremstyle{plain}
 \theoremstyle{definition}
\newtheorem{ex}[thm]{Example}   \def\CC{\Bbb{C}}
\def\RR{\Bbb{R}}  
\def\CCI{\hat{\CC}}        \def\NN{\Bbb{N}} 
\def\g{\gamma}
\def\G{\Gamma}
\def\GN{\Gamma ^{\NN }}
\def\B1{{\rm\kern.32em\vrule    width.12em       height1.4ex
depth-.05ex\kern-.28em 1}}
\def\pc{\pi _{\CCI }}
\def\GNCR{\GN \times \CCI \rightarrow \GN \times \CCI }
\begin{document}
\title
{Dynamics of 
postcritically bounded 
polynomial 
semigroups II: fiberwise dynamics and the Julia sets
\footnote{Date: November 29, 2013. Published in 
J. London Math. Soc. (2) 88 (2013) 294--318. 
2010 Mathematics Subject Classification. 
37F10, 37C60. This work was partially supported by JSPS Grant-in-Aid for
 Scientific Research (C) 21540216. 
Keywords: Polynomial semigroups, 
random complex dynamics, random iteration, 
skew product, 
Julia sets, fiberwise Julia sets.}}
\author{Hiroki Sumi\\ 
 Department of Mathematics,  
Graduate School of Science\\ 
Osaka University \\ 1-1, \ Machikaneyama,\ Toyonaka,\ Osaka,\ 560-0043,\ 
Japan\\ E-mail: sumi@math.sci.osaka-u.ac.jp\\ 
http://www.math.sci.osaka-u.ac.jp/$\sim $sumi/welcomeou-e.html}
\date{}
\maketitle 

\begin{abstract}
We investigate the dynamics of 
semigroups generated by polynomial maps on the Riemann sphere 
such that the postcritical set in the complex plane is bounded. 
Moreover, we investigate the associated random dynamics of polynomials.
 Furthermore, we investigate the fiberwise dynamics of skew products 
 related to  
polynomial semigroups with bounded planar postcritical set. 
Using uniform 
fiberwise quasiconformal surgery 
on a fiber bundle, 
we show that 
if the Julia set of such a semigroup 
is disconnected, then 
there exist families of uncountably many mutually disjoint quasicircles 
with uniform dilatation which are parameterized by the Cantor set, densely 
inside the Julia set of the semigroup. 
Moreover, we give a sufficient condition for a fiberwise Julia set $J_{\gamma }$ 
to satisfy that $J_{\gamma }$ is a Jordan curve but not a quasicircle, 
the unbounded component of $\CCI \setminus J_{\gamma }$ is a John domain and 
the bounded component of $\CC \setminus J_{\gamma }$ is not a John domain.   
We show that 
under certain conditions, 
a random Julia set is almost surely a Jordan curve, but not a quasicircle.
Many new phenomena of polynomial semigroups and random dynamics of 
polynomials that do not occur in the usual dynamics of polynomials  
are found and systematically investigated.
\end{abstract}

\section{Introduction}
 The theory of complex dynamical systems, which has 
 its origin in the important 
 work of Fatou and Julia in the 1910s, 
 has been investigated by many people and discussed in depth.  
In particular, since D. Sullivan showed the famous 
``no wandering domain theorem'' using 
Teichm\"{u}ller theory 
in the 1980s, 
this subject has 
attracted 
many researchers 
from a
wide area. 
For a general 
reference on complex dynamical systems, 
see Milnor's textbook \cite{M}.   
 
 There are several 
areas
 in which we deal with 
  generalized notions of 
classical iteration theory of rational functions.   
One of them is the theory of 
dynamics of rational semigroups 
  (semigroups generated by holomorphic maps on the 
  Riemann sphere $\CCI $), and another one is 
 the theory of   
 random dynamics of holomorphic maps on the Riemann sphere. 

In this paper, we will discuss 
these subjects.
 A {\bf rational semigroup} is a semigroup 
generated by a family of non-constant rational maps on 
$\CCI $, where $\CCI $ denotes the Riemann sphere,
 with the semigroup operation being  
functional composition (\cite{HM1}). A 
{\bf polynomial semigroup} is a 
semigroup generated by a family of non-constant 
polynomial maps.
Research on the dynamics of
rational semigroups was initiated by
A. Hinkkanen and G. J. Martin (\cite{HM1}),
who were interested in the role of the
dynamics of polynomial semigroups while studying
various one-complex-dimensional
moduli spaces for discrete groups,
and
by F. Ren and Z. Gong (\cite{GR}) and others, 
 who studied 
such semigroups from the perspective of random dynamical systems.
Moreover, the research 
on 
rational semigroups is related to 
that 
on 
``iterated function systems" in 
fractal geometry.  
In fact, 
the Julia set of a rational semigroup generated by a 
compact family has 
`` backward self-similarity" 
(cf. \cite{S3,S1}). 
\cite{St} is a very nice (and short) article for an introduction to the dynamics of rational semigroups. 
For 
other 
research 
on rational semigroups, see 
\cite{SY, SSS, 
SS, SU1, SU2}, and \cite{S5}--\cite{Scpsb}. 

 Research 
on the  
dynamics of rational semigroups is also directly related to 
that 
on the 
random dynamics of holomorphic maps. 
The first 
study 
in this 
direction was 
by Fornaess and Sibony (\cite{FS}), and 
much research has followed. 
(See \cite{Br, Bu1, Bu2, 
BBR,GQL,S9, SdpbpIII,Splms,Ssugexp, Scpsb,S8}.)   

 We remark that complex dynamical systems 
 can be used to describe some mathematical models. For 
 example, the behavior of the population 
 of a certain species can be described as the 
 dynamical system of a polynomial 
 $f(z)= az(1-z)$ 
 such that $f$ preserves the unit interval and 
 the postcritical set in the plane is bounded 
 (cf. \cite{D}). From this point of view, 
 it is very important to consider the random 
 dynamics of such polynomials (see also Example~\ref{realpcbex}). 
 The results of this paper might have applications to mathematical models. 
For the random dynamics of polynomials on the unit interval, 
see \cite{Steins}. 
 
 We shall give some definitions 
for the 
dynamics of rational semigroups: 
\begin{df}[\cite{HM1,GR}] 
Let $G$ be a rational semigroup. We set
\[ F(G) = \{ z\in \CCI \mid G \mbox{ is normal in a neighborhood of  $z$} \} ,
\ J(G)  = \CCI \setminus  F(G) .\] \(  F(G)\) is  called the
{\bf Fatou set}  of  $G$ and \( J(G)\)  is  called the {\bf 
Julia set} of $G$. 
We 
let 
$\langle h_{1},h_{2},\ldots \rangle $ 
denote 
the 
rational semigroup generated by the family $\{ h_{i}\} .$
The Julia set of the semigroup generated by 
a single map $g$ is denoted by 
$J(g).$ 
\end{df}

\begin{df}
For each rational map $g:\CCI \rightarrow \CCI $, 
we set 
$CV(g):= \{ \mbox{all critical values of }
g: \CCI \rightarrow \CCI \} .$ 
Moreover, for each polynomial map $g:\CCI \rightarrow \CCI $, 
we set $CV^{\ast }(g):= CV(g)\setminus \{ \infty \} .$ 
For a rational semigroup $G$, 
we set 
$$ P(G):=
\overline{\bigcup _{g\in G} CV(g)} \ (\subset \CCI ). 
$$ 
This is called the {\bf postcritical set} of $G.$
Furthermore, for a polynomial semigroup $G$,\ we set 
$P^{\ast }(G):= P(G)\setminus \{ \infty \} .$ This is 
called the {\bf planar postcritical set}
(or {\bf finite postcritical set}) 
 of $G.$
We say that a polynomial semigroup $G$ is 
{\bf postcritically bounded} if 
$P^{\ast }(G)$ is bounded in $\CC .$ 
\end{df}
\begin{rem}
\label{pcbrem}
Let $G$ be a rational semigroup 
generated by a family $\Lambda $ of rational maps. 
Then, we have that 
$P(G)=\overline{\cup _{g\in G\cup \{ Id\} }\ g(\cup _{h\in \Lambda }CV(h))}$, 
where Id denotes the identity map on $\CCI $,  
and that $g(P(G))\subset P(G)$ for each $g\in G.$  
Using this formula, one can understand how the set 
$P(G)$ (resp. $P^{\ast }(G)$) spreads in $\CCI $ (resp. $\CC $). 
In fact, in Section~\ref{Const}, using the above formula, 
we present a way to construct examples of postcritically bounded 
polynomial semigroups (with some additional properties). Moreover, 
from the above formula, one may, in the finitely generated case, 
use a computer to see if a polynomial semigroup $G$ is postcritically bounded much in the same way 
as one verifies the boundedness of the critical orbit for the maps $f_{c}(z)=z^{2}+c.$   
\end{rem}
\begin{ex}
\label{realpcbex}
Let 
$\Lambda := \{ h(z)=cz^{a}(1-z)^{b}\mid 
a,b\in \NN  ,\ c>0,\  
c(\frac{a}{a+b})^{a}(\frac{b}{a+b})^{b}$ $\leq 1\} $ 
and let $G$ be the polynomial semigroup generated by 
$\Lambda .$ 
Since for each $h\in \Lambda $, 
$h([0,1])\subset [0,1]$ and 
$CV^{\ast }(h)\subset [0,1]$, 
it follows that each subsemigroup $H$ of $G$ is postcritically 
bounded. 
\end{ex}
\begin{rem}
\label{pcbound}
It is well-known that for a polynomial $g$ with 
$\deg (g)\geq 2$, 
$P^{\ast }(\langle g\rangle )$ is bounded in $\CC $ if and only if 
$J(g)$ is connected (\cite[Theorem 9.5]{M}).
\end{rem}
As mentioned in Remark~\ref{pcbound}, 
 the planar postcritical set is one 
piece of important information 
regarding the dynamics of polynomials. 
    
When investigating the dynamics of polynomial semigroups, 
it is natural for us to 
discuss the relationship between 
  the planar postcritical set and the 
Julia set.
The first question in this 
regard 
is: 
``Let $G$ be a polynomial semigroup such that each 
element $g\in G$ is of degree at least two.
Is $J(G)$ necessarily connected when $P^{\ast }(G)$ is 
bounded in $\CC $?'' 
The answer is {\bf NO.} In fact, in \cite{SY, SdpbpI, SdpbpIII, SS, Splms, Ssugexp}, 
we find many examples of postcritically bounded polynomial semigroups $G$ 
with disconnected Julia set such that for each $g\in G$, $\deg (g)\geq 2.$ 
Thus, it is natural to ask the following 
questions. 
\begin{prob}
(1) What properties does $J(G)$ have 
if $P^{\ast }(G) $ is bounded in $\CC $ 
and $J(G)$ is disconnected?    
(2) Can we classify postcritically bounded polynomial semigroups? 
\end{prob} 
 Applying the results in \cite{SdpbpI,SdpbpIII},  
 we investigate the dynamics of 
every sequence, or fiberwise dynamics of the skew product 
associated with the generator system (cf. Section \ref{fibsec}). 
Moreover, we investigate 
the random dynamics of polynomials 
acting on 
the Riemann 
sphere. 
Let us consider a polynomial semigroup $G$ generated by a compact 
family $\G $ of polynomials. For each sequence 
$\g =(\g _{1},\g _{2},\g _{3},\ldots )\in \GN $, 
we examine the dynamics along the sequence $\g $, 
that is, the dynamics of the family of maps 
$\{ \g _{n}\circ \cdots \circ \g _{1}\} _{n=1}^{\infty }$. 
We note that this corresponds to the fiberwise dynamics of the skew 
product (see Section \ref{fibsec}) associated with the generator 
system $\G .$ 
We show that 
if $G$ is postcritically bounded, $J(G)$ is disconnected, 
and $G$ is generated by a compact family $\G $ of 
polynomials,  
then, for almost every sequence $\g \in \GN $, 
there exists exactly one bounded component $U_{\g }$ 
of 
the Fatou set 
of $\g $, the Julia set of $\g $ has Lebesgue measure zero, 
there exists no non-constant limit function in $U_{\g }$ for the 
sequence $\g $, 
and for any point $z\in U_{\g }$  
the orbit of $z$ along $\g $ 
tends to the interior of the smallest filled-in Julia set 
$\hat{K}(G)$ (see Definition~\ref{d:smfj}) of $G$ (cf. Theorem~\ref{mainth3},  
Corollary~\ref{rancor1}). 
 Moreover, using uniform fiberwise quasiconformal surgery 
 (\cite{SdpbpIII}), 
 we find 
sub-skew products $\overline{f}$ 
 such that $\overline{f}$ is hyperbolic (see Definition~\ref{d:hypskew}) and 
 such that every fiberwise Julia 
set 
of $\overline{f}$ is a $K$-quasicircle, 
 where $K$ is a constant not depending 
 on the fibers (cf. Theorem~\ref{mainth3}, statement \ref{mainth3-3}). 
Reusing 
the uniform fiberwise quasiconformal surgery,
we show that if $G$ is a postcritically bounded polynomial semigroup 
with disconnected Julia set, then for any non-empty 
open subset $V$ 
of $J(G)$, 
there exists a 
$2$-generator subsemigroup $H$ of $G$ such that 
$J(H)$ is the disjoint union of a ``Cantor family of quasicircles" 
(a family of quasicircles parameterized by a Cantor set) with uniform 
distortion, and such that $J(H)\cap V\neq \emptyset $ 
(cf. Theorem~\ref{cantorqc}). 
Note that the uniform fiberwise quasiconformal surgery 
is based on solving uncountably many Beltrami  
equations.

 We also investigate (semi-)hyperbolic (see Definition~\ref{d:hypsh}), postcritically bounded, 
 polynomial semigroups generated by a compact family $\G $ of 
 polynomials.   
Let $G$ be such a semigroup with disconnected Julia set, 
and suppose that there exists an element $g\in G$ such that 
$J(g)$ is not a Jordan curve. Then, 
we give a (concrete) sufficient condition for a sequence $\g \in \GN $ to 
give rise to 
the following situation ($\ast $): 
the Julia set of $\g $ is a 
Jordan curve but not a quasicircle, the basin of infinity $A_{\g }$ is a 
John domain, and the bounded component $U_{\g }$ of 
the Fatou set is not a John domain (cf. Theorem~\ref{mainthjbnq}, Corollary~\ref{c:rancor2sh}).
From this result, we show that 
for almost 
every sequence $\g \in \GN $, situation $(\ast )$ holds. 
In fact, in this paper, 
under the above assumption, we find a set ${\cal A}$ of $\g $ with $(\ast )$ which is much larger than a set 
${\cal U}$ of $\g $ with $(\ast )$ given in \cite{SdpbpIII}. 
Moreover, we classify hyperbolic two-generator postcritically bounded polynomial semigroups $G$ with disconnected Julia set 
and we also completely classify the fiberwise Julia sets $J_{\g }$ in terms of the information of $\g $ (Theorem~\ref{t:tghyp}).     
Note that situation $(\ast )$ cannot hold in the usual iteration dynamics of a single polynomial map $g$ with $\deg (g)\geq 2$ 
(Remark~\ref{r:jbnq}). 

The key to 
investigating the 
dynamics of 
postcritically bounded polynomial semigroups is 
the density of repelling fixed points in the Julia set (\cite{HM1,GR}), 
which 
can be shown by 
an application of 
the Ahlfors five island theorem, and the lower semi-continuity of 
$\g \mapsto J_{\g }$ (\cite{J}), which is a consequence of potential theory.  
Moreover, one of the keys to 
investigating the 
fiberwise dynamics of 
skew products is, the observation of non-constant limit functions
(cf. Lemma~\ref{nclimlem} and \cite{S1}). 
 The key to 
investigating the 
dynamics of semi-hyperbolic polynomial semigroups is, 
the continuity of the map $\g \mapsto J_{\g }$ 
(this is highly nontrivial; see \cite{S1}) 
and the 
Johnness of the basin $A_{\g }$ of infinity (cf. \cite{S4}). 
Note that the continuity of the map $\g \mapsto J_{\g }$ 
does not hold in general, if we do not assume semi-hyperbolicity. 
 Moreover, one of the 
original aspects 
of this paper 
is the idea of 
 ``combining both 
the theory of rational semigroups and 
 that of random complex dynamics". It is quite natural to 
investigate both fields simultaneously. However, 
no study (except the works of the author of this paper)
thus far has done so.

 Furthermore, in Section~\ref{Const} and \cite{SdpbpI,SdpbpIII}, we 
provide 
a way of constructing examples of 
 postcritically bounded polynomial semigroups 
 with 
some additional properties (disconnectedness of the Julia set, 
semi-hyperbolicity, hyperbolicity, etc.) 
(cf. Proposition~\ref{Constprop}, \cite{SdpbpI,SdpbpIII}). 
For example, by Proposition~\ref{Constprop}, 
there exists a $2$-generator 
postcritically bounded polynomial semigroup $G=\langle h_{1},h_{2}\rangle $ 
with disconnected 
Julia set such that $h_{1}$ has a Siegel disk.
        
As we see in Example~\ref{realpcbex},  Section~\ref{Const}, and \cite{SdpbpI, SdpbpIII},  
it is not difficult to construct many examples for which   
we can verify the hypothesis ``postcritically 
bounded'', 
so the class of postcritically bounded polynomial semigroups is 
very wide.  

 Throughout the paper, we will see many new phenomena in polynomial 
 semigroups or random dynamics of polynomials that do not occur in 
 the usual dynamics of polynomials. Moreover, these new phenomena are 
 systematically investigated.
 
 In Section~\ref{Main}, we present the main results 
 of this paper. We give some tools in Section~\ref{Tools}. 
 The proofs of the main results are given in Section~\ref{Proofs}.

There are many applications of the results of postcritically 
bounded polynomial semigroups in many directions. 
 In the sequel \cite{Splms, S9, Scpsb, S8}, we 
 investigate 
the Markov process on $\CCI $ associated with 
the random dynamics of polynomials and 
we  consider the probability $T_{\infty }(z)$ 
of tending to $\infty \in \CCI $ 
starting with the initial value $z\in \CCI .$ 
Applying many results of \cite{SdpbpI},  
it will be shown in \cite{S8} that 
if the associated polynomial semigroup $G$ 
is postcritically bounded and the Julia set is 
disconnected, then 
the chaos of the averaged system disappears due to the cooperation of generators (cooperation principle), 
and 
the function $T_{\infty }$ defined on $\CCI $ 
has many interesting properties which are 
similar to those of the devil's staircase (the Cantor function). 
Such  ``singular functions on the 
complex plane'' appear very naturally in 
random dynamics of polynomials and the 
results of this paper (for example, 
the results on the space of all connected 
components of a Julia set) are the keys to 
investigating them. 
(The above results have been announced in \cite{Splms, S9, S10, Ssugexp}.) 


In \cite{SdpbpI}, we find many fundamental and useful results on the connected components of Julia sets of postcritically bounded polynomial semigroups.  
In \cite{SdpbpIII}, we classify (semi-)hyperbolic, postcritically bounded, compactly generated polynomial semigroups. 
 In the sequel \cite{SS}, we give some 
further results on postcritically 
 bounded polynomial semigroups, by using many results in \cite{SdpbpI,SdpbpIII}, and this paper. 
 Moreover, in the sequel \cite{S7}, 
 we define a new kind of cohomology theory, in order to 
 investigate the action of finitely generated semigroups (iterated function systems), and 
we apply it to the study of 
the dynamics of postcritically bounded polynomial semigroups.  

\section{Preliminaries}  
\label{Preliminaries}
In this section we give some basic notations and definitions, and 
we present some results in \cite{SdpbpI,SdpbpIII}, which we need to state
 the main results of this paper. 
\begin{df}
We set 
Rat : $=\{ h:\CCI \rightarrow \CCI \mid 
h \mbox { is a non-constant rational map}\} $
endowed with distance $\eta $ defined as 
$\eta (f,g):= \sup _{z\in \CCI }d(f(z),g(z))$, 
where $d$ denotes the spherical distance on $\CCI .$ 
We set 
Poly :$=\{ h:\CCI \rightarrow \CCI 
\mid h \mbox{ is a non-constant polynomial}\} $ endowed with 
the relative topology from Rat.   
Moreover, we set 
Poly$_{\deg \geq 2}
:= \{ g\in \mbox{Poly}\mid \deg (g)\geq 2\} $ 
endowed with the relative topology from 
Rat.  
\end{df}
\begin{rem}
Let $d\geq 1$,  $\{ p_{n}\} _{n\in \NN }$ be a 
sequence of polynomials of degree $d$, 
and $p$ be a polynomial.  
Then $p_{n}\rightarrow p$ in Poly if and only if $p$ is of degree $d$ and  
the coefficients of $p_{n}$ converge appropriately.  
\end{rem}
\begin{df} 
Let ${\cal G} $ be the set of all postcritically bounded polynomial semigroups 
$G$ such that 
 each element of $G$ is of degree 
at least two.    
Furthermore, we set 
${\cal G}_{con}=
\{ G\in {\cal G}\mid 
J(G)\mbox{ is connected}\} $ and 
${\cal G}_{dis}=
\{ G\in {\cal G}\mid 
J(G)\mbox{ is disconnected}\}.$ 
\end{df}  
\begin{df} 
For a polynomial semigroup $G$,\ 
we denote by 
${\cal J}={\cal J}_{G}$ the set of all 
connected components $J$ 
of $J(G)$ such that $J\subset \CC .$   
Moreover, we denote by 
$\hat{{\cal J}}=\hat{{\cal J}}_{G}$ the set of all connected components 
of $J(G).$ 
\end{df} 
\begin{rem}
\label{hatjcptrem}
If a polynomial semigroup $G$ is generated by a compact set 
in Poly$_{\deg \geq 2}$, then 
$\infty \in F(G)$ and thus ${\cal J}=\hat{{\cal J}}.$ 
\end{rem}
\begin{df}[\cite{SdpbpI}] 
For any connected sets $K_{1}$ and 
$K_{2}$ in $\CC ,\ $  ``$K_{1}\leq K_{2}$'' indicates that 
$K_{1}=K_{2}$, or $K_{1}$ is included in 
a bounded component of $\CC \setminus K_{2}.$ Furthermore, 
``$K_{1}<K_{2}$'' indicates $K_{1}\leq K_{2}$ 
and $K_{1}\neq K_{2}.$ Note that 
``$ \leq $'' is a partial order in 
the space of all non-empty compact connected 
sets in $\CC .$ This ``$\leq $" is called 
the {\bf surrounding order.} 
\end{df}
\begin{df}[\cite{SdpbpI}] 
\label{d:smfj}
 For a polynomial semigroup $G$,\ we set 
$$ \hat{K}(G):=\{ z\in \CC 
\mid \bigcup _{g\in G}\{ g(z)\} \mbox{ is bounded in }\CC \} $$ 
and call $\hat{K}(G)$ the {\bf smallest filled-in Julia set} of 
$G.$ 
For a polynomial $g$, we set $K(g):= \hat{K}(\langle g\rangle ).$ 
For a set $A\subset \CCI $, we denote by int$(A)$ the set of 
all  
interior points of $A.$ 
For a polynomial semigroup $G$ with 
$\infty \in F(G)$, we denote by 
$F_{\infty }(G)$ the connected component of $F(G)$ containing 
$\infty .$ Moreover, for a polynomial $g$ with 
$\deg (g)\geq 2$, we set $F_{\infty }(g):= 
F_{\infty }(\langle g\rangle ).$ 
\end{df}

The following three results in \cite{SdpbpI} are needed to state the main result in this paper. 

\begin{thm}[\cite{SdpbpI}] 
\label{mainth1}
Let $G\in {\cal G}$ (possibly generated by a non-compact 
family). 
Then we have all of the following. 
\begin{enumerate}
\item \label{mainth1-1}
We have $({\cal J},\ \leq )$ is totally ordered. 
\item \label{mainth1-2}
Each connected component of 
$F(G)$ is either simply or doubly connected. 
\item \label{mainth1-3}
For any $g\in G$ and any connected component 
$J$ of $J(G)$,\ we have that  
$g^{-1}(J)$ is connected. 
Let $g^{\ast }(J)$ be the connected component of 
$J(G)$ containing $g^{-1}(J).$ 
If $J\in {\cal J}$, then 
$g^{\ast }(J)\in {\cal J}.$    
If $J_{1},J_{2}\in {\cal J} $ and $J_{1}\leq J_{2},\ $ then 
$g^{-1}(J_{1})\leq g^{-1}(J_{2})$ 
and $g^{\ast }(J_{1})\leq g^{\ast }(J_{2}).$
\end{enumerate} 
\end{thm}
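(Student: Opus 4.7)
I plan to prove the three parts in order, with part (1) serving as the structural foundation.

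For part (1), the key input is Remark~\ref{pcbound}: every single-generator subsemigroup $\langle g\rangle \subset G$ is postcritically bounded, so $J(g)$ is connected and $K(g)$ is a connected, full compact subset of $\CC$ containing $\hat{K}(G)\supset P^{\ast }(G)$. Combining this with the standard identity $J(G)=\overline{\bigcup _{g\in G}J(g)}$ (from density of repelling fixed points in $J(G)$, noted in the introduction), each component of $J(G)\cap \CC$ is approximated by connected $J(g)$'s. For two distinct components $J_{1},J_{2}\in {\cal J}$, pick $g_{i}\in G$ with $J(g_{i})\subset J_{i}$. Both $K(g_{1})$ and $K(g_{2})$ are full connected compacta containing $\hat{K}(G)$, so $K(g_{1})\cap K(g_{2})\neq \emptyset $, while $\partial K(g_{i})=J(g_{i})$ lie in disjoint components of $J(G)$, so the boundaries are disjoint. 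Two full compact connected sets in $\CC$ with nonempty intersection and disjoint boundaries must be nested; hence $K(g_{1})\leq K(g_{2})$ or the reverse, and the same ordering propagates to $J_{1}$ and $J_{2}$ via density.

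For part (2), suppose some component $A$ of $F(G)$ has connectivity $n\geq 3$. For any $g\in G$, $g|_{A}\colon A\to g(A)=:A'$ is a proper holomorphic branched covering of some degree $d_{g}\geq 1$, and the Riemann-Hurwitz formula gives $2-n=d_{g}(2-n')-B_{g}$, where $n'$ is the connectivity of $A'$ and $B_{g}$ is the total branching of $g|_{A}$. Every critical point of $g$ in $A$ maps into $P^{\ast }(G)$, a fixed bounded set. By considering compositions $g_{k}\circ \cdots \circ g_{1}\in G$ and summing branching contributions while $P^{\ast }(G)$ remains bounded, one derives a contradiction with $n\geq 3$.

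For part (3), forward invariance $g(F(G))\subset F(G)$ (normality is preserved by post-composition) gives $g^{-1}(J(G))\subset J(G)$, so each component of $g^{-1}(J)$ lies in some component of $J(G)$. To prove $g^{-1}(J)$ is connected, assume it has components $C_{1},\ldots ,C_{k}$ ($k\geq 2$) contained in distinct Julia components $J_{(1)}<\cdots <J_{(k)}$ totally ordered by part (1). The Fatou component $U$ separating $J_{(k)}$ from $J_{(k-1)}$ would have to map under $g$ onto a Fatou component adjacent to $J$ with connectivity $\geq 3$ (as several $C_{i}$'s pull $J$ back from different sides of $U$), contradicting part (2). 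Hence $k=1$. That $g^{\ast }(J)\in {\cal J}$ when $J\in {\cal J}$ follows because $g^{-1}(\infty )=\{\infty \}$ prevents $\infty $ from lying in $g^{\ast }(J)$. Order preservation: if $J_{1}\leq J_{2}$, then $J_{1}$ lies in a bounded component $D$ of $\CC \setminus J_{2}$; since $g$ is a polynomial, $g^{-1}(D)$ is open and bounded in $\CC$, and the connected set $g^{-1}(J_{1})$ sits in a bounded component of $\CC \setminus g^{-1}(J_{2})$, yielding $g^{-1}(J_{1})\leq g^{-1}(J_{2})$ and therefore $g^{\ast }(J_{1})\leq g^{\ast }(J_{2})$.

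The main obstacle I expect is part (2): the Riemann-Hurwitz bookkeeping across iterations, showing that $n\geq 3$ cannot coexist with the fixed bounded set $P^{\ast }(G)$ absorbing all critical-value contributions, is delicate and probably requires a limiting argument over cleverly chosen compositions of generators. Part (3)'s connectedness step similarly hinges on careful topological accounting of Fatou components and leans essentially on part (2).
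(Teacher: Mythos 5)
You should note at the outset that this paper does not prove Theorem~\ref{mainth1} at all: it is quoted verbatim from \cite{SdpbpI} as background, so there is no in-paper proof to compare with, and your proposal has to stand on its own. Its first step is sound: since $P^{\ast }(G)$ is non-empty, bounded and forward invariant, one has $P^{\ast }(G)\subset \hat{K}(G)\subset K(g)$ for every $g\in G$, each $J(g)$ is connected, and your nesting lemma (two full compact connected sets with non-empty intersection and disjoint boundaries are nested) is correct; this gives comparability of $J(g)$ and $J(h)$ for \emph{elements} $g,h\in G$ with disjoint Julia sets, which is indeed the natural starting lemma. The gap is in passing from elements to arbitrary components of $J(G)$. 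You assume that for distinct $J_{1},J_{2}\in {\cal J}$ one can choose $g_{i}\in G$ with $J(g_{i})\subset J_{i}$; this is false in general. A two-generator semigroup is countable, while (as in Theorem~\ref{cantorqc}) $J(H)$ can have uncountably many components $J_{\g }(f)$, $\g \in \G ^{\NN }$, so most components of the Julia set contain no $J(g)$ whatsoever. The fallback phrase ``the same ordering propagates to $J_{1}$ and $J_{2}$ via density'' is precisely the hard content of statement 1: the element Julia sets accumulating on a point of $J_{1}$ may be large connected sets that also surround $J_{2}$ (and vice versa), so comparability of $J(g_{n})$ and $J(h_{n})$ does not transfer to comparability of $J_{1}$ and $J_{2}$ without a genuine further argument; in \cite{SdpbpI} this upgrade is where the real work lies. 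Since your treatments of statements 2 and 3 lean on statement 1, this gap is central.

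There are also problems in the other two parts. For statement 2, if $A$ is a component of $F(G)$ and $g\in G$, then $g(A)$ need not be a whole Fatou component and $g|_{A}$ need not be a proper map onto the component $A'$ containing $g(A)$; properness onto Fatou components is a feature of single-map iteration, so the Riemann--Hurwitz identity you write down is not available, and ``summing branching contributions \ldots one derives a contradiction'' is not an argument. The efficient route is the opposite of yours: deduce 2 \emph{from} 1, since a Fatou component with three or more complementary components would yield two components of $J(G)$ lying in disjoint bounded complementary pieces, hence mutually non-comparable, contradicting the total ordering; no Riemann--Hurwitz bookkeeping is needed. For statement 3, distinct components of $g^{-1}(J)$ need not lie in distinct components of $J(G)$, so your separation setup is unjustified (the standard device here is rather the classical lemma that $g^{-1}(K)$ is connected when $K$ is a full connected compact set containing $CV^{\ast }(g)$, applied to the filled-in set of $J$, which requires knowing that this filled-in set contains $P^{\ast }(G)$ --- again part of the structure theory you have not established); and $\infty \notin g^{-1}(J)$ does not prevent $\infty \in g^{\ast }(J)$, since $g^{\ast }(J)$ is the possibly much larger component of $J(G)$ containing $g^{-1}(J)$. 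Only your final order-preservation computation (using that $g^{-1}(D)$ is bounded with $\partial g^{-1}(D)\subset g^{-1}(J_{2})$) is essentially complete, and even it presupposes the connectedness of $g^{-1}(J_{1})$ and the total ordering that remain unproved.
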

\begin{thm}[\cite{SdpbpI}] 
\label{mainth2} 
Let $G\in {\cal G}_{dis}$ (possibly generated by a non-compact 
family). 
Then we have all of the following. 
\begin{enumerate}
\item \label{mainth2-1}
We have $\infty \in F(G)$. Thus ${\cal J}=\hat{{\cal J}}.$ 
\item \label{mainth2-2}
The component  
$F_{\infty }(G)$ of $F(G)$ containing $\infty $ 
is simply connected. 
Furthermore,\ 
the element $J_{\max }=J_{\max}(G)\in {\cal J}$  
containing $\partial F_{\infty }(G)$ 
is the unique element of ${\cal J}$ satisfying that 
$J\leq J_{\max }$ for each 
$J\in {\cal J}.$  
\item 
\label{mainth2-3}
There exists a unique element 
$J_{\min }=J_{\min }(G)\in {\cal J}$ such that 
$J_{\min }\leq J$ for 
  each element $J\in {\cal J}. $
\item 
\label{mainth2-4}
We have that  $\mbox{{\em int}}(\hat{K}(G))\neq 
\emptyset .$ 
\end{enumerate}
\end{thm}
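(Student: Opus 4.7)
I would establish the four statements in the order (1), (2), (4), (3); each builds on its predecessors.

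For (1), the boundedness of $P^*(G)$ forces $\infty$ to be uniformly superattracting for the whole semigroup. With $R>0$ so that $P^*(G)\subset D(0,R)$, every $g\in G$ has a B\"ottcher coordinate at $\infty$ whose univalent range is controlled by the location of its finite critical values. I would assemble these into a common subharmonic majorant, built from the Green's functions of the individual basins $F_\infty(g)$ and uniformly bounded on a fixed punctured neighborhood $V$ of $\infty$, and then extract normality of $G$ on $V$ via Montel's theorem. This gives $\infty\in F(G)$, whence $\mathcal{J}=\hat{\mathcal{J}}$.

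For (2), observe that each $g\in G$ has local degree $d=\deg g$ at its superattracting fixed point $\infty$, so $g|_{F_\infty(G)}\colon F_\infty(G)\to F_\infty(G)$ is a proper self-map of full degree $d$ with ramification $d-1$ at $\infty$; in particular $g^{-1}(F_\infty(G))=F_\infty(G)$. By Theorem~\ref{mainth1}(\ref{mainth1-2}) $F_\infty(G)$ is simply or doubly connected, and Riemann--Hurwitz applied to $g|_{F_\infty(G)}$ rules out the doubly-connected case, where $\chi=0$ would force total ramification to vanish, contradicting $d-1\ge 1$ at $\infty$. Hence $F_\infty(G)$ is simply connected, its boundary is a continuum contained in a unique $J_{\max}\in\mathcal{J}$, and the alternative $J>J_{\max}$ for some other $J\in\mathcal{J}$ would bury $\partial F_\infty(G)$ inside a bounded component of $\CC\setminus J$, forcing $F_\infty(G)\ni\infty$ to intersect $J\subset J(G)$, a contradiction.

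For (4) and (3), the full invariance $g^{-1}(F_\infty(G))=F_\infty(G)$ established in (2) implies that every Fatou component $W\neq F_\infty(G)$ satisfies $g(W)\cap F_\infty(G)=\emptyset$ for each $g\in G$. Consequently the orbit $G(W)$ stays in the compact set $\CC\setminus F_\infty(G)$, giving $W\subset\hat{K}(G)$; since disconnectedness of $J(G)$ together with (2) produces a bounded component $B$ of $\CC\setminus J_{\max}$ strictly containing some $J<J_{\max}$, and hence a bounded Fatou component $W\subset B$, this proves (4). For (3), any strictly decreasing chain $J_1>J_2>\cdots$ in $\mathcal{J}$ has, by Hausdorff-compactness on the subcontinua of $\CC$, a limit $J_\infty$ contained in a single component $J_*\in\mathcal{J}$ with $J_*<J_n$ for every $n$. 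A Zorn-style argument on the totally ordered set $\mathcal{J}$ --- where every chain now has a lower bound by this construction --- produces the desired minimum $J_{\min}$. The main obstacle is making (1) rigorous in the non-compactly-generated, unbounded-degree setting, where a careful subharmonic-majorant construction replaces any naive estimates on individual generators.
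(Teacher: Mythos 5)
Since the paper merely quotes this theorem from \cite{SdpbpI} and contains no proof of it, your proposal has to stand on its own; it does not, because two of its central claims are false for polynomial semigroups. First, your argument for (\ref{mainth2-1}) never uses the hypothesis that $J(G)$ is disconnected, yet without it the statement is false in the non-compactly generated setting you explicitly worry about: for $G$ generated by $\{ z\mapsto z^{2}/n \mid n\in \NN \}$ one has $P^{\ast }(G)=\{ 0\} $, but $J(z^{2}/n)=\{ |z|=n\} $, so $\infty \in J(G)$ (this $G$ has connected Julia set, which is why it does not contradict the theorem --- the point is that disconnectedness must enter the proof). Accordingly, the uniform control you want near $\infty $ does not exist: the basin of $\infty $ of $z^{2}/n$ is $\{ |z|>n\} $, and the B\"ottcher/Green's function normalization depends on the coefficients (the capacity of the filled Julia set), not only on the location of the finite critical values, so there is no fixed punctured neighborhood of $\infty $ carrying a common subharmonic majorant for all of $G$. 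Your plan for (\ref{mainth2-1}) would ``prove'' a false statement and cannot be repaired without genuinely exploiting $G\in {\cal G}_{dis}$.

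Second, the invariance on which (\ref{mainth2-2}) and (\ref{mainth2-4}) rest is wrong: for rational semigroups one only has $g(F(G))\subset F(G)$, i.e. $g^{-1}(J(G))\subset J(G)$; complete invariance $g^{-1}(F_{\infty }(G))=F_{\infty }(G)$ and properness of $g|_{F_{\infty }(G)}$ generally fail. For instance, in Example~\ref{jbnqexfirst} the map $h=g_{1}^{2}$ has $J(h)=J_{\min }$, so every point of $J_{\max }$ lies in the basin of infinity of $h$, and a high iterate $h^{n}\in G$ sends part of $J(G)$, and entire annular components of $F(G)$, into $F_{\infty }(G)$. Hence there is no proper full-degree self-map of $F_{\infty }(G)$ to which Riemann--Hurwitz applies in (\ref{mainth2-2}), and the deduction in (\ref{mainth2-4}) that every bounded Fatou component has bounded $G$-orbit and lies in $\hat{K}(G)$ is false --- most bounded components of $F(G)$ are not contained in $\hat{K}(G)$, and exhibiting some nonempty open set whose orbit under the whole semigroup stays bounded is exactly the nontrivial content of (\ref{mainth2-4}). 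Finally, for (\ref{mainth2-3}) your compactness argument only treats countable descending sequences, while Zorn requires a lower bound for every chain (here effectively for the whole totally ordered family ${\cal J}$); this part is fixable, e.g. by intersecting the filled-in sets of all $J\in {\cal J}$ and taking the component of $J(G)$ meeting the boundary of that intersection, but as written it, too, is incomplete.
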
 
For the figures of the Julia sets of semigroups $G\in {\cal G}_{dis}$, 
see Figure~\ref{fig:dcgraph}. 
\begin{prop}[\cite{SdpbpI}] 
\label{bminprop}
Let $G$ be a 
polynomial 
semigroup generated by a compact subset 
$\G $ of {\em Poly}$_{\deg \geq 2}.$ Suppose that $G\in {\cal G}_{dis}.$ Then,    
there exists an element 
$h_{1}\in \G $ with 
 $J(h_{1})\subset J_{\max } $ and 
there exists an element 
$h_{2}\in \G $ with 
$J(h_{2})\subset J_{\min }.$ 
\end{prop}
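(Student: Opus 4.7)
The plan is to combine three ingredients: (i) the backward self-similarity identity $J(G)=\bigcup_{h\in \G}h^{-1}(J(G))$, which is valid for any rational semigroup generated by a compact family (cf.\ \cite{S3,S1}); (ii) the order-preserving action of generators on the totally ordered space $({\cal J},\leq)$ from Theorem~\ref{mainth1}(\ref{mainth1-3}); and (iii) the classical Montel criterion, via which closed backward-invariant sets of size at least three contain $J(h)$. Note that since $G\in{\cal G}_{dis}$, Theorem~\ref{mainth2}(\ref{mainth2-1}) gives ${\cal J}=\hat{{\cal J}}$, so there is no distinction between ``in $\CC$'' components and all components when I apply $h^{\ast}$.

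For the existence of $h_{1}$: I pick any point $z\in J_{\max}$. By backward self-similarity, there is $h\in\G$ with $z\in h^{-1}(J(G))$, so $h(z)$ lies in some component $J'\in{\cal J}$ of $J(G)$. By Theorem~\ref{mainth1}(\ref{mainth1-3}), $h^{-1}(J')$ is connected and lies in the component $h^{\ast}(J')\in{\cal J}$; since this component contains $z\in J_{\max}$, it must equal $J_{\max}$. Now $J'\leq J_{\max}$ and $h^{\ast}$ is monotone, so $J_{\max}=h^{\ast}(J')\leq h^{\ast}(J_{\max})\leq J_{\max}$, forcing $h^{\ast}(J_{\max})=J_{\max}$ and therefore $h^{-1}(J_{\max})\subset J_{\max}$. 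The closed set $J_{\max}$ contains $\partial F_{\infty}(G)$ (Theorem~\ref{mainth2}(\ref{mainth2-2})), which is a nondegenerate continuum, so $\CCI\setminus J_{\max}$ is an open set of $\geq 3$ omitted values that is $h$-forward invariant; Montel gives $\{h^{n}\}$ normal on $\CCI\setminus J_{\max}$, so $J(h)\subset J_{\max}$ and I take $h_{1}:=h$.

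The construction of $h_{2}$ is dual. Pick $z\in J_{\min}$; backward self-similarity gives $h\in\G$ and $J'\in{\cal J}$ with $z\in h^{-1}(J')$, whence $h^{\ast}(J')=J_{\min}$. Since $J'\geq J_{\min}$, monotonicity of $h^{\ast}$ yields $h^{\ast}(J_{\min})\leq h^{\ast}(J')=J_{\min}$, combined with minimality this gives $h^{\ast}(J_{\min})=J_{\min}$ and hence $h^{-1}(J_{\min})\subset J_{\min}$. The same Montel argument then concludes $J(h)\subset J_{\min}$, so I set $h_{2}:=h$. The main obstacle I expect is the Montel step in the $J_{\min}$ case: it requires $J_{\min}$ to contain at least three points, which is not immediate from the statement of Theorem~\ref{mainth2} alone (whereas for $J_{\max}$ nondegeneracy is automatic). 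This is the delicate part of the proof and is presumably supplied by the structural results on components of $J(G)$ developed in \cite{SdpbpI}; alternatively, it can be reduced to showing that a single-point $J_{\min}$ is incompatible with $\mathrm{int}(\hat{K}(G))\neq\emptyset$ together with the surrounding order.
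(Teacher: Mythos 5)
First, a point of comparison: this paper does not actually prove Proposition~\ref{bminprop} --- it is quoted from \cite{SdpbpI} --- so there is no internal proof to measure your argument against. Judged on its own terms, your architecture is sound and economical: backward self-similarity over the compact generating set $\G $ produces, for a given $z\in J_{\max }$ (resp.\ $z\in J_{\min }$), a generator $h$ with $h(z)\in J(G)$; Theorem~\ref{mainth1}-\ref{mainth1-3} then forces $h^{\ast }(J_{\max })=J_{\max }$ (resp.\ $h^{\ast }(J_{\min })=J_{\min }$), hence $h^{-1}(J_{\max })\subset J_{\max }$ (resp.\ $h^{-1}(J_{\min })\subset J_{\min }$); and Montel's theorem upgrades backward invariance of a closed set with at least three points to $J(h)\subset J_{\max }$ (resp.\ $J(h)\subset J_{\min }$). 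The use of Theorem~\ref{mainth2}-\ref{mainth2-1} to identify ${\cal J}$ with $\hat{{\cal J}}$, and the verification that $J_{\max }\supset \partial F_{\infty }(G)$ is a nondegenerate continuum (so that $\sharp J_{\max }\geq 3$), are both correct.

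The one genuine gap is exactly the one you flag yourself: the Montel step for $h_{2}$ needs $\sharp J_{\min }\geq 3$, and nothing you establish rules out $J_{\min }$ being a singleton $\{ p\} $. In that case your argument only yields $h^{-1}(\{ p\} )=\{ p\} $, i.e.\ $h(z)=c(z-p)^{d}+p$ with $p$ a superattracting fixed point of $h$, which is not by itself in contradiction with $p\in J(G)$; and since a perfect set can have one-point components, Theorem~\ref{mainth2} as stated does not exclude this. The gap does close, but only by importing a further structural fact from \cite{SdpbpI} that the present paper invokes repeatedly in its own proofs (cited there as Theorem 2.20.2), namely $\partial \hat{K}(G)\subset J_{\min }$: since $\hat{K}(G)$ is a bounded closed set with nonempty interior by Theorem~\ref{mainth2}-\ref{mainth2-4}, its boundary is infinite, whence $\sharp J_{\min }=\infty $ and Montel applies. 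So your proof is correct modulo quoting (or reproving) that one additional result, which is of the same provenance as the statements you are already allowed to use; as written, however, the $h_{2}$ half is not yet complete.
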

\section{Main results}
\label{Main}
In this section, we present the main results of this paper. 
The proofs of the results are given in Section~\ref{Proofs}. 
\subsection{Fiberwise dynamics and Julia sets}
\label{fibsec}
We present some results on the fiberwise 
dynamics of the skew product related to a postcritically bounded 
polynomial semigroup with disconnected Julia set.
 In particular, using the uniform 
 fiberwise quasiconformal surgery on a 
 fiber bundle, 
 we show the existence 
 of  families of quasicircles with uniform distortion 
parameterized 
 by the Cantor set densely inside  
 the Julia set of such a semigroup. The proofs 
 are given in Section~\ref{pffibsec}. 
\begin{df}[\cite{S1,S4}]
\label{d:sp}
\ 
\begin{enumerate}
\item  Let $X$ be a compact metric space, 
$g:X\rightarrow X$ a continuous 
map, and $f:X\times \CCI \rightarrow 
X\times \CCI $ a continuous map. 
We say that $f$ is a rational skew 
product (or fibered rational map on the 
trivial bundle $X\times \CCI $) 
over $g:X\rightarrow X$, if 
$\pi \circ f=g\circ \pi $ where 
$\pi :X\times \CCI \rightarrow X$ denotes 
the canonical projection,\ and 
if, for each $x\in X$, the 
restriction 
$f_{x}:= f|_{\pi ^{-1}(\{ x\} )}:\pi ^{-1}(\{ x\}) \rightarrow 
\pi ^{-1}(\{ g(x)\} )$ of $f$ is a 
non-constant rational map,\ 
under the canonical identification 
$\pi ^{-1}(\{ x'\} )\cong \CCI $ for each 
$x'\in X.$ Let $d(x)=\deg (f_{x})$, for each 
$x\in X.$
Let $f_{x,n}$ be 
the rational map 
defined by: $f_{x,n}(y)=\pi _{\CCI }(f^{n}(x,y))$, 
for each $n\in \NN ,x\in X$ and $y\in \CCI $,   
where    
$\pi _{\CCI }:X\times \CCI 
\rightarrow  \CCI $ is the projection map.

  Moreover, if $f_{x,1}$ is a polynomial for each $x\in X$, 
 then we say that $f:X\times \CCI \rightarrow X\times \CCI $ is a 
 polynomial skew product over $g:X\rightarrow X.$  
\item  
Let $\G $ be a compact subset of Rat. 
We set  $\GN := \{ \g =(\g _{1}, \g_{2},\ldots )\mid \forall j,\g _{j}\in \G\} $ endowed with the product topology. This is a compact metric space.
Let  
$\sigma :\GN \rightarrow \GN $ be the shift map, which is defined by  
$\sigma (\g _{1},\g _{2},\ldots ):=(\g _{2},\g _{3},\ldots )
.$ Moreover,\ 
we define a map
$f:\GN \times \CCI \rightarrow 
\GN \times \CCI $ by:
$(\g ,y) \mapsto (\sigma (\g ),\g _{1}(y)),\ $
where $\g =(\g_{1},\g_{2},\ldots ).$
This is called 
{\bf the skew product associated with 
the family $\G $ of rational maps.} 
Note that $f_{\g ,n}(y)=\g _{n}\circ \cdots \circ \g _{1}(y).$ 
\end{enumerate}
\end{df}
\noindent \begin{rem}\ 
Regarding item 1 of Definition~\ref{d:sp}, 
the map $f^{n}|_{\pi ^{-1}(\{ x\} )}:\pi ^{-1}(\{ x\} )\rightarrow \pi ^{-1}(\{ g^{n}(x)\} )$ 
is equal to the rational map $f_{g^{n-1}(x)}\circ \cdots \circ f_{x}$ under the canonical 
identification $\pi ^{-1}(\{ x'\} )\cong \CCI $ for each $x'\in X.$ 
Thus, if we consider the dynamics of $f$, then we can investigate the dynamics of all sequences 
generated by the family $\{ f_{x}\} _{x\in X}$ and the map $g.$ 
\end{rem}
\begin{rem}
Let $f:X\times \CCI \rightarrow X\times \CCI  $ 
be a rational skew product over 
$g:X\rightarrow X.$ Then the function 
$x\mapsto d(x)$ is continuous in $X.$ 
For, since $f$ is continuous, the map $x\mapsto f_{x}\in \mbox{Rat}$ is continuous. 
Moreover, the function $g\in \mbox{Rat}\mapsto \deg (g)\in \RR $ is continuous (\cite[Theorem 2.8.2]{Be}). 
Thus, $x\mapsto d(x)$ is continuous.    
\end{rem}
\begin{df}[\cite{S1, S4}]
Let $f:X\times \CCI 
\rightarrow X\times \CCI $ be a 
rational skew product over 
$g:X\rightarrow X.$ Then, 
for each $x\in X$ and $n\in \NN $, we set 
$f_{x}^{n}:=
f^{n}|_{\pi ^{-1}(\{ x\} )}:\pi ^{-1}(\{ x\} )\rightarrow 
\pi ^{-1}(\{ g ^{n}(x)\} )\subset X\times \CCI .$
For each $x\in X$,  
we denote by $F_{x}(f)$ 
the set of points 
$y\in \CCI $ which has a neighborhood $U$ 
in $\CCI $ such that 
$\{ f_{x,n}:U\rightarrow 
\CCI \} _{n\in \NN }$
is normal. Moreover, we set 
$F^{x}(f):= \{ x\} \times F_{x}(f)\ (\subset X\times \CCI ).$  
We set 
$J_{x}(f):=\CCI \setminus 
F_{x}(f).$ Moreover, we set 
$J^{x}(f):= \{ x\} \times J_{x}(f)$ $ (\subset X\times \CCI ).$ 
These sets $J^{x}(f)$ and $J_{x}(f)$ are called the 
{\bf fiberwise Julia sets}.
Moreover, we set 
$\tilde{J}(f):=
\overline {\bigcup _{x\in X}J^{x}
(f)}$, where the closure is taken in the product space $X\times \CCI .$
For each $x\in X$, we set 
$\hat{J}^{x}(f):=\pi ^{-1}(\{ x\} )\cap \tilde{J}(f).$ 
Moreover, we set $\hat{J}_{x}(f):= 
\pc (\hat{J}^{x}(f)).$ 
We set $\tilde{F}(f):=(X\times \CCI)\setminus 
\tilde{J}(f).$
\end{df}
\begin{rem}[\cite{S1, S4}]\ 
\begin{itemize}
\item[(1)]
We have 
$f_{x}^{-1}(J_{g(x)})=J_{x}$, 
$f_{x}(J_{x})=J_{g(x)}$, $f(\tilde{J}(f))\subset \tilde{J}(f)$, 
$\hat{J}^{x}(f)\supset J^{x}(f)$ and 
$\hat{J}_{x}(f)\supset J_{x}(f).$ 
However, for the last one, 
strict containment can occur. 
For example, let $h_{1}$ be a polynomial having a Siegel disk 
with center $z_{1}\in \CC .$ 
Let $h_{2}$ be a polynomial such that 
$z_{1}$ is a repelling fixed point of $h_{2}.$ 
Let $\G =\{ h_{1},h_{2}\} .$  
Let $f:\G \times \CCI \rightarrow \G \times \CCI $ be 
the skew product associated with the family $\G .$ 
Let $x =(h_{1},h_{1},h_{1},\ldots )\in \GN .$ 
Then, $(x,z_{1})\in \hat{J}^{x}(f)\setminus  J^{x}(f)$ and 
$z_{1}\in \hat{J}_{x}(f)\setminus J_{x}(f).$ 

 If $g$ is an open and surjective map (e.g. the shift map $\sigma :\G^{\NN }\rightarrow \G ^{\NN }$), 
 then $f^{-1}(\tilde{J}(f))=f(\tilde{J}(f))=\tilde{J}(f)$ (\cite[Lemma 2.4]{S1}). 
For more details, see \cite{S1,S4}.  
\item[(2)] 
Let $\Gamma $ be a compact subset of Rat and let $f:\Gamma ^{\NN }\times \CCI 
\rightarrow \Gamma ^{\NN }\times \CCI $ be the skew product associated with $\Gamma .$ 
Let $G$ be the rational semigroup generated by $\Gamma $ (thus $G=\{ g_{i_{1}}\circ \cdots \circ g_{i_{n}}\mid 
n\in \NN , \forall g_{i_{j}}\in \Gamma \} $). 
If $\sharp (J(G))\geq 3 $, then $\pi _{\CCI }(\tilde{J}(f))=J(G)$ (\cite[Lemma 3.5]{SdpbpIII}). 
From this result, we can apply the results of the dynamics of $f$ to the dynamics of $G.$  
\end{itemize}
\end{rem}
\begin{df}[\cite{SdpbpIII}]
Let $f:X\times \CCI \rightarrow X\times \CCI $ be a 
polynomial skew product over $g:X\rightarrow X.$ 
Then for each $x\in X$, we set
$K_{x}(f):=
\{ y\in \CCI  \mid 
\{ f_{x,n}(y)\} _{n\in \NN }
\mbox{ is bounded } $ in $\CC \}  $, 
 and 
 $A_{x}(f):=\{ y\in \CCI 
\mid f_{x,n}(y)\rightarrow \infty 
,\ n\rightarrow \infty \} .$
Moreover, we set 
$K^{x}(f):= \{ x\} \times K_{x}(f) \ (\subset 
X\times \CCI ) $ and 
$A^{x}(f):= \{ x\} \times A_{x}(f)\ (\subset X\times \CCI ).$ 
\end{df}

\begin{df}[\cite{SdpbpI}] 
Let $G$ 
be a 
polynomial semigroup generated by a 
subset $\G $ of Poly$_{\deg \geq 2}.$ 
Suppose 
$G\in {\cal G}_{dis}.$ Then 
we set 
$ \G_{\min }:=\{ h\in \G \mid 
J(h)\subset J_{\min }\} ,$
where $J_{\min }$ denotes the 
unique minimal element in $({\cal J},\ \leq )$ 
in statement \ref{mainth2-3} of Theorem~\ref{mainth2}.  
Furthermore, if $\G _{\min }\neq \emptyset $, 
let $G_{\min ,\G }$ be the subsemigroup 
of $G$ that is generated by 
$\G _{\min }.$ 
\end{df}
\begin{rem}
\label{jminrem}
Let $G$ be a polynomial semigroup generated by a compact subset 
$\G $ of Poly$_{\deg \geq 2}.$ Suppose $G\in {\cal G}_{dis}.$ Then, 
 by Proposition~\ref{bminprop},  
 we have $\G _{\min }\neq \emptyset $ and 
 $\G \setminus \G _{\min }\neq \emptyset .$ 
 Moreover, $\G _{\min }$ is a compact subset of $\G .$ For, 
 if $\{ h_{n}\} _{n\in \NN }\subset \G _{\min }$ and 
 $h_{n}\rightarrow h_{\infty }$ in $\G $, 
 then for each repelling periodic point $z_{0}\in 
 J(h_{\infty })$ of $h_{\infty }$, 
we have that $d(z_{0}, J(h_{n}))\rightarrow 0$ as $n\rightarrow \infty $, 
which implies that $z_{0}\in J_{\min }$ and thus $h_{\infty }\in \G _{\min }.$ 
\end{rem}
\noindent {\bf Notation:} 
Let ${\cal F}:= \{ \varphi _{n}\} _{n\in \NN }$  
be a sequence  
of meromorphic functions 
in a domain $V.$ We say that a meromorphic function 
$\psi $ is a limit function of ${\cal F}$ 
if there exists a strictly increasing sequence $\{ n_{j}\} _{j\in \NN }$ of 
positive integers 
such that 
$\varphi _{n_{j}}\rightarrow \psi $ locally uniformly on 
$V$, as $j\rightarrow \infty .$   
\begin{df}
Let $\G $ and $S$ be non-empty subsets of Poly$_{\deg \geq 2}$ 
with $S\subset \G .$ We set 
$$R(\G ,S):= 
\left\{ \g =(\g _{1},\g _{2}, \ldots )\in \GN \mid 
\sharp (\{ n\in \NN \mid \g _{n}\in S\} )=\infty \right\} .$$ 
\end{df}

\begin{df}
\label{d:hypskew}
Let $f:X\times \CCI 
\rightarrow X\times \CCI $ be a 
rational skew product over 
$g:X\rightarrow X.$ 
We set
$$C(f):= \{ (x,y)\in X\times \CCI \mid y \mbox{ is a critical point of }
f_{x,1}\} .$$ 
Moreover, we set   
$P(f):=\overline{\cup _{n\in \NN  }f^{n}(C(f))}, $
where the closure is taken in the product space $X\times \CCI .$ 
This $P(f)$ is called the {\bf fiber-postcritical set} of 
$f.$ 

 We say that $f$ is hyperbolic 
(along fibers) if 
$P(f)\subset F(f).$

\end{df}
We present a result which describes 
the details of the fiberwise dynamics along 
$\g $ in $R(\G ,\G \setminus \G _{\min }).$  
We recall that a Jordan curve $\xi $ in $\CCI $ is said to be 
a $K$-quasicircle, if $\xi $ is the image of $S^{1}(\subset \CC )$  
under a $K$-quasiconformal 
homeomorphism $\varphi :\CCI \rightarrow \CCI .$  
(For the definition of a quasicircle and a quasiconformal homeomorphism, see 
\cite{LV}.) 
\begin{thm}
\label{mainth3}
Let $G$ be a polynomial 
semigroup generated by a compact subset $\G $ of 
{\em Poly}$_{\deg \geq 2}.$  Suppose $G\in {\cal G}_{dis}.$ 
Let $f:\GN \times \CCI 
\rightarrow \GN \times \CCI $ be 
the  skew product associated with 
the family $\G $  
of polynomials. 
Then, 
 all of the following statements 
\ref{mainth3-0},\ref{mainth3-1}, and \ref{mainth3-3} hold. 
\begin{enumerate}
\item \label{mainth3-0}
Let $\g \in R(\G ,\G \setminus \G _{\min }).$ 
Then, each limit function of 
$\{ f_{\g ,n}\} _{n\in \NN }$ in each connected component 
of $F_{\g }(f)$ is constant. 
\item \label{mainth3-1}
Let $S$ be a non-empty compact subset of $\G \setminus \G _{\min }.$ 
Then, for each $\g \in R(\G ,S)$, we have the following.

 \begin{enumerate}
 
 \item \label{mainth3-1-1}
 There exists exactly one bounded component $U_{\g }$ 
 of $F_{\g }(f).$
  Furthermore,\ 
 $\partial U_{\g }=\partial A_{\gamma }(f)=J_{\g }(f).$ 
 
 \item \label{mainth3-1-2}
For each $y\in U_{\g }$,  there exists a number 
 $n\in \NN $ such that $f_{\g ,n}(y)
 \in $ {\em int}$(\hat{K}(G)).$
 
 \item \label{mainth3-1-3}
 $\hat{J}_{\g }(f)=J_{\g }(f).$ 
 Moreover, the map $\omega \mapsto J_{\omega }(f)$ defined on 
 $\GN $ 
 is continuous at $\g $, with respect to the Hausdorff metric  
 in the space of non-empty compact subsets of $\CCI .$ 
 \item \label{mainth3-1-4}
 The 2-dimensional Lebesgue measure of 
 $\hat{J}_{\g }(f)=J_{\g }(f)$
 is equal to zero. 
 
 \end{enumerate}

\item 
\label{mainth3-3}
Let $S$ be a non-empty compact subset of 
$\G \setminus \G _{\min }.$ For each $p \in \NN ,\ $ 
we denote by $W_{S,p}$ the set of 
elements $\g =(\g _{1},\g _{2},\ldots )\in \GN $ such that 
for each $l\in \NN $, at least one of 
$\g _{l+1},\ldots ,\g _{l+p}$ belongs to $S.$
Let $\overline{f}:=
f| _{W_{S,p}\times \CCI }:
W_{S,p}\times \CCI \rightarrow 
W_{S,p}\times \CCI .$ Then,  
$\overline{f} $ is a hyperbolic skew product 
over the shift map $\sigma :W_{S,p}\rightarrow W_{S,p}$,  
and there exists a constant 
$K_{S,p}\geq 1$ such that
for each $\g \in W_{S,p},\ $ 
$\hat{J}_{\g }(f)=J_{\g }(f)
=J_{\g }(\overline{f})$ is a $K_{S,p}$-quasicircle.  
\end{enumerate}
\end{thm}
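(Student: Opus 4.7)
The plan is to establish the three items of the theorem in order, with each later item building on the earlier ones.

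For item \ref{mainth3-0}, I would argue by contradiction: assume some connected component $V$ of $F_{\g}(f)$ admits a non-constant limit function $\psi$ of $\{f_{\g,n}\}$. Applying Lemma~\ref{nclimlem} (the non-constant-limit-function lemma) gives geometric information pinning $\psi(V)$ down to a set tied to the minimal Julia component $J_{\min}$ from Theorem~\ref{mainth2}-\ref{mainth2-3}. Because $\g\in R(\G,\G\setminus \G_{\min})$, infinitely many partial compositions end with some $h\in \G\setminus \G_{\min}$. Each such $h$ satisfies $J(h)\not\subset J_{\min}$, and Theorem~\ref{mainth1}-\ref{mainth1-3} applied to the total ordering of $({\cal J},\leq)$ forces $h^{-1}$ to push $J_{\min}$ strictly outward, i.e.\ $h(\hat{K}(G))\subset \mbox{int}(\hat{K}(G))$. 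Combined with the restriction on $\psi(V)$ coming from Lemma~\ref{nclimlem}, this contradicts the non-constancy of $\psi$.

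For item \ref{mainth3-1}, I would derive each sub-claim from item \ref{mainth3-0} together with Theorem~\ref{mainth2}. By item \ref{mainth3-0} every Fatou component of $F_{\g}(f)$ admits only constant limit functions, and each such constant is either $\infty $ or lies in $\hat{K}(G)$. The unbounded component is therefore $A_{\g}(f)$. Any bounded component $V$ has its orbits accumulating in $\hat{K}(G)$, and using the strict inward push $h(\hat{K}(G))\subset \mbox{int}(\hat{K}(G))$ obtained above, together with $\mbox{int}(\hat{K}(G))\neq \emptyset $ from Theorem~\ref{mainth2}-\ref{mainth2-4}, one shows the orbit actually enters $\mbox{int}(\hat{K}(G))$, giving item \ref{mainth3-1-2}. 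Theorem~\ref{mainth1}-\ref{mainth1-2} together with the total surrounding order then forces a unique bounded component $U_{\g}$, necessarily simply connected, with $\partial U_{\g}=\partial A_{\g}(f)=J_{\g}(f)$, giving item \ref{mainth3-1-1}. For item \ref{mainth3-1-3}, the containment $\hat{J}_{\g}(f)\supset J_{\g}(f)$ is automatic; equality and continuity of $\omega\mapsto J_{\omega}(f)$ at $\g$ follow by combining the lower semi-continuity of $\omega\mapsto J_{\omega}(f)$ (cf.\ \cite{J}) with the non-existence of non-constant limits, which prevents nearby fibers' Julia sets from accumulating inside $F_{\g}(f)$. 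For item \ref{mainth3-1-4}, I would use an area argument: the preimages under $f_{\g,n}$ of a fixed neighbourhood of $\mbox{int}(\hat{K}(G))$ exhaust $U_{\g}$, and combined with a standard distortion estimate the measure of their complement shrinks to zero, so $J_{\g}(f)$ has planar Lebesgue measure zero.

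For item \ref{mainth3-3}, I would first check that the restricted skew product $\overline{f}$ on $W_{S,p}\times \CCI $ is hyperbolic. The fiber-postcritical set lies in $W_{S,p}\times \overline{P^{\ast }(G)}\subset W_{S,p}\times \hat{K}(G)$; because every $\g\in W_{S,p}$ encounters $S\subset \G\setminus \G_{\min}$ inside every window of length $p$, at most $p$ iterations push $\hat{K}(G)$ into $\mbox{int}(\hat{K}(G))$, and by item \ref{mainth3-1} this interior is contained in $\tilde{F}(\overline{f})$. Compactness of $\G $ then upgrades this to the uniform inclusion $P(\overline{f})\subset \tilde{F}(\overline{f})$. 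Granted fiberwise hyperbolicity, each $J_{\g}(\overline{f})=J_{\g}(f)$ is a Jordan curve separating $A_{\g}(f)$ from the unique bounded basin $U_{\g}$. Applying the uniform fiberwise quasiconformal surgery of \cite{SdpbpIII}, obtained by solving a fibered family of Beltrami equations with uniformly bounded dilatation, produces a fiberwise quasiconformal conjugation with a common dilatation constant $K_{S,p}$; it follows that each $J_{\g}(\overline{f})$ is a $K_{S,p}$-quasicircle.

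The main obstacles, in increasing order of difficulty, are: (i) the delicate application of Lemma~\ref{nclimlem} in item \ref{mainth3-0}, requiring the careful interplay between the total order on $({\cal J},\leq)$ and the infinite occurrence of $\g_n\in \G\setminus \G_{\min}$; (ii) the identification $\hat{J}_{\g}(f)=J_{\g}(f)$ and the continuity of $\omega\mapsto J_{\omega}(f)$ in item \ref{mainth3-1-3}, where potential hidden accumulation of fiber Julia sets must be excluded; and (iii) extracting a \emph{uniform} dilatation bound from the surgery in item \ref{mainth3-3}, for which compactness of $\G $ and the uniform window condition defining $W_{S,p}$ are essential.
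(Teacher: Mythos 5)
Your overall architecture does track the paper's (use Lemma~\ref{nclimlem}/Lemma~\ref{ncintk} to force orbits into $\mbox{int}(\hat{K}(G))$, then analyze the bounded Fatou components, then prove hyperbolicity of $\overline{f}$ on $W_{S,p}$ and invoke the uniform fiberwise surgery of \cite{SdpbpIII}), but at three decisive points you assert the conclusion without the mechanism that actually carries the proof. In statement \ref{mainth3-0} you say that the inclusion $h(\hat{K}(G))\subset \mbox{int}(\hat{K}(G))$ for $h\in \G \setminus \G _{\min }$, ``combined with the restriction on $\psi (V)$,'' contradicts non-constancy of the limit; no contradiction follows from those two facts alone. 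The paper's argument is quantitative: since $\bigcup _{h\in \G \setminus \G _{\min }}h(\hat{K}(G))$ is compactly contained in $\mbox{int}(\hat{K}(G))$ (a result of \cite{SdpbpI}, not of the total ordering in Theorem~\ref{mainth1}), every application of an element of $\G \setminus \G _{\min }$ contracts the hyperbolic metric of $\mbox{int}(\hat{K}(G))$ by a uniform factor $c<1$, and because $\g \in R(\G ,\G \setminus \G _{\min })$ supplies infinitely many such coordinates, all limit functions past the entry time are constant. Without this contraction (or an equivalent derivative/normal-family estimate) your contradiction does not materialize. Similarly, for \ref{mainth3-1-1} the appeal to Theorem~\ref{mainth1}-\ref{mainth1-2} is a non sequitur: that statement concerns components of $F(G)$, not of the fiberwise set $F_{\g }(f)$, which strictly contains $F(G)$ and could a priori have many bounded components (as happens for a single polynomial). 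The paper's uniqueness proof is a genuine construction: pick $h\in \G _{\min }$, pull back $J(h)$ along $\g $ at the times $s_{l}$ with $\g _{s_{l}}\in S$, use the surrounding order to place all these pullbacks $J_{\g ^{l}}(f)$ in one bounded component $U_{\g }$, and show that any second bounded component would be mapped onto a component containing $K(h)$ and hence would meet some $J_{\g ^{l}}(f)$. Nothing in your sketch replaces this.

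The largest gap is \ref{mainth3-1-4}. Your ``area argument'' about preimages of a neighborhood of $\mbox{int}(\hat{K}(G))$ exhausting $U_{\g }$ concerns $U_{\g }$, not the measure of $J_{\g }(f)$, and no ``standard distortion estimate'' is available a priori since $G$ is not assumed hyperbolic or semi-hyperbolic. The paper's proof is a Lebesgue density point argument: assuming $m_{2}(J_{\g }(f))>0$, it pushes a density point $b$ forward at the times $n_{j}$ with $\g _{n_{j}}\in S$; the images accumulate at a point $a\in \g _{\infty }^{-1}(J(G))$, which lies at a definite distance from $P^{\ast }(G)$ precisely because $S\subset \G \setminus \G _{\min }$; this yields univalent inverse branches on a fixed disk $D(a,r)$, Koebe distortion, shrinking of the branches, and finally $D(a,r)\subset J(G)$, contradicting $a\notin J_{\min }$. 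This interplay between the compactness of $S$ and the separation of $\g _{\infty }^{-1}(J(G))$ from the postcritical set is the essential idea and is absent from your proposal. Finally, in \ref{mainth3-3} your argument only controls the tails of the postcritical orbits (after an element of $S$ has acted); to get $P(\overline{f})\subset \tilde{F}(\overline{f})$ you must also exclude that an early postcritical point lies in $\tilde{J}(\overline{f})$. The paper does this by forward invariance: a point of $\tilde{J}(\overline{f})$ has its image at the first $S$-time in $\g _{j}^{-1}(J(G))$, which is disjoint from $P(G)$, whereas postcritical orbits project into $P(G)$. Your step can be repaired in the same spirit (forward invariance of $\tilde{J}(\overline{f})$ together with $\mbox{int}(\hat{K}(G))\subset F(G)$), but as written it is incomplete; the concluding application of the surgery theorem of \cite{SdpbpIII} to obtain the uniform constant $K_{S,p}$ matches the paper.
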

\begin{df}
\label{d:hypsh}
Let $G$ be a rational semigroup. 
\begin{enumerate}
\item 
We say that 
$G$ is hyperbolic if $P(G)\subset F(G).$ 
\item We say that $G$ is semi-hyperbolic if 
there exists a number $\delta >0$ and a 
number $N\in \NN $ such that 
for each $y\in J(G)$ and each $g\in G$, 
we have $\deg (g:V\rightarrow B(y,\delta ))\leq N$ for 
each connected component $V$ of $g^{-1}(B(y,\delta ))$, 
where $B(y,\delta )$ denotes the ball of radius $\delta $ 
with center $y$ with respect to the spherical distance, 
and $\deg (g:\cdot \rightarrow \cdot )$ denotes the 
degree of finite branched covering. 
(For background on semi-hyperbolicity, see \cite{S1} and \cite{S4}.) 
\end{enumerate}
\end{df}

\begin{thm}
\label{mainth3-2}
Let $G$ be a polynomial semigroup generated by a compact 
subset $\G $ of {\em Poly}$_{\deg \geq 2}.$ 
Let $f:\GNCR $ be the skew product associated with 
the family $\G .$ Suppose that $G\in {\cal G}_{dis}$ and that 
$G$ is semi-hyperbolic. Let $\g \in R(\G , \G \setminus \G _{\min })$ be 
any element. Then, $\hat{J}_{\g }(f)=J_{\g }(f)$ and 
$J_{\g }(f)$ is a Jordan curve. Moreover, 
for each point $y_{0}\in $ {\em int}$(K_{\g }(f))$, there exists an $n\in \NN $ 
such that $f_{\g ,n}(y_{0})\in $ {\em int}$(\hat{K}(G)).$

\end{thm}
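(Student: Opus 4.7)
The strategy is to strengthen the conclusions of Theorem~\ref{mainth3} under the extra hypothesis of semi-hyperbolicity. The two key consequences of semi-hyperbolicity I will invoke are: (i) the continuity of the map $\omega\mapsto J_\omega(f)$ on $\GN$ (established in \cite{S1}); and (ii) the John-domain property of the basin of infinity $A_\g(f)$ for every $\g\in\GN$ (see \cite{S4}).

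\textbf{Interior dynamics.} I would apply Theorem~\ref{mainth3}-\ref{mainth3-0}: since $\g\in R(\G,\G\setminus\G_{\min})$, every limit function of $\{f_{\g,n}\}$ on any component $V$ of $F_\g(f)$ is constant. Given $y_0\in\mathrm{int}(K_\g(f))$ sitting in a Fatou component $V$, extract a subsequence $n_j$ with $f_{\g,n_j}|_V\to c$ locally uniformly. Since $y_0\in K_\g(f)$ the orbit is bounded, so $c\in\CC$, and because $\hat{K}(G)$ is closed and forward-invariant under every $g\in G$ we have $c\in\hat{K}(G)$. To place $c$ inside $\mathrm{int}(\hat{K}(G))$ rather than on $\partial\hat{K}(G)\subset J(G)$ I would exploit the bounded-degree-of-preimages condition (and resulting uniform expansion) supplied by semi-hyperbolicity: a stable constant limit value on $J(G)$ along a subsequence would contradict this expansion (cf.\ the non-constant-limit-function lemma referenced in the introduction). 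Hence $c\in\mathrm{int}(\hat{K}(G))$, and $f_{\g,n_j}(y_0)\in\mathrm{int}(\hat{K}(G))$ for all sufficiently large $j$.

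\textbf{Jordan curve and fibre identification.} The equality $\hat J_\g(f)=J_\g(f)$ is a direct consequence of (i): any $(\g,z)\in\tilde J(f)$ is a limit of points $(\omega_n,z_n)$ with $z_n\in J_{\omega_n}(f)$ and $\omega_n\to\g$, and continuity of $\omega\mapsto J_\omega(f)$ forces $z\in J_\g(f)$. For the Jordan property, the interior-dynamics result above implies uniqueness and simple connectivity of the bounded component $U_\g:=\mathrm{int}(K_\g(f))$ (the doubly connected alternative in the fibrewise analogue of Theorem~\ref{mainth1}-\ref{mainth1-2} is excluded by the universal attraction into $\mathrm{int}(\hat{K}(G))$, and the uniqueness is forced because distinct bounded Fatou components cannot both be absorbed without being identified via continuity of $J_\omega$). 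Both $U_\g$ and $A_\g(f)$ are then simply connected with common boundary $J_\g(f)$; by (ii), $\partial A_\g(f)$ is locally connected. A parallel John-type argument on $U_\g$, based on eventual landing in $\mathrm{int}(\hat{K}(G))$ and the uniform bounded-multiplicity of branch coverings from semi-hyperbolicity, gives local connectivity of $\partial U_\g$. Two simply connected plane domains sharing a common locally connected Peano boundary must share that boundary as a Jordan curve.

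\textbf{Main obstacle.} The subtlest step is transferring semi-hyperbolicity from the semigroup $G$ to the skew product $f$ along the specific fibre $\g$, used both to rule out constant limit values on $J(G)$ and to establish local connectivity of $\partial U_\g$. This cannot proceed via the compact-gap argument of Theorem~\ref{mainth3}-\ref{mainth3-3} (which provides uniform quasicircles only under a bounded-gap hypothesis on the $\g_n\in S$), because in the present setting the indices $n$ with $\g_n\in\G\setminus\G_{\min}$ may be arbitrarily sparse and the $\g_n$ themselves may accumulate on $\G_{\min}$. Instead one must use the full strength of semi-hyperbolicity's uniform bounded-degree condition together with the machinery of \cite{S1,S4}, so that the relevant estimates hold along the single fibre $\g$ without passing to any compact sub-collection of generators.
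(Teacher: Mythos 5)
Your overall skeleton (constant limit functions from Theorem~\ref{mainth3}-\ref{mainth3-0}, semi-hyperbolicity supplying continuity of $\omega \mapsto J_{\omega }(f)$ and the Johnness of $A_{\g }(f)$) points at the right machinery, and your endgame for the Jordan property --- $A_{\g }(f)$ John $\Rightarrow $ $J_{\g }(f)$ locally connected, then the ``two complementary simply connected domains with common locally connected boundary'' cut-point argument --- is a legitimate alternative to the paper's citation of \cite[Proposition 4.6]{SdpbpIII}. But there are two genuine gaps. First, your interior-dynamics step is not a proof. The claim that the constant limit value $c$ lies in $\hat{K}(G)$ ``because $\hat{K}(G)$ is closed and forward-invariant'' is a non sequitur: $y_{0}$ itself need not lie in $\hat{K}(G)$, and forward invariance of $\hat{K}(G)$ says nothing about limits of the orbit of $y_{0}$ along $\g $. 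What is actually needed is the lemma that constant limit functions of $\{ f_{\g ,n}\} $ lie in $P^{\ast }(G)$ (\cite[Lemma 3.13]{SdpbpIII}), and then the exclusion of $\partial \hat{K}(G)$ is not a vague ``expansion contradicts a stable limit on $J(G)$'' argument but the concrete statement $\liminf _{n}d(f_{\g ,n}(y_{0}),J(G))>0$, which the paper gets by combining Theorem~\ref{mainth3}-\ref{mainth3-0} with \cite[Lemma 1.10]{S1}; together with $\partial \hat{K}(G)\subset J_{\min }$ (\cite{SdpbpI}) this puts the limit in $P^{\ast }(G)\cap F(G)\subset \mbox{int}(\hat{K}(G))$.

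Second, and more seriously, you have no argument for the existence and uniqueness of the bounded component $U_{\g }$ of $F_{\g }(f)$; ``distinct bounded Fatou components cannot both be absorbed without being identified via continuity of $J_{\omega }$'' is not a proof --- continuity of the fiber Julia sets in $\omega $ says nothing about the components of a single fiber's Fatou set. Uniqueness is essential to your Jordan-curve argument (a basilica-like configuration with locally connected $J_{\g }(f)$ and many bounded components would satisfy everything else you use), and it is exactly here that the paper reruns the Claim 3 construction from the proof of Theorem~\ref{mainth3}-\ref{mainth3-1}: pick $h\in \G _{\min }$ and indices $s_{l}$ with $\g _{s_{l}}\notin \G _{\min }$, use \cite[Theorem 2.20.5(b)]{SdpbpI} and \cite[Lemma 3.9]{SdpbpIII} to get $J(h)=J_{\rho }(f)<J_{\tilde{\g }^{l}}(f)$, pull back to obtain $J_{\g ^{l}}(f)<J_{\g }(f)$ inside one bounded component, and then derive a contradiction from the surjectivity of $f_{\g ,s_{l}-1}$ on any putative second bounded component (this step is also where the eventual landing in $\mbox{int}(\hat{K}(G))$ is used). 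Without supplying that argument (or an equivalent), the proposal does not establish that $J_{\g }(f)$ is a Jordan curve.
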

We next present a result which states that there exist    
 families of uncountably many mutually disjoint quasicircles 
with uniform distortion, densely inside the Julia set of 
a semigroup in ${\cal G}_{dis}.$  
\begin{thm}
\label{cantorqc}
{\bf (Existence of a Cantor family of quasicircles.)} 
Let $G\in {\cal G}_{dis}$ (possibly generated by a non-compact 
family) 
and let $V$ be an open subset of $\CCI $ 
with $V\cap J(G)\neq \emptyset. $  
Then, there exist elements $g_{1}$ and 
 $g_{2}$ in $G$ such that all of the following hold.
 \begin{enumerate}
 \item 
 \label{cantorqc1}
 $H=\langle g_{1},g_{2}\rangle $ satisfies 
 that $J(H)\subset J(G).$ 
 \item 
 \label{cantorqc2}
 There exists a non-empty open set $U$ in $\CCI $ such that    
 $g_{1}^{-1}(\overline{U})\cup g_{2}^{-1}(\overline{U}) 
 \subset U$, and such that $g_{1}^{-1}(\overline{U})
 \cap g_{2}^{-1}(\overline{U})=\emptyset .$  
 
 \item 
 \label{cantorqc3}
 $H=\langle g_{1},g_{2}\rangle $ is a 
 hyperbolic polynomial semigroup.
 
 \item  
 \label{cantorqc4}
 Let 
 $f:\GN \times \CCI \rightarrow 
 \GN \times \CCI $ be the skew product  
 associated with the family  
 $\G= \{ g_{1},g_{2}\} $ of polynomials. Then, 
 we have the following. 
 \begin{enumerate}
 \item \label{cantorqc4a}
 $J(H)=\bigcup _{\g \in \GN }
 J_{\g }(f)$ (disjoint union). 
 Each $J_{\g }(f)$ is connected and 
 $(\{ J_{\g }(f)\} _{\g \in \GN},\leq )$ is totally ordered. 
 \item \label{cantorqc4b}
 For each connected component $J$ of $J(H)$, there exists an 
 element $\g \in \GN $ such that   
 $J=J_{\g }(f).$  
  
 \item \label{cantorqc4c}
 There exists a constant $K\geq 1$
 independent of $J$ 
  such that each connected component $J$ of $J(H)$ 
  is a $K$-quasicircle. 
  \item 
 \label{cantorqc5}
 The map $\g \mapsto J_{\g }(f)$, defined for all 
 $\g \in \GN $, is continuous with respect to 
 the Hausdorff metric in the space of non-empty compact subsets of 
 $\CCI $, and 
 injective.   
 \item 
 \label{cantorqc6}
For each element $\g \in \GN ,$  
 $J_{\g }(f)\cap V\cap J(G)\neq \emptyset .$ 
\item 
\label{cantorqc7}
Let $\omega \in \GN $ be an element such that 
$\sharp (\{ j\in \NN \mid \omega _{j}=g_{1}\} )=\infty $ 
and such that $\sharp (\{ j\in \NN \mid \omega _{j}=g_{2}\} )=\infty .$ 
Then, 
 $J_{\omega }(f)$ does not meet the boundary of any connected component of 
 $F(G).$  
 \end{enumerate} 
\end{enumerate}
\end{thm}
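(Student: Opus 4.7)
The plan is to reduce everything to constructing two elements $g_1, g_2 \in G$ and a non-empty open set $U$ in $\CCI$ meeting $V$ for which $g_1^{-1}(\overline{U}) \cup g_2^{-1}(\overline{U}) \subset U$ and $g_1^{-1}(\overline{U}) \cap g_2^{-1}(\overline{U}) = \emptyset$ (property~\ref{cantorqc2}). Once this open set condition is achieved, the remaining conclusions follow from the standard hyperbolic IFS / symbolic dynamics formalism combined with Theorem~\ref{mainth3}-\ref{mainth3-3}.

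The construction would proceed as follows. First, by the classical density of repelling fixed points of elements of $G$ in $J(G)$ (\cite{HM1,GR}), I pick $h \in G$ with a repelling fixed point $z_0 \in V \cap J(G)$; replacing $h$ by a large iterate, the inverse branch of $h$ at $z_0$ becomes a strong contraction on a small Jordan disk $D \subset V$ with $\overline{D}$ containing $z_0$. Second, using Theorems~\ref{mainth1} and \ref{mainth2} I locate the maximal and minimal components $J_{\max}, J_{\min}$ of $J(G)$, separated by a doubly connected component of $F(G)$; this structural information together with Proposition~\ref{bminprop} (applied to a compactly generated sub-semigroup in the general case) yields an auxiliary element $h_1 \in G$ whose Julia set sits in a component of $J(G)$ different from the one containing $z_0$. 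Setting $g_1 = h^{N_1}$ and $g_2$ equal to a suitable composition built from $h^{N_2}$ and $h_1^M$ for sufficiently large $N_1, N_2, M$, and enlarging $D$ to a Jordan domain $U$ (which must avoid $\hat{K}(G)$, because $\hat{K}(G) \subset K(g_i)$ for every $g_i \in G$ forcing $U$ to be a peripheral neighborhood near $J_{\max}$), the preimages $g_1^{-1}(\overline{U})$ and $g_2^{-1}(\overline{U})$ become disjoint sub-domains of $U$.

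With $(g_1, g_2, U)$ in hand, property~\ref{cantorqc1} is automatic (subsemigroups have smaller Julia sets) and property~\ref{cantorqc3} (hyperbolicity of $H = \langle g_1, g_2\rangle$) follows because postcritical boundedness of $G$ places all critical values of $g_1, g_2$ into $\hat{K}(G) \subset F(G) \subset F(H)$, while the open set condition forces $J(H) \subset U$, yielding $P(H) \cap J(H) = \emptyset$. The decomposition $J(H) = \bigsqcup_{\gamma \in \GN} J_\gamma(f)$ in~\ref{cantorqc4a}--\ref{cantorqc4b} is the standard coding of a conformal hyperbolic IFS with disjoint open set condition: writing $\Phi_w = g_{w_1}^{-1}\circ\cdots\circ g_{w_n}^{-1}$ for a finite word $w$, the diameters of $\Phi_w(\overline{U})$ shrink uniformly in $|w|$, so $J(H) = \bigcap_n \bigcup_{|w|=n} \Phi_w(\overline{U})$ and each $\gamma \in \GN$ determines $J_\gamma(f)$ as the corresponding nested intersection. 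The uniform quasicircle bound~\ref{cantorqc4c} comes directly from Theorem~\ref{mainth3}-\ref{mainth3-3} applied with $S = \{g_1, g_2\}$, $p = 1$. Injectivity and Hausdorff continuity of $\gamma \mapsto J_\gamma(f)$ in~\ref{cantorqc5} follow from disjointness of the level-one cells and uniform contraction; $J_\gamma(f) \cap V \neq \emptyset$ in~\ref{cantorqc6} follows since $J_\gamma(f) \subset U$ and we have arranged $U \cap V \supset D$; and~\ref{cantorqc7} follows because a sequence $\omega$ using both generators infinitely often keeps $J_\omega(f)$ continually shuffled between the two disjoint cells, hence $J_\omega(f)$ cannot coincide with any single preimage of $\partial F_\infty(G)$ or with the boundary of any other component of $F(G)$.

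The principal obstacle is the construction of $(g_1, g_2, U)$ in paragraph two. The constraint $\hat{K}(G) \subset K(g_i)$ for every $g_i \in G$ forces $U$ to avoid the bulk of the smallest filled-in Julia set, and Theorem~\ref{mainth1}-\ref{mainth1-3} (connectedness of preimages of a single component under a single element of $G$) rules out getting two disjoint preimages from any one element. Disjointness must therefore be engineered by combining two distinct elements $h, h_1$ coming from different positions in the disconnected structure of $J(G)$, and quantifying the required exponents $N_1, N_2, M$ demands precise distortion estimates for the inverse branches of $h$ and $h_1$, together with careful exploitation of the separating doubly connected Fatou component.
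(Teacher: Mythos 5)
Your proposal leaves the real content of the theorem unproved. The entire difficulty is the construction of the pair $(g_{1},g_{2})$ and the open set $U$ in statement \ref{cantorqc2}, and this is exactly the step you defer ("the principal obstacle") without a working mechanism. Contraction of the single inverse branch of $h$ fixing the repelling point $z_{0}$ says nothing about the \emph{full} preimage $g_{1}^{-1}(\overline{U})$, which has components near every part of $J(h)$, not just near $z_{0}$; enlarging the disk $D$ to a "peripheral" Jordan domain avoiding $\hat{K}(G)$ gives no reason why $g_{1}^{-1}(\overline{U})\cup g_{2}^{-1}(\overline{U})\subset U$ should hold. The paper's construction is quite different: it chooses $\alpha _{1}\in G$ with $J(\alpha _{1})\cap V\neq \emptyset $ and $\alpha _{2}\in G$ whose Julia set lies in a different component of $J(G)$, passes to the words $\alpha _{2}\alpha _{1}^{l}$ and $\alpha _{1}\alpha _{2}^{l}$ (with $l$ large so that both Julia sets meet the right places and are disjoint), shows via Theorem~\ref{mainth3}-\ref{mainth3-3} applied to $\G =\{ \alpha _{1},\alpha _{2}\} $ with $p=2l+1$ that $H_{0}=\langle \alpha _{2}\alpha _{1}^{l},\alpha _{1}\alpha _{2}^{l}\rangle $ is hyperbolic with these two Julia sets disjoint quasicircles, one surrounding the other, and then takes $U$ to be a small neighborhood of the closed annular region $A=K(\alpha _{1}\alpha _{2}^{l})\setminus \mbox{int}(K(\alpha _{2}\alpha _{1}^{l}))$; since $P^{\ast }(H_{0})\subset \mbox{int}(K(\alpha _{2}\alpha _{1}^{l}))$, high iterates $h_{i}$ of the two words pull $\overline{U}$ into disjoint neighborhoods of the two quasicircles inside $U$. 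Nothing in your sketch substitutes for this surrounding-order/annulus mechanism. Two further ingredients of your reduction are also flawed: the hyperbolicity argument rests on the false inclusion $\hat{K}(G)\subset F(G)$ (its boundary lies in $J_{\min }\subset J(G)$, and $P^{\ast }(G)\subset F(G)$ would be hyperbolicity of $G$, which is not assumed); and Theorem~\ref{mainth3}-\ref{mainth3-3} cannot be invoked with $S=\{ g_{1},g_{2}\} $, $p=1$, because $S$ must be a compact subset of $\G \setminus \G _{\min }$ and $\G _{\min }\neq \emptyset $, so the constant words are never covered that way; the uniform quasicircle constant for \emph{all} $\g \in \GN $ is obtained in the paper precisely by the $p=2l+1$ trick on the two-letter alphabet $\{ \alpha _{1},\alpha _{2}\} $.

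Two of the individual conclusions are also not proved by your arguments. For \ref{cantorqc6}, knowing $J_{\g }(f)\subset \overline{U}$ and $U\cap V\supset D$ does not give $J_{\g }(f)\cap V\neq \emptyset $: a given fiberwise Julia set could lie entirely in the part of $U$ away from $D$. The paper arranges (using continuity of $J(\cdot )$ along words) that $J(g_{1})=J_{\min }(H)$ and $J(g_{2})=J_{\max }(H)$ both meet the connected set $V$, and then uses that every other $J_{\g }(f)$ satisfies $J_{\min }(H)<J_{\g }(f)<J_{\max }(H)$ in the surrounding order, so it must separate two points of $V$ and hence meet $V$. For \ref{cantorqc7}, "continually shuffled between the two cells" is not an argument; the paper's proof is a squeeze: approximating $\omega $ by words $\omega ^{r}$ (eventually all $g_{1}$) and $\rho ^{r}$ (eventually all $g_{2}$) gives $J_{\omega ^{r}}(f)<J_{\omega }(f)<J_{\rho ^{r}}(f)$ with both converging to $J_{\omega }(f)$ in the Hausdorff metric by \ref{cantorqc5}, which forces $J_{\omega }(f)$ to avoid the boundary of every component of $F(H)$, hence of $F(G)$ since $F(G)\subset F(H)$. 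So the proposal, as it stands, has genuine gaps both in the central construction and in the derivation of \ref{cantorqc3}, \ref{cantorqc4c}, \ref{cantorqc6} and \ref{cantorqc7}.
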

\begin{rem}
\label{r:cantorf}
This ``Cantor family of quasicircles'' in the research of rational semigroups was introduced by the author of this paper. 
By using this idea, in \cite{SS} (which was written after this paper), 
it is shown that for a polynomial semigroup $G\in {\cal G}_{dis}$ which is generated by 
a (possibly non-compact) family of Poly$_{\deg \geq 2}$, if $A$ and $B$ are two different 
doubly connected components of $F(G)$, then there exists a Cantor family ${\cal C}$ of quasicircles in $J(G)$ 
such that each element of ${\cal C}$ separates $\overline{A}$ and $\overline{B}$.      
In Theorem~\ref{cantorqc} of this paper, 
we show that there exist Cantor families of quasicircles {\em densely} inside the Julia set of a semigroup 
$G\in {\cal G}_{dis}$, which is of independent value. 
\end{rem}
\subsection{Fiberwise Julia sets that are Jordan curves but not quasicircles}
\label{fjjq}
We present a result on a sufficient condition for a fiberwise Julia set 
$J_{x}(f)$ to satisfy that $J_{x}(f)$ is  a Jordan curve but not a quasicircle, 
the unbounded component of $\CCI \setminus J_{x}(f)$ is  a John domain, and 
the bounded component of $\CC \setminus J_{x}(f)$ is not a John domain. 
Note that we have many examples of this phenomenon (see Proposition~\ref{Constprop},Remark~\ref{r:manyex},Example~\ref{jbnqexfirst}), 
and note also that 
this phenomenon cannot hold in the usual iteration dynamics of a single polynomial map 
$g$ with $\deg (g)\geq 2$ (see Remark~\ref{r:jbnq}).   
The proofs are given in Section~\ref{Proofs of fjjq}.
\begin{df}
Let $V$ be a subdomain of $\CCI $ such that 
$\partial V\subset \CC .$  
We say that $V$ is a John domain if there exists a 
constant $c>0$ and a point $z_{0}\in V$ ($z_{0}=\infty $ when 
$\infty \in V$) satisfying the following:  
for all $z_{1}\in V$ there exists an arc $\xi \subset V$ connecting 
$z_{1}$ to $z_{0}$ such that 
for any $z\in \xi $, we have 
$\min \{ |z-a|\mid a\in \partial V\} \geq c|z-z_{1}|.$
\end{df}
\begin{rem}
Let $V$ be a simply connected domain in $\CCI $ such that 
$\partial V\subset \CC .$ 
It is well-known that  
if $V$ is a John domain, then 
$\partial V$ is locally connected (\cite[page 26]{NV}). 
Moreover, a Jordan curve $\xi \subset \CC $ is a 
quasicircle if and only if both components of $\CCI \setminus \xi $ are 
John domains (\cite[Theorem 9.3]{NV}).  
\end{rem}
\begin{thm}
\label{mainthjbnq}
Let $G$ be a polynomial 
semigroup generated by a compact subset $\G $ of 
{\em Poly}$_{\deg \geq 2}.$  Suppose that $G\in {\cal G}_{dis}.$ 
Let $f:\GN \times \CCI 
\rightarrow \GN \times \CCI $ be 
the  skew product associated with 
the family $\G $  
of polynomials. 
Let $m\in \NN $ and suppose that there exists an element 
$(h_{1},h_{2},\ldots ,h_{m})\in \G ^{m}$  
such that $J(h_{m}\circ \cdots \circ h_{1})$ is not a quasicircle.
Let $\alpha =(\alpha _{1},\alpha _{2},\ldots )\in \GN $ be the  
element such that for each $k,l\in \NN \cup \{ 0\} $ with $1\leq l\leq m$, 
$\alpha _{km+l}=h_{l}.$ 
Then, the following statements \ref{mainthjbnq1} and 
\ref{mainthjbnq2} hold.
\begin{enumerate}
\item \label{mainthjbnq1}
Suppose that $G$ is hyperbolic. 
Let $\g \in R(\G ,\G \setminus \G _{\min })$ be an element 
such that there exists a sequence $\{ n_{k}\} _{k\in \NN }$ 
of positive integers satisfying that 
$\sigma ^{n_{k}}(\g )\rightarrow \alpha $ as $k\rightarrow \infty .$ 
Then, $J_{\g }(f)$ is a Jordan curve but not a quasicircle. 
Moreover, the unbounded component $A_{\g }(f)$ of 
$F_{\g }(f)$ is a John domain, but the unique bounded component 
$U_{\g }$ of $F_{\g }(f)$ is not a John domain.
\item \label{mainthjbnq2}
Suppose that $G$ is semi-hyperbolic. 
Let $\rho _{0}\in 
\G \setminus \G _{\min }$ be any element and 
let $\beta := (\rho _{0},\alpha _{1},\alpha _{2},\ldots )\in \GN .$ 
Let $\g \in R(\G ,\G \setminus \G _{\min })$ be an element 
such that there exists a sequence $\{ n_{k}\} _{k\in \NN }$ 
of positive integers satisfying that 
$\sigma ^{n_{k}}(\g )\rightarrow \beta $ as $k\rightarrow \infty .$ 
Then, $J_{\g }(f)$ is a Jordan curve but not a quasicircle. 
Moreover, the unbounded component $A_{\g }(f)$ of 
$F_{\g }(f)$ is a John domain, but the unique bounded component 
$U_{\g }$ of $F_{\g }(f)$ is not a John domain.

\end{enumerate}
\end{thm}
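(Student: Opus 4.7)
The plan is to establish, separately, the three assertions of situation $(\ast )$: $J_{\g }(f)$ is a Jordan curve, $A_{\g }(f)$ is a John domain, and $U_{\g }$ is not a John domain. The third assertion is the heart of the theorem and will be proved by contradiction, using the dynamical convergence $\sigma ^{n_{k}}(\g )\to \alpha $ (respectively $\to \beta $) together with the (semi-)hyperbolicity of $G$.

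The Jordan curve statement is immediate from Theorem~\ref{mainth3-2}. Since hyperbolicity implies semi-hyperbolicity, both parts of the theorem fall within its scope; with $\g \in R(\G ,\G \setminus \G _{\min })$, the conclusion $\hat{J}_{\g }(f)=J_{\g }(f)$ is a Jordan curve follows directly. For the Johnness of $A_{\g }(f)$, I would run a fiberwise variant of the proof from \cite{S4} that the basin of infinity of a semi-hyperbolic polynomial semigroup is a John domain. The semi-hyperbolicity of $G$ provides uniformly bounded-degree inverse branches of $f_{\g ,n}$ on small spherical disks meeting $J_{\g }(f)$; composing such branches to pull back a standard John corridor based at $\infty $ produces, for any $z\in A_{\g }(f)$, a John arc in $A_{\g }(f)$ connecting $z$ to $\infty $, with constant independent of $z$.

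The core of the argument is the non-Johnness of $U_{\g }$, which I prove by contradiction. Assume $U_{\g }$ is John with constant $c>0$. Shifting by $n_{k}$ preserves membership in $R(\G ,\G \setminus \G _{\min })$, so each $U_{\sigma ^{n_{k}}(\g )}$ is the unique bounded component of $F_{\sigma ^{n_{k}}(\g )}(f)$, and $f_{\g ,n_{k}}$ restricts to a proper holomorphic surjection $U_{\g }\to U_{\sigma ^{n_{k}}(\g )}$ that sends the Jordan curve $J_{\g }(f)$ onto the Jordan curve $J_{\sigma ^{n_{k}}(\g )}(f)$. Using either the uniform expansion of $f$ on $\tilde{J}(f)$ (hyperbolic case) or uniformly bounded-degree inverse branches combined with the initial kick provided by $\rho _{0}\in \G \setminus \G _{\min }$ (semi-hyperbolic case), I would show that the John constant of $U_{\sigma ^{n_{k}}(\g )}$ is bounded uniformly in $k$ by a constant depending only on $c$ and the (semi-)hyperbolicity data of $G$. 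Letting $k\to \infty $, a Carath\'eodory-kernel convergence argument then identifies the limit with a bounded Fatou component of $h=h_{m}\circ \cdots \circ h_{1}$ (possibly after one further application of $\rho _{0}$ in case~\ref{mainthjbnq2}), which is therefore itself a John domain. But $h$ is (semi-)hyperbolic, so $F_{\infty }(h)$ is a John domain as well, and by the characterization cited from \cite[Theorem~9.3]{NV}, $J(h)$ would then be a quasicircle—contradicting the hypothesis on $(h_{1},\ldots ,h_{m})$.

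The main obstacle is the forward propagation of the John constant under $f_{\g ,n_{k}}$. Forward images of John domains are not in general John, so this step must use in an essential way both the Jordan-curve structure of the fiberwise Julia sets along $\sigma ^{n_{k}}(\g )$ and fine distortion estimates available from the skew-product's (semi-)hyperbolicity. This is also the reason the statement has to prepend $\rho _{0}\in \G \setminus \G _{\min }$ to $\alpha $ in the semi-hyperbolic case: the single bounded-degree inverse branch it supplies at the initial step of the pullback is automatic in the hyperbolic case via uniform expansion, but must be inserted by hand when only bounded-degree control on inverse branches is available.
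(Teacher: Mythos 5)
There is a genuine gap, and it sits exactly where you locate ``the core of the argument.'' Your contradiction scheme ends by applying the criterion of \cite[Theorem 9.3]{NV} to $J(h)$, but that criterion is a statement about \emph{Jordan curves}, and under the hypotheses of the theorem $J(h)$ is never a Jordan curve: since $h=h_{m}\circ \cdots \circ h_{1}$ is hyperbolic (resp.\ semi-hyperbolic with an attracting cycle in $K(h)$), if $J(h)$ were a Jordan curve it would automatically be a quasicircle, contradicting the hypothesis. So from ``some bounded Fatou component of $h$ is a John domain and $F_{\infty }(h)$ is a John domain'' you cannot conclude that $J(h)$ is a quasicircle, and in fact no contradiction is available along this route at all: for a hyperbolic polynomial such as $h(z)=z^{2}-1$, every Fatou component (bounded or not) is a John domain, yet $J(h)$ is not a quasicircle because it is not a Jordan curve. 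Hence the assumed Johnness of $U_{\g }$, even if it could be propagated forward with uniform constants and passed to a kernel limit (itself a nontrivial issue, since the degrees of $f_{\g ,n_{k}}\colon U_{\g }\to U_{\sigma ^{n_{k}}(\g )}$ are unbounded), lands on a statement that is simply true, not absurd. Separately, note that your three-part plan silently drops the assertion that $J_{\g }(f)$ is not a quasicircle; it would follow from ``Jordan curve $+$ $A_{\g }$ John $+$ $U_{\g }$ not John'' via \cite[Theorem 9.3]{NV}, but only if the non-Johnness step were sound.

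The missing idea is to attack the quasicircle property of $J_{\g }(f)$ directly rather than the Johnness of $U_{\g }$. The paper's route is: $J_{\g }(f)$ is a Jordan curve by Theorem~\ref{mainth3-2}; $J(h)$, being connected, locally connected and \emph{not} a Jordan curve, admits pairs of external rays landing at a common point which cut off arbitrarily small pieces of $K(h)$ while both complementary pieces still meet $J(h)$ (Lemma~\ref{jbnqlem1}; Lemma~\ref{060414astlem1} is the variant with $\rho _{0}$ prepended, which is exactly where $\rho _{0}\in \G \setminus \G _{\min }$ is used, to place the pinched piece inside $\rho _{0}^{-1}(J(G))\subset \CC \setminus P(G)$ in the semi-hyperbolic case). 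This pinching structure is transferred to $J_{\g }(f)$ along the subsequence $\sigma ^{n_{k}}(\g )\rightarrow \alpha $ (resp.\ $\beta $) by \cite[Lemma 4.5]{SdpbpIII}, yielding a violation of the bounded-turning property, so $J_{\g }(f)$ is not a quasicircle. Then the Johnness of $A_{\g }(f)$ is quoted from \cite[Theorem 1.12]{S4} (your sketch of this step is consistent with that), and finally \cite[Theorem 9.3]{NV} is applied to the Jordan curve $J_{\g }(f)$ --- where it is legitimate --- to conclude that $U_{\g }$ cannot be a John domain. In short, non-quasicircularity is the input and non-Johnness of $U_{\g }$ the output; your proposal tries to run the implication in the opposite direction and the contradiction it needs does not exist.
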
 
We now classify hyperbolic two-generator polynomial semigroups in ${\cal G}_{dis}.$ 
Moreover, we completely classify the fiberwise Julia sets $J_{\g }(f)$ in terms of the information on $\g .$ 
\begin{thm}
\label{t:tghyp}
Let $\G =\{ h_{1},h_{2}\} \subset \mbox{{\em Poly}}_{\deg \geq 2}.$ Let $G=\langle h_{1},h_{2}\rangle .$ 
Suppose $G\in {\cal G}_{dis}$ and that $G$ is hyperbolic. 
Let $f:\GN \times \CCI \rightarrow \GN \times \CCI $ be the skew product associated with $\G .$ 
Then, 
for each connected component $J$ of $J(G)$, there exists a unique $\g \in \GN $ such that 
$J=J_{\g }(f).$ Moreover, exactly one of the following statements 1, 2 holds.
\begin{enumerate}
\item There exists a constant $K\geq 1$ such that for each $\g \in \GN $, $J_{\g }(f)$ is a $K$-quasicircle. 
\item There exists a unique $j\in \{ 1,2\} $ such that $J(h_{j})$ is not a Jordan curve. In this case, 
for each $\g =(\g _{1},\g _{2},\ldots )\in \GN $, exactly one of the following statements {\em (a),(b), (c)} holds. 
\begin{itemize}
\item[{\em (a)}] There exists a $p\in \NN $ such that for each 
$l\in \NN $, at least one of $\g _{l+1},\ldots ,\g _{l+p}$ is not equal to $h_{j}.$ 
Moreover, $J_{\g }(f)$ is a quasicircle. 
\item[{\em (b)}] $\sharp \{ n\in \NN \mid \g _{n}\neq h_{j}\} =\infty $ and 
there exists a strictly increasing sequence $\{ n_{k}\} _{k\in \NN }$ in $\NN $ such that 
$\sigma ^{n_{k}}(\g )\rightarrow (h_{j},h_{j},h_{j},\ldots )$ as $k\rightarrow \infty .$ 
Moreover, $J_{\g }(f)$ is a Jordan curve but not a quasicircle, the unbounded component 
$A_{\g }(f)$ of $\CCI \setminus J_{\g }(f)$ is a John domain, and the bounded component of 
$\CC \setminus J_{\g }(f)$ is not a John domain. 
\item[{\em (c)}] There exists an $l\in \NN $ such that 
$\sigma ^{l}(\g )=(h_{j},h_{j},h_{j},\ldots )$. 
Moreover, $J_{\g }(f)$ is not a Jordan curve.  
\end{itemize}

\end{enumerate}

\end{thm}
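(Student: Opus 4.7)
The plan is to establish, in order: the bijection $\g \mapsto J_{\g}(f)$ between $\GN $ and the connected components of $J(G)$; the dichotomy between alternatives 1 and 2; and, within alternative 2, that (a), (b), (c) partition $\GN $ and that each implies the asserted geometry of $J_{\g}(f)$. Hyperbolicity ($P(G)\subset F(G)$) together with compactness of $\G $ is used everywhere: it makes every preimage $f_{\g ,n}^{-1}$ of a connected component of $J(G)$ an unramified covering and gives uniform backward contraction on a neighborhood of $J(G)$.

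For the bijection, $J_{\g}(f)$ is a connected subset of $J(G)$ realizing a single component. On sequences in $R(\G ,\G \setminus \G _{\min })$, Theorem~\ref{mainth3}(\ref{mainth3-1-1}) identifies $J_{\g}(f)=\partial A_{\g}(f)=\partial U_{\g}$ as the connected common boundary of two simply connected Fatou components; on the remaining sequences $\sigma ^{l}\g $ is eventually constant along one generator, and iterating Theorem~\ref{mainth1}(\ref{mainth1-3}) together with the hyperbolic structure realizes $J_{\g}(f)$ as a connected iterated polynomial preimage of a component. Injectivity and surjectivity follow from the disjointness of the two pull-back pieces $h_{1}^{-1}(J(G))$ and $h_{2}^{-1}(J(G))$, together with tracking, for any component $J$, the unique generator whose pull-back branch captures $J$ at each step.

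For the dichotomy: if both $J(h_{1})$ and $J(h_{2})$ are Jordan curves, then by hyperbolicity they are quasicircles, and the uniform fiberwise quasiconformal surgery of \cite{SdpbpIII} over the compact base $\GN $ produces a quasiconformal conjugacy of each $J_{\g}(f)$ to a round circle with dilatation bounded uniformly in $\g $, giving alternative 1; conversely, alternative 1 forces $J(h_{i})=J_{(h_{i},\ldots )}(f)$ to be a quasicircle. In particular under alternative 2 at most one of $J(h_{1}), J(h_{2})$ is non-Jordan, for otherwise $\g =(h_{3-j})^{\NN }$, for either choice of $j$, would fall in clause (a) below while $J_{\g}(f)=J(h_{3-j})$ is non-Jordan, contradicting (a). Fix $j$ with $J(h_{j})$ not Jordan and write $l=3-j$; then $J(h_{l})$ is Jordan and hence a quasicircle. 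That (a), (b), (c) partition $\GN $ is a short combinatorial check: if (a) and (c) both fail, the left endpoints of ever-longer maximal $h_{j}$-runs give $n_{k}$ with $\sigma ^{n_{k}}\g \to (h_{j})^{\NN }$, placing $\g $ in (b). In case (c), $J_{\g}(f)=f_{\g ,l_{0}}^{-1}(J(h_{j}))$ with no critical value of $f_{\g ,l_{0}}$ on $J(h_{j})$; were $J_{\g}(f)$ Jordan, $J(h_{j})$ would be its image under an unramified covering of $S^{1}$, hence itself Jordan, contradiction. In case (b), $\g $ has infinitely many $h_{j}$ (forced by $\sigma ^{n_{k}}\g \to (h_{j})^{\NN }$) and infinitely many $h_{l}$; since $\G \setminus \G _{\min }\neq \emptyset $ by Remark~\ref{jminrem} and $|\G |=2$, at least one of $h_{j}, h_{l}$ lies in $\G \setminus \G _{\min }$, so $\g \in R(\G ,\G \setminus \G _{\min })$, and Theorem~\ref{mainthjbnq}(\ref{mainthjbnq1}) applied with $m=1$ and $\alpha =(h_{j})^{\NN }$ delivers the Jordan-but-not-quasicircle and John/non-John conclusions.

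In case (a), if $h_{l}\in \G \setminus \G _{\min }$, then $\g \in W_{\{h_{l}\},p}$ and Theorem~\ref{mainth3}(\ref{mainth3-3}) gives $J_{\g}(f)$ as a uniform $K_{\{h_{l}\},p}$-quasicircle. If instead $h_{l}\in \G _{\min }$ (so $\G \setminus \G _{\min }=\{h_{j}\}$), then either $\g $ is eventually $h_{l}$, and $J_{\g}(f)$ is an unramified polynomial preimage of the quasicircle $J(h_{l})$ and hence itself a quasicircle; or $\g $ has infinitely many $h_{j}$ and lies in $R(\G ,\G \setminus \G _{\min })$, in which case I approximate $\g $ by periodic sequences falling under the preceding subcase and use continuity of $\omega \mapsto J_{\omega }(f)$ at $\g $ from Theorem~\ref{mainth3}(\ref{mainth3-1-3}) to transfer a uniform quasicircle bound to $J_{\g}(f)$. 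The principal obstacle is exactly this last subcase: Theorem~\ref{mainth3}(\ref{mainth3-3}) does not apply once $h_{l}\in \G _{\min }$, and the uniform quasicircle bound has to be assembled from fiberwise continuity, a periodic approximation engineered so the limit inherits uniform quasicircularity, and the non-coexistence of two non-Jordan generators established in the dichotomy.
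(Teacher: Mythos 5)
The most concrete problem is in your treatment of case (a). Under alternative 2 the non-Jordan generator $h_{j}$ necessarily lies in $\G _{\min }$: otherwise the constant sequence $(h_{j},h_{j},\ldots )$ belongs to $W_{\{ h_{j}\} ,1}$ and Theorem~\ref{mainth3}-\ref{mainth3-3} would make $J(h_{j})=J_{(h_{j},h_{j},\ldots )}(f)$ a quasicircle. Hence $h_{l}=h_{3-j}\in \G \setminus \G _{\min }$ (Remark~\ref{jminrem}), your subcase ``$h_{l}\in \G _{\min }$'' is vacuous, and case (a) is settled by Theorem~\ref{mainth3}-\ref{mainth3-3} with $S=\{ h_{l}\} $ alone; this is exactly the paper's observation that $J_{\min }(G)=J(h_{j})$. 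What you call the ``principal obstacle'' therefore does not exist, and the workaround you build for it is itself unsound: Hausdorff continuity of $\omega \mapsto J_{\omega }(f)$ at $\g $ cannot transfer quasicircularity to the limit fibre, because the approximating eventually-$h_{l}$ fibres are only shown to be quasicircles with no uniform dilatation bound, and a Hausdorff limit of quasicircles with unbounded dilatation need not even be a Jordan curve. So that step, as written, fails --- fortunately only in a case that cannot occur.

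Two further key ingredients are asserted rather than proved. First, the uniqueness of $\g $ with $J=J_{\g }(f)$ rests on $h_{1}^{-1}(J(G))\cap h_{2}^{-1}(J(G))=\emptyset $, which you simply use; this is not a formal consequence of hyperbolicity plus disconnectedness but a substantive theorem (the paper invokes \cite[Theorem 3.17]{S7} for it, and then \cite[Lemma 3.5.2, Lemma 3.6]{SdpbpIII} together with \cite[Theorem 2.14(2)]{S1} to obtain the disjoint decomposition $J(G)=\amalg _{\g \in \GN }J_{\g }(f)$). Second, for alternative 1 you appeal to ``uniform fiberwise quasiconformal surgery over $\GN $'' directly, but the surgery requires knowing that every fibre Julia set is a Jordan curve (equivalently, that each fibre has exactly one bounded Fatou component), and hyperbolicity of the skew product alone cannot give this: in alternative 2 the same skew product is hyperbolic, yet the fibres in clause (c) are not Jordan curves. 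The missing input, supplied in the paper, is that when $J_{\min }(G)$ is the Julia set of a generator and is a Jordan curve, one gets $\mbox{int}(K(h_{1}))=\mbox{int}(\hat{K}(G))$ (by \cite[Theorem 2.20.5(b)]{SdpbpI}), so that $P^{\ast }(G)$ lies in a single connected component of $\mbox{int}(\hat{K}(G))$, after which \cite[Proposition 2.25]{SdpbpIII} yields the uniform constant $K$. Your remaining points --- the combinatorial trichotomy, case (b) via Theorem~\ref{mainthjbnq}-\ref{mainthjbnq1} with $m=1$, and the unramified-covering argument for case (c) (where the paper instead quotes \cite[Lemma 4.4]{SdpbpIII}) --- are essentially sound.
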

\subsection{Random dynamics of polynomials}
\label{random}
In this section, we present some results on 
the random dynamics of polynomials. 
 The proofs are given in Section~\ref{Proofs of random}.
 
 Let $\tau $ be a Borel probability measure on Poly$_{\deg \geq 2}.$  
We consider the i.i.d. random dynamics on $\CCI $ such that,  
at every step, we choose a polynomial map 
$h:\CCI \rightarrow \CCI $ according to the distribution $\tau .$ 
(Hence, this defines a kind of Markov process on $\CCI $ such that,  
at every step, the transition probability $p(x,A)$ 
from a point $x\in \CCI $ to a Borel subset $A$ of $\CCI $ is equal to $\tau (\{ h\in \mbox{Poly}_{\deg \geq 2}\mid 
h(x)\in A\} )$.) \\ 
{\bf Notation:} 
For a Borel probability measure $\tau $ on  Poly$_{\deg \geq 2}$, 
we denote by $\G_{\tau }$ the topological support of $\tau $ on 
Poly$_{\deg \geq 2}.$ (Hence, $\G_{\tau }$ is a closed set in 
Poly$_{\deg \geq 2}.$) 
Moreover, we denote by $\tilde{\tau } $ the infinite product measure $\otimes _{j=1}^{\infty } \tau .$  
This is a Borel probability measure on 
$\G_{\tau }^{\NN }.$ 
\begin{df}
Let $X$ be a complete metric space. 
A subset $A$ of $X$ is said to be residual if 
$X\setminus A$ is a countable union of nowhere dense subsets of 
$X.$ Note that by Baire Category Theorem, a residual set $A$ is dense in $X.$ 
\end{df}
\begin{cor}\label{rancor1}
(Corollary of Theorem~\ref{mainth3}-\ref{mainth3-1})
Let $\G $ be a non-empty compact subset of 
{\em Poly}$_{\deg \geq 2}.$ 
Let 
$f:\G ^{\NN }\times \CCI \rightarrow 
\G ^{\NN }\times \CCI $ be the skew product 
associated with the family $\G $ of polynomials.
Let $G$ be the polynomial semigroup generated by 
$\G .$ 
Suppose $G\in {\cal G}_{dis}.$ 
Then, there exists a residual subset ${\cal U}$ of $\G ^{\NN }$  
such that for each Borel probability measure 
$\tau $ on {\em Poly}$_{\deg \geq 2}$ with $\G _{\tau }=\G $, 
we have  
 $\tilde{\tau }({\cal U})=1$, and such that 
each $\g \in {\cal U}$ satisfies   
all of the following.
\begin{enumerate}
 \item \label{rancor1-1}
 There exists exactly one bounded component $U_{\g }$ 
 of $F_{\g }(f).$
  Furthermore,
 $\partial U_{\g }=\partial A_{\gamma }(f)=J_{\g }(f).$ 
 
 \item \label{rancor1-2}
 Each limit function of 
 $\{ f_{\g ,n} \} _{n}$ in 
 $U_{\g }$ is constant. Moreover, 
 for each $y\in U_{\g }$,  there exists a number 
 $n\in \NN $ such that $f_{\g ,n}(y)
 \in $ {\em int}$(\hat{K}(G)).$
 
 \item \label{rancor1-3}
 We have $\hat{J}_{\g }(f)=J_{\g }(f).$ Moreover, 
 the map $\omega \mapsto J_{\omega }(f)$ defined on 
 $\GN $ is continuous at 
 $\g $, with respect to the Hausdorff metric in the space of 
 non-empty compact subsets of $\CCI .$ 
 \item \label{rancor1-4}
The 2-dimensional Lebesgue measure of 
 $\hat{J}_{\g }(f)=J_{\g }(f)$
 is equal to zero.

\end{enumerate}

\end{cor}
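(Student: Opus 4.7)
The plan is to exhibit one single subset $\mathcal{U}\subset\GN$ which is simultaneously residual in $\GN$ and of full $\tilde{\tau}$-measure for every admissible $\tau$, and which lies inside $R(\G,S)$ for a compact $S\subset\G\setminus\G_{\min}$, so that Theorem~\ref{mainth3}-\ref{mainth3-1} (together with Theorem~\ref{mainth3}-\ref{mainth3-0}) supplies all four conclusions uniformly on $\mathcal{U}$. By Remark~\ref{jminrem}, $\G_{\min}$ is a compact proper subset of $\G$; in particular $\G\setminus\G_{\min}$ is a non-empty relatively open subset of $\G$.

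First I would fix any $h_{0}\in\G\setminus\G_{\min}$ and use the compactness of $\G_{\min}$ to choose an open neighborhood $W$ of $h_{0}$ in $\G$ whose closure $S:=\overline{W}$ (taken in $\G$) is disjoint from $\G_{\min}$. Then $S$ is a non-empty compact subset of $\G\setminus\G_{\min}$ and $W$ is a non-empty open subset of $\G$. Define
\[
\mathcal{U}:=R(\G,W)=\Bigl\{\g\in\GN\,\Big|\,\sharp(\{n\in\NN\mid \g_{n}\in W\})=\infty\Bigr\}.
\]

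Next I would verify the two properties of $\mathcal{U}$. For residuality, observe that for every $N\in\NN$ the set $U_{N}:=\{\g\in\GN\mid\exists\,n\geq N,\ \g_{n}\in W\}$ is open in $\GN$ (as $W$ is open) and dense in $\GN$ (since $W$ is non-empty, any $\g$ can be perturbed in a single coordinate $n\geq N$ to lie in $U_{N}$), so $\mathcal{U}=\bigcap_{N}U_{N}$ is a dense $G_{\delta }$, hence residual. For the measure statement, fix any Borel probability measure $\tau $ on Poly$_{\deg\geq 2}$ with $\G_{\tau }=\G$; since $W$ is open in $\G $ and meets $\G_{\tau }=\G$, we have $p:=\tau(W)>0$. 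Under $\tilde{\tau}=\bigotimes_{j=1}^{\infty}\tau $ the events $E_{n}:=\{\g\in\GN\mid\g_{n}\in W\}$ are independent with $\tilde{\tau}(E_{n})=p>0$, so $\sum_{n}\tilde{\tau}(E_{n})=\infty$ and the second Borel--Cantelli lemma yields $\tilde{\tau}(\mathcal{U})=\tilde{\tau}(\limsup_{n}E_{n})=1$.

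Finally, since $\mathcal{U}=R(\G,W)\subset R(\G,S)\subset R(\G,\G\setminus\G_{\min})$, Theorem~\ref{mainth3}-\ref{mainth3-1} applied to the compact set $S$ delivers, for every $\g\in\mathcal{U}$, conclusions \ref{rancor1-1}, \ref{rancor1-3}, \ref{rancor1-4}, together with the assertion that for each $y\in U_{\g }$ some iterate $f_{\g,n}(y)$ lies in $\mbox{int}(\hat{K}(G))$; Theorem~\ref{mainth3}-\ref{mainth3-0} in turn gives that every limit function of $\{f_{\g,n}\}$ on any component of $F_{\g }(f)$, in particular on $U_{\g }$, is constant. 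This completes conclusion \ref{rancor1-2} as well. The only delicate point in the plan is producing the separating open set $W$ whose closure avoids $\G_{\min}$; this is precisely where the compactness of $\G_{\min}$ provided by Remark~\ref{jminrem} is used, and after that step everything reduces to Baire category and Borel--Cantelli applied through the already established Theorem~\ref{mainth3}.
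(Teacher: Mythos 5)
Your proof is correct and follows essentially the same route as the paper: the paper likewise uses Remark~\ref{jminrem} to pick a compact $S\subset \G\setminus \G_{\min}$ with non-empty interior in $\G$, sets ${\cal U}=R(\G ,S)$, notes (without detail) that this set is residual and of full $\tilde{\tau }$-measure, and then invokes Theorem~\ref{mainth3}-\ref{mainth3-0} and Theorem~\ref{mainth3}-\ref{mainth3-1}. Your only deviations are cosmetic: you take ${\cal U}=R(\G ,W)$ with $W=\mbox{int}(S)$ and you write out the Baire category and Borel--Cantelli arguments that the paper leaves as ``easy to see.''
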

\begin{cor}
\label{c:rancor2sh}
(Corollary of Theorems~\ref{mainth3-2}, \ref{mainthjbnq})
Let $\G $ be a non-empty compact subset of 
{\em Poly}$_{\deg \geq 2}.$ 
Let 
$f:\G ^{\NN }\times \CCI \rightarrow 
\G ^{\NN }\times \CCI $ be the skew product 
associated with the family $\G $ of polynomials.
Let $G$ be the polynomial semigroup generated by 
$\G .$ 
Suppose $G\in {\cal G}_{dis}$ and $G$ is semi-hyperbolic. 
Then, we have both of the following.
\begin{enumerate}
\item \label{c:rancor2sh1}
There exists a residual subset ${\cal U}$ of $\G ^{\NN }$  
such that, for each Borel probability measure 
$\tau $ on {\em Poly}$_{\deg \geq 2}$ with $\G _{\tau }=\G $, 
we have  
 $\tilde{\tau }({\cal U})=1$, and such that,  
for each $\g \in {\cal U}$ and for each point $y_{0}\in \mbox{{\em int}}(K_{\g }(f))$,  
$J_{\g }$ is a Jordan curve and there exist an $n\in \NN $ with $f_{\g, n}(y_{0})\in \mbox{{\em int}}(\hat{K}(G)).$  
\item \label{c:rancor2sh2}
Suppose further that there exists an element $h\in G$ such that 
$J(h)$ is not a quasicircle. 
Then, there exists a residual subset ${\cal V}$ of $\G ^{\NN }$  
such that, for each Borel probability measure 
$\tau $ on {\em Poly}$_{\deg \geq 2}$ with $\G _{\tau }=\G $, 
we have  
 $\tilde{\tau }({\cal V})=1$, and such that,  
for each $\g \in {\cal V}$,     
$J_{\g }$ is a Jordan curve but not a quasicircle, the unbounded component of 
$\CCI \setminus J_{\g }$ is a John domain and the bounded component of 
$\CC \setminus J_{\g }$ is not a John domain. 
\end{enumerate} 
\end{cor}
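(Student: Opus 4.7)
The overall strategy is to package Theorems~\ref{mainth3-2} and \ref{mainthjbnq}-\ref{mainthjbnq2} into a measure-theoretic / Baire-category statement. The dynamical content is already contained in those two theorems, so the real work is to produce, in each part, a single subset of $\GN $ depending only on $\G $ (and on the finite combinatorial data needed to invoke the applicable theorem) that is simultaneously residual and of full $\tilde{\tau }$-measure for every $\tau $ with $\G _{\tau }=\G $. Two elementary facts drive the proof: (i) by Remark~\ref{jminrem}, $\G _{\min }$ is a closed proper subset of $\G $, so $\G \setminus \G _{\min }$ is non-empty and relatively open in $\G $; (ii) $\G _{\tau }=\G $ means $\mathrm{supp}(\tau )=\G $, so $\tau (V)>0$ for every non-empty relatively open $V\subset \G $. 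Fact (i) gives density of the cylinder sets we need, and fact (ii) combined with the product structure of $\tilde{\tau }$ delivers the second Borel--Cantelli lemma.

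For statement \ref{c:rancor2sh1}, put ${\cal U}:=R(\G ,\G \setminus \G _{\min })$. Writing
\[
{\cal U}=\bigcap _{N\in \NN }\bigcup _{n\geq N}\{\g \in \GN :\g _{n}\in \G \setminus \G _{\min }\},
\]
each $\{\g _{n}\in \G \setminus \G _{\min }\}$ is open by (i); the inner unions are dense because any prescribed finite prefix can be extended to contain a coordinate in $\G \setminus \G _{\min }$ at some index $\geq N$. Hence ${\cal U}$ is a dense $G_{\delta }$, so residual. The coordinate events are $\tilde{\tau }$-independent with common positive probability $\tau (\G \setminus \G _{\min })>0$ by (ii), so the second Borel--Cantelli lemma gives $\tilde{\tau }({\cal U})=1$. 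Every $\g \in {\cal U}$ lies in $R(\G ,\G \setminus \G _{\min })$, so Theorem~\ref{mainth3-2} applies and yields both the Jordan-curve conclusion for $J_{\g }(f)$ and the interior-hitting property for each $y_{0}\in \mathrm{int}(K_{\g }(f))$.

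For statement \ref{c:rancor2sh2}, first extract $h=h_{m}\circ \cdots \circ h_{1}$ with $(h_{1},\ldots ,h_{m})\in \G ^{m}$ and $J(h)$ not a quasicircle (possible since $h\in G$ is a finite composition of elements of $\G $), fix any $\rho _{0}\in \G \setminus \G _{\min }$, and form the sequences $\alpha $ and $\beta =(\rho _{0},\alpha _{1},\alpha _{2},\ldots )\in \GN $ exactly as in Theorem~\ref{mainthjbnq}. Set
\[
{\cal V}:={\cal U}\cap \bigcap _{m,N\in \NN }\bigcup _{n\geq N}\{\g \in \GN :d(\sigma ^{n}(\g ),\beta )<1/m\},
\]
which is a countable intersection of dense open sets (density is again obtained by concatenating a given prefix with coordinates approximating $\beta $), and hence residual. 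For each basic cylinder neighborhood $C_{k}=\{\omega :\omega _{1}\in V_{1},\ldots ,\omega _{k}\in V_{k}\}$ of $\beta $, restricting to indices $n=jk$ ($j\in \NN $) gives $\tilde{\tau }$-independent events $\{\sigma ^{n}(\g )\in C_{k}\}$ of common probability $\prod _{i=1}^{k}\tau (V_{i})>0$ by (ii); second Borel--Cantelli yields infinitely many such $n$, and intersecting over a countable base of neighborhoods of $\beta $ gives $\tilde{\tau }\{\g :\beta \in \omega _{\sigma }(\g )\}=1$. Combined with $\tilde{\tau }({\cal U})=1$ this yields $\tilde{\tau }({\cal V})=1$. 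Theorem~\ref{mainthjbnq}-\ref{mainthjbnq2} then applies to every $\g \in {\cal V}$; translating $A_{\g }(f)$ and $U_{\g }$ into the unbounded component of $\CCI \setminus J_{\g }$ and the bounded component of $\CC \setminus J_{\g }$, respectively, is immediate from Theorem~\ref{mainth3}-\ref{mainth3-1-1}.

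The only genuine obstacle is producing a \emph{single} exceptional set that works for \emph{every} admissible $\tau $; this is resolved by making ${\cal U}$ and ${\cal V}$ depend only on the combinatorial data $(\G ,(h_{1},\ldots ,h_{m}),\rho _{0})$ and by invoking the support hypothesis $\G _{\tau }=\G $ uniformly through fact (ii). All substantive dynamical work has already been absorbed into Theorems~\ref{mainth3-2} and \ref{mainthjbnq}.
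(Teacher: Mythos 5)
Your proposal is correct and follows essentially the same route as the paper: you take ${\cal U}=R(\G ,\G \setminus \G _{\min })$ and ${\cal V}=\{ \g \in R(\G ,\G \setminus \G _{\min })\mid \exists (n_{k}),\ \sigma ^{n_{k}}(\g )\rightarrow \beta \} $ (the paper's ${\cal V}$ exactly, and a harmless enlargement of its ${\cal U}$) and then quote Theorem~\ref{mainth3-2} and Theorem~\ref{mainthjbnq}-\ref{mainthjbnq2}. The only difference is that you spell out the residuality and full-measure claims (openness of $\G \setminus \G _{\min }$ via Remark~\ref{jminrem}, density of the shifted cylinder events, and the second Borel--Cantelli lemma using $\G _{\tau }=\G $), which the paper leaves as ``easy to see''; these details are correct.
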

\begin{rem}
\label{r:jbnq}
Let $h\in $ Poly$_{\deg \geq 2}$.  
Suppose that $J(h)$ is a Jordan curve but not a quasicircle. 
Then, it is easy to see that there exists a parabolic fixed point 
of $h$ in $\CC $ and the bounded connected component $U$ of $F(h)$ is the 
immediate parabolic basin. (In fact, 
we have $h^{-1}(U)=U=h(U)$ and $U$ is the immediate basin of 
either attracting or parabolic fixed point of $h.$ 
If $U$ is the immediate basin of an attracting fixed point of $h$, 
then by using quasiconformal surgery (e.g. \cite[Theorem 4.1]{SdpbpIII}), 
we obtain that $J(h)$ is a quasicircle. However, this is a contradiction.)  
Hence, $\langle h\rangle $ is not semi-hyperbolic (see \cite{CJY}). 
Moreover, by \cite{CJY}, 
$F_{\infty }(h)$ is not a John domain. 

 Thus what we see in Theorem~\ref{mainthjbnq} and statement \ref{c:rancor2sh2} of Corollary~\ref{c:rancor2sh}, 
 as illustrated in Example ~\ref{jbnqexfirst}, is a new and unexpected  
 phenomenon which can hold in the {\em random} dynamics 
 of a family of polynomials, but cannot hold in the usual 
 iteration dynamics of a single polynomial. 
 Namely, it can hold that for almost every $\g \in \GN $, 
$J_{\g }(f)$ is a Jordan curve and fails to be a quasicircle 
 while the basin of infinity $A_{\g }(f)$ is  a John domain. 
Whereas, if $J(h)$, for some polynomial $h$, is a Jordan curve which 
fails to be a quasicircle, then the basin of infinity $F_{\infty }(h)$ 
is necessarily {\bf not} a John domain.   

 Pilgrim and Tan Lei (\cite{PT}) showed that there exists a  
hyperbolic rational map $h$ with {\em disconnected} Julia set such that  
``almost every'' connected component of $J(h)$ 
is a Jordan curve but not a quasicircle.  
\end{rem}
\subsection{Examples}
\label{Const}
We give some  
 examples of semigroups $G$ in ${\cal G}_{dis}.$ The following proposition was proved in \cite{SdpbpI}. 
\begin{prop}[\cite{SdpbpI}] 
\label{Constprop}
Let $G$ be a 
polynomial semigroup generated by 
a compact subset $\G $ of {\em Poly}$_{\deg \geq 2}.$ 
Suppose that $G\in {\cal G}$ and  
{\em int}$(\hat{K}(G))\neq \emptyset .$ 
Let $b\in $ {\em int}$(\hat{K}(G)).$ 
Moreover, let $d\in \NN $ be any positive integer such that 
$d\geq 2$, and such that 
$(d, \deg (h))\neq (2,2)$ for each $h\in \G .$ 
Then, there exists a number $c>0$ such that,  
for each $a\in \CC $ with $0<|a|<c$, 
there exists a compact neighborhood $V$ of 
$g_{a}(z)=a(z-b)^{d}+b$ in {\em Poly}$_{\deg \geq 2}$ satisfying 
that, for any non-empty subset $V'$ of $V$,  
the polynomial semigroup 
$H_{\G, V'} $ generated by the family $\G \cup V'$ 
belongs to ${\cal G}_{dis}$, $\hat{K}(H_{\G, V'})=\hat{K}(G)$ 
 and $(\G \cup V')_{\min }\subset \G .$ 
Moreover, in addition to the assumption above, 
if $G$ is semi-hyperbolic (resp. hyperbolic), 
then the above $H_{\G, V'}$ is semi-hyperbolic (resp. hyperbolic).   
\end{prop}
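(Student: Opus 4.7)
My plan has four pieces carried out in order: (1) choose the constant $c$ and a compact neighborhood $V$ of $g_a$ in $\mathrm{Poly}_{\deg=d}$ with strong dynamical control; (2) deduce $H:=H_{\G,V'}\in{\cal G}$ and $\hat K(H)=\hat K(G)$; (3) show $J(H)$ is disconnected, every $J(g)$ for $g\in V'$ lies in $J_{\max}(H)$, and hence $(\G\cup V')_{\min}\subset\G$; (4) propagate (semi-)hyperbolicity from $G$ to $H$. Step (3) is where the real difficulty lies.

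For step (1), since $b\in\mathrm{int}(\hat K(G))$ and $\hat K(G)\cup J(G)$ is bounded in $\CC$, I pick $r_1>0$ with $\overline{B(b,r_1)}\subset\mathrm{int}(\hat K(G))$ and $R_0>r_1$ with $\hat K(G)\cup J(G)\subset B(b,R_0)$. Set $R=2R_0$ and $c=r_1 R^{-d}$, shrinking $c$ further so that $|a|^{-1/(d-1)}>R$ for all $|a|<c$. A direct computation shows that for $0<|a|<c$ the map $g_a(z)=a(z-b)^d+b$ is a hyperbolic polynomial with $g_a(\overline{B(b,R)})\subset\overline{B(b,r_1)}\subset\mathrm{int}(\hat K(G))$, $CV^*(g_a)=\{b\}$, and $J(g_a)=\{|z-b|=|a|^{-1/(d-1)}\}\subset\{|z-b|>R\}$. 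The continuity of $g\mapsto g(\overline{B(b,R)})$, of critical values, and of Julia sets under hyperbolic perturbation within $\mathrm{Poly}_{\deg=d}$ then provides a compact neighborhood $V$ of $g_a$ in $\mathrm{Poly}_{\deg=d}$ on which every $g\in V$ satisfies (i) $g(\overline{B(b,R)})\subset\mathrm{int}(\hat K(G))$, (ii) $J(g)\subset\{|z-b|>R\}$, and (iii) $CV^*(g)\subset\mathrm{int}(\hat K(G))$.

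Fix any non-empty $V'\subset V$. For step (2), property (i) and the definition of $\hat K(G)$ imply each generator of $H$ preserves $\hat K(G)$, so $\hat K(G)$ is forward $H$-invariant and $\hat K(G)\subset\hat K(H)$; the reverse inclusion comes from $H\supset G$. Remark~\ref{pcbrem}, combined with (iii) and $\G\subset G\in{\cal G}$, then yields $P^*(H)\subset\hat K(G)$, which is bounded; hence $H\in{\cal G}$. For step (3), we have $J(G)\subset B(b,R_0)\subset J(H)$ and $\bigcup_{g\in V'}J(g)\subset\{|z-b|>R\}\subset J(H)$, so $J(H)$ has pieces in two disjoint regions separated by the annulus $A=\{R_0<|z-b|<R\}$. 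To force disconnectedness, I would exhibit a Jordan curve $\gamma\subset F(H)\cap A$ enclosing $J(G)$ via a normal-family argument on a compact sub-annulus $A'\subset A$: every $g\in V'$ throws $\overline{A'}\subset\overline{B(b,R)}$ into $\mathrm{int}(\hat K(G))\subset F(H)$ by (i), a strong contraction deep into the Fatou set, while for pure $\G$-compositions $A'\subset F(G)$ avoids $J(G)$ and the orbit escapes into the basin of infinity; Montel applied with orbits landing either in the bounded set $\hat K(G)$ or in the basin of infinity yields normality on $A'$. The resulting $\gamma$ separates $J(G)$ from $\bigcup_{g\in V'}J(g)$, forcing $H\in{\cal G}_{dis}$. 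Since $J_{\min}(H)$ lies strictly inside $\gamma$ (hence inside $B(b,R_0)$) while every $J(g)$ for $g\in V'$ lies outside $\gamma$, we also obtain $(\G\cup V')_{\min}\subset\G$.

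For step (4), if $G$ is hyperbolic then $P^*(G)\subset F(G)\cap\hat K(G)=\mathrm{int}(\hat K(G))$; combined with (iii) and the forward-invariance of $\mathrm{int}(\hat K(G))$ (the generators being open maps preserving $\hat K(G)$), we get $P^*(H)\subset\mathrm{int}(\hat K(G))$. Montel applied to families with orbits in the bounded set $\hat K(G)$ yields $\mathrm{int}(\hat K(G))\subset F(H)$, so $P(H)\subset F(H)$ and $H$ is hyperbolic. The semi-hyperbolic case is analogous: critical values of every $g\in V$ lie in $\overline{B(b,r_1)}$ at positive distance from $J(H)$, so a comparison of preimage-degree data shows the $(\delta,N)$-constants of $G$ propagate to $H$ after possibly shrinking $\delta$. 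The main obstacle is step (3), i.e., producing the separating Jordan curve $\gamma\subset F(H)\cap A$ rigorously for mixed compositions of $\G$- and $V'$-maps; the hypothesis $(d,\deg h)\neq(2,2)$ for $h\in\G$ is needed precisely here, to rule out a degenerate resonance that could produce Julia points inside the annulus and spoil the separation.
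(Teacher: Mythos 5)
The paper itself offers no proof of Proposition~\ref{Constprop} --- it is imported verbatim from \cite{SdpbpI} --- so your argument has to stand on its own, and it does not: the gap is exactly where you locate it, in step (3), and the method you propose there fails rather than merely being incomplete. The claim that some fixed compact sub-annulus $A'\subset A=\{R_{0}<|z-b|<R\}$ lies in $F(H)$ is false in general. Since $J(g)\subset J(H)$ for every $g\in V'$ and $w^{-1}(J(H))\subset J(H)$ for every word $w$ in the generators from $\G$, the set $J(H)$ contains all pullbacks $w^{-1}(J(g))$. Now $J(g)$ is essentially the round circle $|z-b|=|a|^{-1/(d-1)}$, which sits in the attracting basin of $\infty$ of each $h\in\G$; its pullbacks under the iterated $\G$-words are closed curves surrounding $J(G)$ that descend through the intermediate scales and accumulate on $J(G)$ from outside, so they populate the region between $J(G)$ and $J(g)$ --- including the annulus $A$. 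Concretely, take $\G=\{h\}$ with $h(z)=z^{2}-1$, $b=0$, $d=3$, $R_{0}=1.7$, $R=3.4$, $|a|=0.005$ (admissible, since $|a|R^{3}\approx 0.2$): then $J(g_{a})=\{|z|=|a|^{-1/2}\approx 14.1\}$ and $h^{-2}(J(g_{a}))=\{z:|z^{4}-2z^{2}|=14.1\}\subset J(H)$ contains the real point $z\approx 2.21\in(R_{0},R)$. So no normal-family argument can put a prescribed round sub-annulus inside $F(H)$; the separating Jordan curve (which does exist, since the proposition is true) has to be threaded between consecutive pullbacks of $J(g)$, i.e.\ one must already control the component structure of $J(H)$ --- via the surrounding order, Theorem~\ref{mainth2}, and the results of \cite{SdpbpI} relating $J_{\min}$ to $\partial\hat{K}$ --- rather than read it off from a fixed annulus. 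Your deduction of $(\G\cup V')_{\min}\subset\G$, which leans entirely on the curve $\gamma\subset A$, collapses with this step.

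Three secondary points. First, your explanation of the hypothesis $(d,\deg(h))\neq(2,2)$ (``a degenerate resonance producing Julia points inside the annulus'') is not the right mechanism --- as shown above, Julia points of $H$ enter the annulus for every choice of degrees; the hypothesis is there because of a low-degree connectedness obstruction established in \cite{SdpbpI} (in particular, a postcritically bounded semigroup generated by two quadratic polynomials has connected Julia set, so for $\G=\{h\}$ with $\deg(h)=2$ and $d=2$ the conclusion $H\in{\cal G}_{dis}$ would simply be false). Second, a compact neighborhood of $g_{a}$ taken inside $\mbox{Poly}_{\deg =d}$ is not a neighborhood in $\mbox{Poly}_{\deg \geq 2}$, whose open sets around $g_{a}$ contain higher-degree perturbations; your conditions (i)--(iii) are open in the larger space as well, so this is repairable but should be said. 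Third, in step (4) the semi-hyperbolic case is asserted rather than proved: semi-hyperbolicity of $H$ demands a uniform bound on $\deg(g:V\rightarrow B(y,\delta))$ for \emph{all} $g\in H$, including arbitrarily long mixed words, over all $y\in J(H)$, and this does not follow from ``shrinking $\delta$''; it requires an argument that the critical points picked up along any word have forward orbits confined to $\hat{K}(G)$, uniformly away from the part of $J(H)$ where degrees could accumulate. Steps (1) and (2) (the choice of $c$ and $V$, the invariance $\hat{K}(H_{\G,V'})=\hat{K}(G)$, and $H_{\G,V'}\in{\cal G}$) are essentially correct.
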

\begin{rem}
\label{r:manyex}
By Proposition~\ref{Constprop}, 
there exists a $2$-generator polynomial semigroup 
$G=\langle h_{1},h_{2}\rangle $ in ${\cal G}_{dis}$ such that 
$h_{1}$ has a Siegel disk. Moreover, by Proposition~\ref{Constprop}, 
we can easily construct many examples of $G$ that satisfies statements~\ref{mainthjbnq1}, \ref{mainthjbnq2} 
of Theorem~\ref{mainthjbnq} and 
statement~\ref{c:rancor2sh2} of Corollary~\ref{c:rancor2sh}. 
\end{rem}
\begin{rem}
\label{r:gdisstar}
There are many ways to construct (semi-)hyperbolic semigroups $G\in {\cal G}_{dis}.$ 
For a $G\in {\cal G}_{dis}$ generated by a compact subset $\Gamma $ of Poly$_{\deg \geq 2}$, 
let $G_{\min ,\Gamma }$ be the polynomial semigroup generated by 
$\{ h\in \Gamma \mid J(h)\subset J_{\min }(G)\} .$ Then we have the following.
(1)(\cite[Theorem 2.36]{SdpbpI}) If $G_{\min ,\Gamma }$ is semi-hyperbolic, then $G$ is semi-hyperbolic. 
(2)(\cite[Theorem 2.37]{SdpbpI}) If $G_{\min ,\Gamma }$ is hyperbolic and 
$J_{\min }(G)\cap \bigcup _{h\in \Gamma \setminus \Gamma _{\min }}\mbox{CV}^{\ast }(h)=\emptyset $, 
then $G$ is hyperbolic. 

\end{rem}
\begin{ex}
\label{jbnqexfirst}
Let $g_{1}(z):=z^{2}-1$ and $g_{2}(z):= \frac{z^{2}}{4}.$ 
Let $\G :=\{ g_{1}^{2}, g_{2}^{2}\} .$ 
Moreover, let $G$ be the polynomial semigroup generated by 
$\G .$ Let $D:= \{ z\in \CC \mid |z|<0.4\} .$ Then, 
it is easy to see $g_{1}^{2}(D)\cup g_{2}^{2}(D)\subset D.$ Hence, 
$D\subset F(G).$ Moreover, by Remark~\ref{pcbrem}, we have that 
$P^{\ast }(G)=
\overline{\cup _{g\in G\cup \{ Id\} }g(\{ 0,-1\} )} 
\subset D\subset F(G).$ Hence, $G\in {\cal G}$ and $G$ is hyperbolic.  
Furthermore, let $K:=\{ z\in \CC \mid 0.4\leq |z|\leq 4\} .$ Then, 
it is easy to see that $(g_{1}^{2})^{-1}(K)\cup (g_{2}^{2})^{-1}(K)\subset K$ and 
$(g_{1}^{2})^{-1}(K)\cap (g_{2}^{2})^{-1}(K)=\emptyset .$ 
Combining these facts with \cite[Corollary 3.2]{HM1} and \cite[Lemma 2.4]{S3}, 
we obtain that $J(G)$ is disconnected. Therefore, 
$G\in {\cal G}_{dis}.$ 
Moreover, it is easy to see that 
$\G _{\min }=\{ g_{1}^{2}\} .$ 
Since $J(g_{1}^{2})$ is not a Jordan curve, 
we can apply Theorem~\ref{mainthjbnq}. Setting 
$\alpha := (g_{1}^{2},g_{1}^{2},g_{1}^{2},\ldots )\in \GN $, 
it follows that 
for any $$\g \in {\cal I}:= \left\{ \omega \in R(\G ,\G \setminus \G _{\min }) 
\mid 
\exists (n_{k}) \mbox { with } \sigma ^{n_{k}}(\omega )\rightarrow 
\alpha \right\} ,$$ 
$J_{\g }(f)$ is a Jordan curve but not a quasicircle, and  
$A_{\g }(f)$ is a John domain but the bounded component 
of $F_{\g }(f)$ is not a John domain. 
(See Figure~\ref{fig:dcgraph}: the Julia set of $G$. In this example, 
we have $(g_{1}^{2})^{-1}(J(G))\cap (g_{2}^{2})^{-1}(J(G))=\emptyset $, and so   
$\hat{{\cal J}}_{G}=\{ J_{\g }(f)\mid \g \in \GN \} $,  
and if $\g \neq \omega , J_{\g }(f)\cap J_{\omega }(f)=\emptyset .$) 
Note that by Theorem~\ref{t:tghyp}, 
if $\gamma \not\in {\cal I}$, 
then either $J_{\gamma }(f)$ is not a Jordan curve or $J_{\gamma }(f)$ is a quasicircle.
\begin{figure}[htbp]
\caption{The Julia set of $G=\langle g_{1}^{2},g_{2}^{2}\rangle .$}
\ \ \ \ \ \ \ \ \ \ \ \ \ \ \ \ \ \ \ \ \ \ \ \ \ \ \ \ \ \ \ \ \ \ \ \ \ \ 
\ \ \ \ \ \ \ \ \ \ \ \ \ 
\includegraphics[width=3cm,width=3cm]{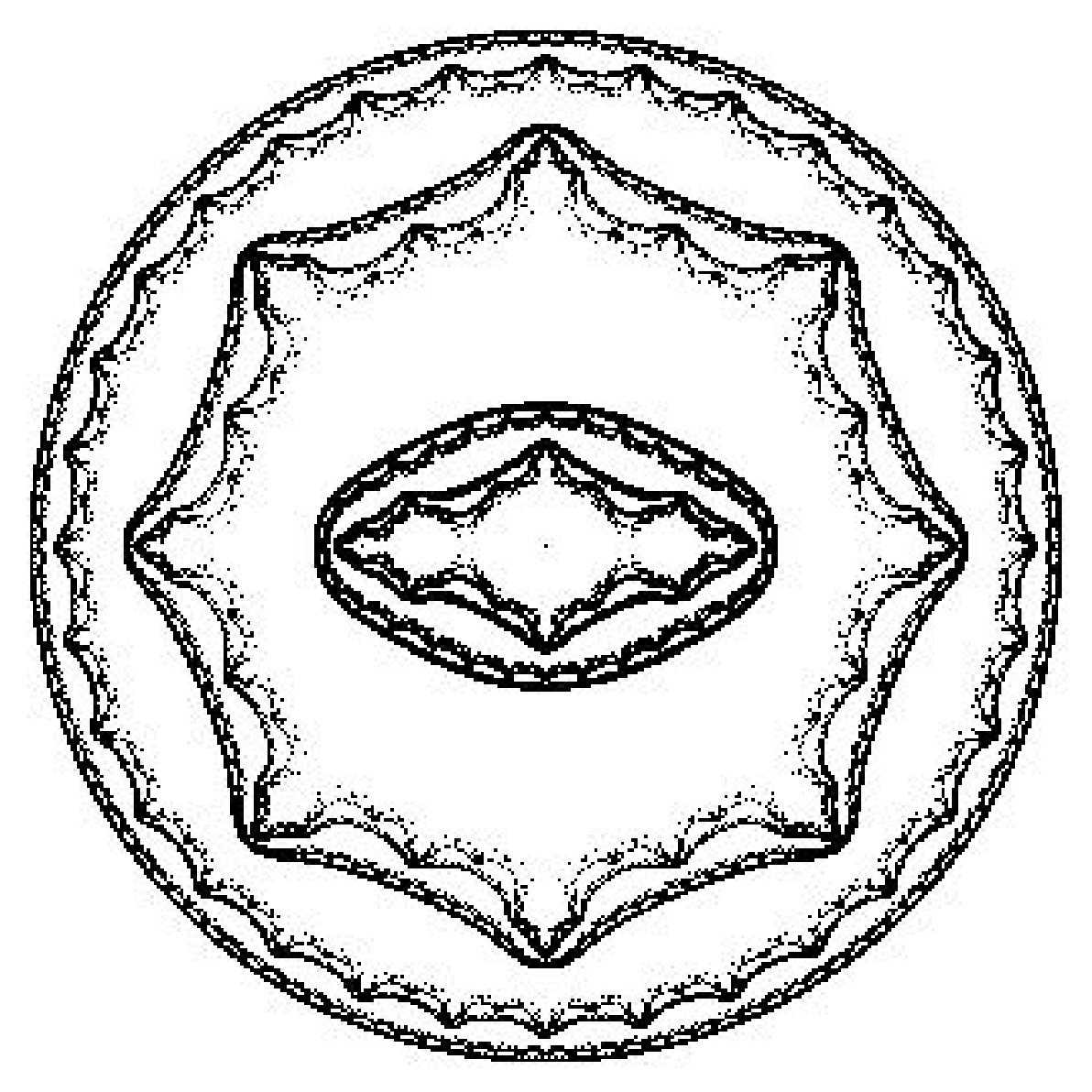}
\label{fig:dcgraph}
\end{figure}
\end{ex}
In \cite{SdpbpI, SdpbpIII, S7, SS, SU1, Splms}, we obtain many examples of postcritically bounded polynomial semigroups with 
many additional properties.  
In fact, several systematic ways to give such examples are found in those papers. 
\section{Tools}
\label{Tools}
In this section, we recall some fundamental tools to prove the main results. 

Let $G$ be a rational semigroup. Then, for each $g\in G$, $g(F(G))\subset F(G), g^{-1}(J(G))\subset J(G).$ 
If $G$ is generated by a compact family $\Lambda $ of Rat, then 
$J(G)=\bigcup _{h\in \Lambda }h^{-1}(J(G))$ (this is called backward self-similarity). 
If $\sharp J(G)\geq 3$, then $J(G)$ is a perfect set and 
$J(G)$ is equal to the closure of the set of repelling cycles of elements of $G$. 
We set $E(G):=\{ z\in \CCI \mid \sharp \bigcup _{g\in G}g^{-1}(\{ z\} )<\infty \} .$ 
If $\sharp J(G)\geq 3$, then $\sharp E(G)\leq 2$ and 
for each $z\in J(G)\setminus E(G)$, $J(G)=\overline{\bigcup _{g\in G}g^{-1}(\{ z\} ) }.$ 
If $\sharp J(G)\geq 3$, then $J(G)$ is the smallest set in 
$\{ \emptyset \neq K\subset \CCI \mid K \mbox{ is compact}, \forall g\in G, g(K)\subset K\} .$ For more details on these  
properties of rational semigroups, 
see \cite{HM1,St, GR, S3}.  For the dynamics of postcritically bounded polynomial semigroups, 
see \cite{SdpbpI, SdpbpIII, SS}. 
If $f:X\times \CCI \rightarrow X\times \CCI $ is a polynomial skew product such that 
$\deg (f_{x})\geq 2$ for each $x\in X$ 
and such that 
$\pi _{\CCI }(P(f))\setminus \{ \infty \} $ is bounded in $\CC $, 
then for each $x\in X$, 
$J_{x}(f)$ is connected (\cite[Lemma 3.6]{SdpbpIII}). 
For some fundamental properties of skew products, see 
\cite{S1,S4,SdpbpIII}. 

\section{Proofs}
\label{Proofs}
In this section, we give the proofs of the main results. 

\subsection{Proofs of the results in \ref{fibsec}}
\label{pffibsec}
In this section, we prove results in section \ref{fibsec}.

To prove results in \ref{fibsec}, we need the following notations and 
lemmas.
\begin{df}[\cite{S1}]
Let $f:X\times \CCI \rightarrow X\times \CCI $ be a rational skew 
product over $g:X\rightarrow X.$ Let $N\in \NN .$ 
We say that a point $(x_{0},y_{0})\in X\times \CCI $ belongs to 
$SH_{N}(f)$ if there exists a neighborhood $U$ of 
$x_{0}$ in $X$ and a positive number $\delta $ such that,  
for any $x\in U$, any $n\in \NN $, any $x_{n}\in g^{-n}(x)$, 
and any connected component 
$V$ of $(f_{x_{n},n})^{-1}(B(y_{0},\delta ))$, we have 
$\deg (f_{x_{n},n}:V\rightarrow B(y_{0},\delta ))\leq N.$ 
Moreover, we set 
$UH(f):= (X\times \CCI )\setminus \cup _{N\in \NN }SH_{N}(f).$ 
We say that $f$ is semi-hyperbolic (along fibers) if 
$UH(f)\subset \tilde{F}(f).$ 
\end{df}
\begin{rem}
Under the above notation, we have $UH(f)\subset P(f).$ 
\end{rem}
\begin{rem}
\label{hypskewsemigrrem}
Let $\G $ be a compact subset of Rat and let 
$f:\GN \times \CCI \rightarrow \GN \times \CCI $ be the skew product 
associated with $\G .$  Let $G$ be the rational semigroup generated by 
$\G . $ Then, by \cite[Remark 2.12]{SdpbpIII}, 
$f$ is semi-hyperbolic if and only if 
$G$ is semi-hyperbolic. Similarly, 
$f$ is hyperbolic if and only if $G$ is hyperbolic. 
\end{rem}
For a point $z\in \CC $ and a number $r>0$, 
we set $D(z,r):=\{ z\in \CC \mid |y-z|<r\} .$ 
\begin{lem}
\label{nclimlem}
Let $f:X\times \CCI \rightarrow X\times \CCI $ be a 
polynomial skew product over $g:X\rightarrow X$ such that 
for each $\omega \in X$, we have $d(\omega )\geq 2.$ 
Let $x\in X$ and  
 $y_{0}\in F_{x}(f)$.  
 Suppose that there exists a strictly increasing sequence 
 $\{ n_{j}\} _{j\in \NN }$ of positive integers such that   
the sequence $\{ f_{x,n_{j}}\} _{j\in \NN }$  
converges to a non-constant map around $y_{0}$, and such that 
$\lim _{j\rightarrow \infty }f^{n_{j}}(x,y_{0})$ exists. 
We set $(x_{\infty }, y_{\infty }):=
\lim _{j\rightarrow \infty }f^{n_{j}}(x,y_{0}).$ 
Then, there exists a non-empty bounded open set $V$ in $\CC $
and 
a number $k\in \NN$ such that 
$\{ x_{\infty }\} \times \partial V\subset \tilde{J}(f)\cap UH(f)
\subset \tilde{J}(f)\cap P(f)$, 
and such that for each $j$ with $j\geq k$, 
$f_{x,n_{j}}(y_{0})\in V.$  
\end{lem}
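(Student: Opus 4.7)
The plan is to build $V$ by applying the open mapping theorem to the non-constant limit $\psi$, and then prove the two inclusions $\{x_\infty\}\times \partial V\subset \tilde J(f)$ and $\{x_\infty\}\times \partial V\subset UH(f)$ by separate contradiction arguments; the further inclusion in $\tilde J(f)\cap P(f)$ is automatic from the remark $UH(f)\subset P(f)$. First I pass to a subsequence (still denoted $\{n_j\}$) so that $f_{x,n_j}\to\psi$ locally uniformly on a neighborhood $U$ of $y_0$, with $\psi$ holomorphic and non-constant. Since $\infty$ is a super-attracting fixed point of every fiber polynomial, any limit function of $\{f_{x,n_j}\}$ that attains the value $\infty$ somewhere is identically $\infty$; non-constancy therefore forces $y_\infty = \psi(y_0)\in \CC$. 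Shrink $U$ to an open disk $D$ with $\overline D\subset U$ on which $\psi$ is non-constant, set $d_0:=\deg(\psi|_D)$, and use the open mapping theorem to pick a bounded open neighborhood $V$ of $y_\infty$ with $\overline V\subset \psi(D)$. Uniform convergence on $\overline D$ then gives $k\in\NN$ such that for $j\geq k$ we have $f_{x,n_j}(y_0)\in V$, and $f_{x,n_j}^{-1}(V)\cap D$ contains a component $V_j$ mapped onto $V$ with degree at most $d_0$, establishing the last clause of the lemma.

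For $\{x_\infty\}\times \partial V\subset \tilde J(f)$, fix $z\in \partial V$ and suppose instead that $(x_\infty,z)\in \tilde F(f)$. Then there is an open product neighborhood $\Omega$ of $(x_\infty,z)$ on which $\{f_{\omega,m}\}_{m\in\NN}$ forms a normal family in a suitable fiberwise sense. Since $g^{n_j}(x)\to x_\infty$, for $j$ large the fiber $\{g^{n_j}(x)\}\times\CCI$ meets $\Omega$, and pulling back the slice $\Omega\cap\pi^{-1}(\{g^{n_j}(x)\})$ by $f_{x,n_j}$ produces a neighborhood inside $D$ of a preimage of $z$ on $\overline{V_j}$ where the full iterate family $\{f_{x,n_j+m}\}_m$ is normal. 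Combining this with the bounded-degree covering $V_j\to V$ and with uniform convergence on $\overline D$ extends $\psi$ as a holomorphic limit of the same subsequence to a larger open set whose image strictly contains $\overline V$, contradicting the maximality built into $\overline V\subset \psi(D)$ (made precise by shrinking and iterating). Hence $(x_\infty,z)\in \tilde J(f)$.

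For $\{x_\infty\}\times\partial V\subset UH(f)$, suppose for contradiction that $(x_\infty,z_0)\in SH_N(f)$ for some $z_0\in\partial V$ and some $N\in\NN$. Take the neighborhood $U'$ of $x_\infty$ and the radius $\delta>0$ from the definition of $SH_N$: every component of $f_{\omega',n}^{-1}(B(z_0,\delta))$ maps onto $B(z_0,\delta)$ with degree at most $N$, whenever $\omega\in U'$ and $\omega'\in g^{-n}(\omega)$. For $j$ large, $g^{n_j}(x)\in U'$, so applying this with $\omega=g^{n_j}(x)$ and $\omega'=x$ shows that the component $W_j$ of $f_{x,n_j}^{-1}(B(z_0,\delta))$ touching $\overline{V_j}$ has degree at most $N$ over $B(z_0,\delta)$. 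Combining the degree bound $d_0$ on $V_j\to V$ with the degree bound $N$ on $W_j\to B(z_0,\delta)$, the restrictions $f_{x,n_j}|_{V_j\cup W_j}$ form a normal family of bounded-degree proper coverings of $V\cup B(z_0,\delta)$, so $\psi$ extends to a bounded-degree proper holomorphic map onto an open set strictly larger than $V$ near $z_0$. Repeating this enlargement around each point of the compact set $\partial V$ and iterating propagates $\psi$ to a uniformly bounded-degree map on an expanding family of open sets, which conflicts with $\deg(f_{x,n_j})\to\infty$ via Riemann--Hurwitz.

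The main obstacle is this last propagation step: converting a purely \emph{local} bounded-degree pullback at a boundary point $z_0\in\partial V$ into a \emph{global} bound on the degrees of $f_{x,n_j}$. The delicate part is iterating the enlargement while preserving a uniform degree bound and controlling distortion, for which I would rely on the backward invariance of $SH_N$ and on the bounded-degree covering and Koebe-type distortion machinery developed in \cite{S1}.
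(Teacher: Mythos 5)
Your construction of $V$ is where the argument breaks down. You take $V$ to be an essentially arbitrary small bounded neighborhood of $y_{\infty }$ with $\overline{V}\subset \psi (D)$, and then try to force $\{ x_{\infty }\} \times \partial V\subset \tilde{J}(f)\cap UH(f)$ by contradiction. But for such a $V$ the boundary inclusion is simply false in general, and no contradiction argument can rescue it. The model situation to keep in mind is the one the paper itself uses elsewhere: a single polynomial $h$ with a Siegel disk, $X$ a point, $y_{0}$ in the Siegel disk. Then a subsequence of $\{ h^{n_{j}}\} $ converges to a non-constant (rotation-like) limit near $y_{0}$, $y_{\infty }$ lies in the Fatou set, and the boundary of a small disk around $y_{\infty }$ lies entirely in the Fatou set and misses the postcritical set; your two contradiction arguments would have to fail there, and indeed they do. In the first one, normality of the iterates near a point of $\partial V$ does not conflict with any ``maximality'' of $\overline{V}\subset \psi (D)$, because nothing in your choice of $V$ is maximal. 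In the second one, the conclusion you aim for --- a conflict between locally bounded degree over small balls and $\deg (f_{x,n_{j}})\rightarrow \infty $ via Riemann--Hurwitz --- is spurious: bounded local degree over small balls (that is, semi-hyperbolicity) is perfectly compatible with the total degree tending to infinity (hyperbolic maps have $N=1$ everywhere on the Julia set), and in the Siegel-disk example $f_{x,n_{j}}$ is even injective on the relevant domain. So the propagation step you flag as the ``main obstacle'' is not a technical gap to be filled; the statement you are propagating is not available for your $V$.

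What makes the lemma work is that $V$ must be chosen canonically, not as a small ball: the paper sets $V$ to be the set of $y$ such that the transition maps $f_{g^{n_{i}}(x),\, n_{j}-n_{i}}$ converge to the identity uniformly on a neighborhood of $y$ (a Siegel-disk-like maximal domain for the limit dynamics at $x_{\infty }$). With that definition, the inclusion $\{ x_{\infty }\} \times \partial V\subset \tilde{J}(f)\cap UH(f)$ is exactly the content of the cited result \cite[Lemma 2.13]{S1} (analogous to the classical fact that the boundary of a Siegel disk lies in the Julia set and in the closure of the postcritical set), boundedness of $V$ comes from \cite[Lemma 3.4.4]{SdpbpIII} (a ball around $\infty $ avoids $V$), and the only thing left to check is $y_{\infty }\in V$: since the limit $\varphi $ of $f_{x,n_{j}}$ near $y_{0}$ is non-constant, $f_{x,n_{i}}(D(y_{0},a))\supset D(y_{\infty },\epsilon )$ for large $i$, and the Cauchy-type estimate $d(f_{x,n_{j}}(y),f_{x,n_{i}}(y))\rightarrow 0$ transfers to $d(f_{g^{n_{i}}(x),\, n_{j}-n_{i}}(\xi ),\xi )\rightarrow 0$ on $D(y_{\infty },\epsilon )$, giving $y_{\infty }\in V$ and hence $f_{x,n_{j}}(y_{0})\in V$ for large $j$. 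Your opening reduction (that $y_{\infty }\in \CC $ and that $f_{x,n_{j}}(y_{0})$ eventually lies in any neighborhood of $y_{\infty }$) is fine, but the heart of the lemma is the canonical choice of $V$ together with the boundary lemma from \cite{S1}, and that idea is missing from your proposal.
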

\begin{proof}
We set 
$$V:= \{ y\in \CCI \mid 
\exists \epsilon >0, \lim_{i\rightarrow \infty }
\sup _{j>i}\sup _{d(\xi ,y)\leq \epsilon }
d(f_{g^{n_{i}}(x),n_{j}-n_{i}}(\xi ), \xi )=0\} .$$
Then, by \cite[Lemma 2.13]{S1}, 
 we have $\{ x_{\infty }\} \times \partial V
 \subset \tilde{J}(f)\cap UH(f)\subset 
 \tilde{J}(f)\cap P(f).$ 
 Moreover, since for each $x\in X$, 
 $f_{x,1}$ is a polynomial with $d(x)\geq 2$, 
\cite[Lemma 3.4(4)]{SdpbpIII} implies that 
there exists a ball $B$ around $\infty $ such that 
$B\subset \CCI \setminus V.$  

 From the assumption, 
 there exists a number $a>0$ and a non-constant map 
 $\varphi :D(y_{0},a)\rightarrow \CCI $ 
 such that $f_{x,n_{j}}\rightarrow \varphi $ 
 as $j\rightarrow \infty $,
 uniformly 
 on $D(y_{0},a).$   Hence,  
  $d(f_{x,n_{j}}(y),f_{x,n_{i}}(y))\rightarrow 0 $ as 
 $i,j\rightarrow \infty $, uniformly on $D(y_{0},a).$ 
 Moreover, since $\varphi $ is not constant, 
 there exists a positive number 
 $\epsilon $ such that,  
 for each large $i$, $f_{x,n_{i}}(D(y_{0},a))\supset D(y_{\infty },\epsilon ).$ 
Therefore, it follows that 
$d(f_{g^{n_{i}}(x),n_{j}-n_{i}}(\xi ),\xi )\rightarrow 0$ as $i,j\rightarrow 
\infty $ uniformly on $D(y_{\infty },\epsilon ).$ 
Thus, $y_{\infty }\in V.$ Hence,  
there exists a number $k\in \NN $ such that,  
for each $j\geq k$, $f_{x,n_{j}}(y_{0})\in V.$ 
Therefore, we have proved Lemma~\ref{nclimlem}.
\end{proof}
\begin{rem}
In \cite[Lemma 2.13]{S1} and \cite[Theorem 2.6]{S4}, 
the sequence $(n_{j})$ of positive integers should be strictly 
increasing.
\end{rem}
\begin{lem}
\label{ncintk}
Let $\G $ be a non-empty compact subset of 
{\em Poly}$_{\deg \geq 2}.$ 
Let $f:\GNCR $ be the skew product 
associated with $\G .$ 
Let $G$ be the polynomial semigroup generated by 
$\G .$ 
Let $\g \in \GN $.  
Let $y_{0}\in F_{\g }(f)$ and 
suppose that there exists a strictly increasing sequence 
$\{ n_{j}\} _{j\in \NN }$ of positive integers such that  
$\{ f_{\g ,n_{j}}\} _{j\in \NN }$ converges 
to a non-constant map around $y_{0}.$ 
Moreover, suppose that $G\in {\cal G}.$ 
Then, there exists a number $j\in \NN $ such that 
$f_{\g ,n_{j}}(y_{0})\in $ {\em int}$(\hat{K}(G)).$ 
\end{lem}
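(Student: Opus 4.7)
The plan is to deduce Lemma~\ref{ncintk} from Lemma~\ref{nclimlem} by combining the fact that the set $V$ produced there has its boundary in the fiber-postcritical set with the assumption $G\in\mathcal{G}$ and the maximum modulus principle.

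First I would pass to a subsequence. Since $\GN\times\CCI$ is compact, after replacing $\{n_j\}$ by a strictly increasing subsequence we may assume that $f^{n_j}(\g,y_0)$ converges to some $(x_\infty,y_\infty)\in\GN\times\CCI$. Any index $j$ found in this subsequence also lies in the original sequence, so this reduction is harmless. Now all the hypotheses of Lemma~\ref{nclimlem} are satisfied, and we obtain a bounded open set $V\subset\CC$ and an integer $k\in\NN$ such that $\{x_\infty\}\times\partial V\subset\tilde J(f)\cap P(f)$ and $f_{\g,n_j}(y_0)\in V$ for all $j\geq k$.

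The next step is to locate $\partial V$ inside $P^{\ast}(G)$. Directly from the definitions, for any point $(\omega,z)\in f^n(C(f))$ there exist $\eta\in\GN$ and a critical point $c$ of $\eta_1\in\G$ with $\omega=\sigma^n(\eta)$ and $z=\eta_n\circ\cdots\circ\eta_1(c)$; hence $\pi_{\CCI}(f^n(C(f)))\subset P(G)$, and taking closures yields $\pi_{\CCI}(P(f))\subset P(G)$. Because $V$ is bounded in $\CC$, its boundary lies in $\CC$, and therefore
\[
\partial V\subset P(G)\cap\CC=P^{\ast}(G).
\]

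Now I would use postcritical boundedness. Set $M:=\sup\{|z|:z\in P^{\ast}(G)\}<\infty$, which is finite because $G\in\mathcal{G}$. By Remark~\ref{pcbrem}, $g(P^{\ast}(G))\subset P^{\ast}(G)$ for every $g\in G$, so $|g(z)|\leq M$ for all $z\in\partial V$ and all $g\in G$. Since each $g\in G$ is a polynomial (in particular entire) and $V$ is a bounded open set in $\CC$, applying the maximum modulus principle on each connected component of $V$ gives $\max_{\overline V}|g|\leq\max_{\partial V}|g|\leq M$. Hence $\bigcup_{g\in G}g(\overline V)$ is bounded in $\CC$, which means $\overline V\subset\hat K(G)$, and because $V$ is open this gives $V\subset\mathrm{int}(\hat K(G))$. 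Combining with $f_{\g,n_j}(y_0)\in V$ for $j\geq k$ completes the proof.

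I do not foresee a serious obstacle; the only delicate point is the verification $\pi_{\CCI}(P(f))\subset P(G)$, which must be checked carefully because $P(f)$ is defined as a closure in the product $\GN\times\CCI$ rather than fiberwise, but continuity of $\pi_{\CCI}$ makes this routine.
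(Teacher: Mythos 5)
Your proof is correct and takes essentially the same route as the paper's: apply Lemma~\ref{nclimlem} to get the bounded open set $V$ with $\partial V\subset \pi _{\CCI }(P(f))\cap \CC \subset P^{\ast }(G)$, then use the forward invariance $g(P^{\ast }(G))\subset P^{\ast }(G)$ together with the maximum principle to conclude $V\subset \mbox{int}(\hat{K}(G))$. The additional care you take (passing to a convergent subsequence so that the limit hypothesis of Lemma~\ref{nclimlem} holds, and verifying $\pi _{\CCI }(P(f))\subset P(G)$) merely fills in details the paper leaves implicit.
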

\begin{proof}
By Lemma~\ref{nclimlem}, 
there exists a bounded open set $V$ in $\CC $, 
a point $\g _{\infty }\in \GN $, and a number 
$j\in \NN $ such that 
$\{ \g _{\infty }\} \times \partial V\subset 
\tilde{J}(f)\cap P(f)$, and such that 
$f_{\g ,n_{j}}(y_{0})\in V.$  
Then, we have $\partial V\subset P^{\ast }(G).$ 
Since $g(P^{\ast }(G))\subset P^{\ast }(G)$ for each $g\in G$, 
the maximum principle implies that 
$V\subset $ int$(\hat{K}(G)).$ 
Hence, $f_{\g ,n_{j}}(y_{0})\in $ int$(\hat{K}(G)).$ 
Therefore, we have proved Lemma~\ref{ncintk}.
\end{proof}
We now demonstrate statements \ref{mainth3-0} and \ref{mainth3-1} of Theorem~\ref{mainth3}. \\ 
{\bf Proof of statements \ref{mainth3-0} and \ref{mainth3-1} of Theorem~\ref{mainth3}:}
First, we will show the following claim.\\ 
Claim 1. Let $\g \in R(\G ,\G \setminus \G _{\min }).$ Then, 
for any point $y_{0}\in F_{\g }(f)$, there exists 
no non-constant limit function of $\{ f_{\g ,n}\} _{n\in \NN }$ 
around $y_{0}.$ 

 To show this claim, 
 suppose that there exists 
 a strictly increasing sequence $\{ n_{j}\} _{j\in \NN }$ of 
 positive integers
such that 
 $f_{\g ,n_{j}}$ tends to a non-constant map as $j\rightarrow \infty $ 
 around $y_{0}.$ 
We consider the following two cases: Case (i): $\G \setminus \G _{\min }$ is compact. Case (ii): $\G \setminus \G _{\min }$ is 
not compact. Suppose that we have Case (i). 
Since $G\in {\cal G}$, Lemma~\ref{ncintk} implies that there exists a 
number $k\in \NN $ such that 
$f_{\g ,n_{k}}(y_{0})\in $ int$(\hat{K}(G)).$ 
Hence, we get that the sequence 
$\{ f_{\sigma ^{n_{k}}(\g ), n_{k+j}-n_{k}}\} _{j\in \NN }$ converges to a non-constant map around 
the point $y_{1}:=f_{\g ,n_{k}}(y_{0})\in $ int$(\hat{K}(G)).$ 
However, since we are assuming that $\G \setminus \G _{\min }$ is 
compact, 
\cite[Theorem 2.20.5(b)]{SdpbpI}
implies that  
$\cup _{h\in \G \setminus \G _{\min }}h(\hat{K}(G))$ is a compact 
subset of int$(\hat{K}(G))$, 
which implies 
that if we take the hyperbolic metric for each connected component 
of int$(\hat{K}(G))$, then there exists a constant $0<c<1$ such 
that for each $z\in $ int$(\hat{K}(G))$ and 
each $h\in \G \setminus \G _{\min }$, we have 
$\| h'(z)\| \leq c$, where $\| h'(z) \| $ denotes 
the norm of the derivative of $h$ at $z$ measured from the 
hyperbolic metric on the connected component $W_{1}$ of int$(\hat{K}(G))$ 
containing $z$ to that of the connected component $W_{2}$ of int$(\hat{K}(G))$ 
containing $h(z).$ This leads to a contradiction, since 
we have that 
$\g \in R(\G ,\G \setminus \G _{\min })$ 
and the sequence 
$\{ f_{\sigma ^{n_{k}}(\g ), n_{k+j}-n_{k}}\} _{j\in \NN }$ converges to a non-constant map around 
the point $y_{1}\in $ int$(\hat{K}(G)).$ 
We now suppose that we have Case (ii). 
Then, combining the arguments in Case (i) and \cite[Theorem 2.20.5(b), Proposition 2.33]{SdpbpI}, 
we again obtain a contradiction. 
Hence, we have shown Claim 1.

 Next, let $S$ be a non-empty 
 compact subset of $\G \setminus \G _{\min }$ and let 
 $\g \in R(\G ,S).$  
 We show the following claim.\\ 
Claim 2. For each point $y_{0}$ in each bounded component of $F_{\g }(f)$,  
there exists a number $n\in \NN $ such that 
$f_{\g ,n}(y_{0})\in $ int$(\hat{K}(G)).$ 

 To show this claim, we suppose that there exists no $n\in \NN $ 
 such that $f_{\g ,n}(y_{0})\in $ int$(\hat{K}(G))$.  
By Claim 1, $\{ f_{\g ,n}\} _{n\in \NN }$ has only constant limit functions 
around $y_{0}.$ Moreover, if a point $w_{0}\in \CC $ 
is a constant limit function of 
$\{ f_{\g ,n}\} _{n\in \NN }$, then since $G\in {\cal G}$, 
\cite[Lemma 3.13]{SdpbpIII} implies that 
we must have that $w_{0}\in P^{\ast }(G)\subset \hat{K}(G).$ Since 
we are assuming that there exists no $n\in \NN $ 
 such that $f_{\g ,n}(y_{0})\in $ int$(\hat{K}(G))$, 
 it follows that $w_{0}\in \partial \hat{K}(G).$ 
Combining this with \cite[Theorem 2.20.2]{SdpbpI}
we deduce that $w_{0}\in \partial \hat{K}(G)\subset J_{\min }.$  
From this argument, we get that 
\begin{equation}
\label{main3-1pfeq1}
d(f_{\g ,n}(y_{0}),J_{\min })\rightarrow 0, \mbox{ as }n\rightarrow \infty.
\end{equation}  
However, since  
$\g$  belongs to $R(\G ,S)$, 
the above (\ref{main3-1pfeq1}) implies that 
the sequence $\{ f_{\g ,n}(y_{0})\} _{n\in \NN }$ accumulates 
in the compact set 
$\cup _{h\in S}
h^{-1}(J_{\min})$, which is apart from 
$J_{\min }$, by \cite[Theorem 2.20.5(b)]{SdpbpI}. 
This contradicts (\ref{main3-1pfeq1}). Hence, we have shown that 
Claim 2 holds. 

  Next, we show the following claim.\\ 
Claim 3. There exists exactly one bounded 
component $U_{\g }$ of $F_{\g }(f).$ 

 To show this claim, we take an element $h\in \G_{\min }$ 
(note that $\G_{\min }\neq \emptyset$, by Proposition~\ref{bminprop}).
We write the element $\g $ as $\g =(\g _{1},\g_{2},\ldots ).$ 
For any $l\in \NN $ with $l\geq 2$, 
let $s_{l}\in \NN $ be an integer 
with $s_{l}>l$ such that  
$\g_{s_{l}}\in S.$ 
We may assume that for each $l\in \NN $, 
$s_{l}<s_{l+1}.$ For each $l\in \NN $, 
let $\g ^{l}:= (\g _{1},\g _{2},\ldots ,\g _{s_{l}-1},h,h,h,\ldots )\in 
\GN $ and  
$\tilde{\g }^{l}:=\sigma ^{s_{l}-1}(\g )
=(\g _{s_{l}}, \g _{s_{l}+1},\ldots )
\in \GN .$ Moreover,  
let $\rho :=(h,h,h,\ldots )\in \GN .$ 
Since $h\in \G _{\min }$, 
we have  
\begin{equation}
\label{main3-1pfeq2}
J_{\rho }(f)=J(h)\subset 
J_{\min }.
\end{equation} 
Moreover, since $\g _{s_{l}}$ does not belong to 
$\G _{\min }$, combining it with \cite[Theorem 2.20.5(b)]{SdpbpI}, 
we obtain  
$\g_{s_{l}}^{-1}(J(G))\cap J_{\min }=\emptyset .$ 
Hence, we have that for each $l\in \NN $, 
\begin{equation}
\label{main3-1pfeq3}
J_{\tilde{\g }^{l}}(f)=
\g _{s_{l}}^{-1}(J_{\sigma ^{s_{l}}(\g )}(f))
\subset  
\g_{s_{l}}^{-1}(J(G)) \subset \CCI \setminus J_{\min }.
\end{equation} 
Combining (\ref{main3-1pfeq2}),  (\ref{main3-1pfeq3}), 
and \cite[Lemma 3.9]{SdpbpIII}
we obtain 
\begin{equation}
\label{main3-1pfeq3a}
J_{\rho }(f)<J_{\tilde{\g }^{l}}(f), 
\end{equation}
which implies   
\begin{equation}
\label{main3-1pfeq4}
J_{\g ^{l}}(f)=
(f_{\g , s_{l}-1})^{-1}(J_{\rho }(f))
<(f_{\g ,s_{l}-1})^{-1}(J_{\tilde{\g }^{l}}(f))
=J_{\g }(f).
\end{equation} 
From \cite[Lemma 3.9]{SdpbpIII} 
and 
(\ref{main3-1pfeq4}), 
it follows that 
there exists a bounded component 
$U_{\g }$ of $F_{\g }(f)$ 
such that 
for each $l\in \NN $ with $l\geq 2$, 
\begin{equation}
\label{main3-1pfeq5}
J_{\g ^{l}}(f)\subset U_{\g }.
\end{equation}
We now suppose that there exists a bounded component $V$ of 
$F_{\g }(f)$ with $V\neq U_{\g }$, and we will deduce a contradiction.
Under the above assumption, we take a point $y\in V.$ 
Then, by Claim 2, we get that 
there exists a number $l\in \NN $ such that 
$f_{\g ,l}(y)\in $ int$(\hat{K}(G)).$ 
Since $s_{l}> l$, we obtain 
$f_{\g ,s_{l}-1}(y)\in $ int$(\hat{K}(G))\subset K(h)$, 
where, $h\in \G _{\min }$ is the element which we have taken before.  
By (\ref{main3-1pfeq3a}), 
we have that there exists a bounded component 
$B$ of $F_{\tilde{\g }^{l}}(f) $ 
containing $K(h).$ 
Hence, we have 
$f_{\g ,s_{l}-1}(y)\in B.$ 
Since the map $f_{\g ,s_{l}-1}:V\rightarrow B$ 
is surjective, it follows that 
$V\cap \left((f_{\g ,s_{l}-1})^{-1}(J(h))\right) 
\neq \emptyset .$ 
Combining this  with $ (f_{\g ,s_{l}-1})^{-1}(J(h))=
(f_{\g ^{l}, s_{l}-1})^{-1}(J(h))=
J_{\g ^{l}}(f)$, 
we obtain 
$V\cap J_{\g ^{l}}(f)   
 \neq \emptyset .$ 
 However, this leads to a contradiction, 
since we have (\ref{main3-1pfeq5}) and $U_{\g }\cap V=\emptyset .$ 
Hence, we have shown Claim 3.

 Next, we show the following claim. \\ 
Claim 4. We have 
$\partial U_{\g }=\partial A_{\gamma }(f)= J_{\g }(f).$ 

 To show this claim, 
since $U_{\g }=\mbox{int}(K_{\g }(f))$, 
\cite[Lemma 3.4(5)]{SdpbpIII}
implies that 
$\partial U_{\g }=J_{\g}(f).$ Moreover, by 
\cite[Lemma 3.4(4)]{SdpbpIII}
we have 
$\partial A_{\gamma }(f)=J_{\gamma }(f).$  
Thus, we have shown Claim 4.

 We now show the following claim.\\ 
Claim 5. We have $\hat{J}_{\g }(f)=J_{\g }(f)$ and 
the map $\omega \mapsto J_{\omega }(f)$ is continuous at $\g $
 with respect to the Hausdorff metric in the space of non-empty 
 compact subsets of $\CCI .$  

 To show this claim, 
 suppose that there exists a point $z$ with 
 $z\in \hat{J}_{\g }(f)\setminus J_{\g }(f).$ 
Since $\hat{J}_{\g }(f)\setminus 
J_{\g }(f)$ is included in the union 
of bounded components of $F_{\g }(f)$, 
combining it with Claim 2,   
we get that there exists a number $n\in \NN $ such that 
$f_{\g ,n}(z)\in $ int$(\hat{K}(G))\subset F(G).$ 
However, since 
$z\in \hat{J}_{\g }(f)$, we must have 
that $f_{\g ,n}(z)=\pi _{\CCI }(f_{\g }^{n}(z))\in 
\pi _{\CCI }(\tilde{J}(f))=J(G).$ This is a contradiction. 
Hence, we obtain $\hat{J}_{\g }(f)=J_{\g }(f).$ 
Combining this with 
\cite[Lemma 3.4(2)]{SdpbpIII}, 
it follows that $\omega \mapsto J_{\omega }(f)$ is 
continuous at $\g .$   
Therefore, we have shown Claim 5. 

 Combining all Claims $1,\ldots ,5$, it follows   
 that statements \ref{mainth3-0}, 2(a), 2(b) and 2(c) 
of Theorem~\ref{mainth3}
 hold.  

 We now show statement 2(d). 
Let 
 $\g \in R(\G ,S)$ be an element. 
Suppose that $m_{2}(J_{\g }(f))>0$, where $m_{2}$ denotes the 
$2$-dimensional Lebesgue measure. Then, there exists a 
Lebesgue density point $b\in J_{\g }(f)$ so that 
\begin{equation}
\label{lebpteq}
\lim _{s\rightarrow 0}
\frac{m_{2}\left(D(b,s)\cap J_{\g }(f)\right)}
{m_{2}(D(b,s))}=1.
\end{equation} 
Since $\g $ belongs to $R(\G ,S)$, 
there exists an element $\g _{\infty }\in S$ and  
a sequence $\{ n_{j}\} _{j\in \NN }$ of positive integers  
such that $n_{j}\rightarrow \infty $ and 
$\g _{n_{j}}\rightarrow \g _{\infty }$ 
as $j\rightarrow \infty $,  
and such that for each $j\in \NN $, we have $\g _{n_{j}}\in S.$ 
We set $b_{j}:= f_{\g ,n_{j}-1}(b)$, for each $j\in \NN .$ 
We may assume that there exists a point $a\in \CC $ 
such that $b_{j}\rightarrow a$ as $j\rightarrow \infty .$ 
Since $\g _{n_{j}}(b_{j})=f_{\g ,n_{j}}(b)=
\pi _{\CCI }(f_{\g }^{n_{j}}(\g ,b))\in 
\pi _{\CCI }(\tilde{J}(f))=J(G)$, 
we obtain  
$a\in \g _{\infty }^{-1}(J(G)).$ Moreover, 
by \cite[Theorem 2.20.5(b)]{SdpbpI}, 
we obtain 
\begin{equation}
\label{mainth314pfeq0.5}
a\in \g _{\infty }^{-1}(J(G))\subset \CC \setminus J_{\min }.
\end{equation}
Combining this with \cite[Theorem 2.20.2]{SdpbpI}, 
it follows that 
$r:=\inf \{ |a-b| \mid b\in P^{\ast }(G)\} >0.$
Let $\epsilon $ be arbitrary number with $0<\epsilon <\frac{r}{10}.$
We may assume that for each $j\in \NN $, 
we have $b_{j}\in D(a,\frac{\epsilon }{2}).$  
 For each $j\in \NN ,$ let $\varphi _{j}$ be the well-defined inverse branch 
of $f_{\g ,n_{j}-1}$ on 
$D(a,r)$ such that $\varphi _{j}(b_{j})=b.$  
Let $V_{j}:= \varphi _{j}(D(b_{j},r-\epsilon ))$, for each 
$j\in \NN .$  
We now show the following claim.\\ 
Claim 6. diam $V_{j}\rightarrow 0$, 
as $\j\rightarrow \infty .$ 

 To show this claim, suppose that this is not true. Then, there exists a 
 strictly increasing sequence $\{ j_{k}\} _{k\in \NN }$ of  
 positive integers and a positive constant $\kappa $ 
 such that for each $k\in \NN $, diam $V_{j_{k}}\geq \kappa .$ 
 From the Koebe distortion theorem, it follows that 
there exists a positive constant $c_{0}$ such that for each 
$k\in \NN $, 
$V_{j_{k}}\supset D(b,c_{0}).$ This implies that for each $k\in \NN $, 
$f_{\g ,v_{k}}(D(b,c_{0}))\subset 
D(b_{j_{k}},r-\epsilon )$, where $v_{k}:= n_{j_{k}}-1.$ 
Since $v_{k}\rightarrow \infty $ as $k\rightarrow \infty $ and 
$f_{\g ', n}|_{F_{\infty }(G)}\rightarrow \infty $ for any 
$\g '\in \GN $, 
it follows that 
for any $n\in \NN $, 
$f_{\g ,n}(D(b,c_{0}))\subset \CCI \setminus F_{\infty }(G)$,  
which implies that $b\in F_{\g }(f).$ However,  
it contradicts $b\in J_{\g }(f).$ 
Hence, Claim 6 holds. 

 Combining the Koebe distortion theorem and Claim 6, 
 we see that 
 there exist a constant $K>0$ and two sequences $\{ r_{j}\} _{j\in \NN }$ and 
 $\{ R_{j}\} _{j\in \NN }$ of positive numbers such that 
 $K\leq \frac{r_{j}}{R_{j}}<1$ and 
 $D(b,r_{j})\subset V_{j}\subset D(b,R_{j})$ for each $j\in \NN $, 
 and such that $R_{j}\rightarrow 0$ as $j\rightarrow \infty $. 
From (\ref{lebpteq}), it follows that 
\begin{equation}
\label{fatou0eq1}
\lim _{j\rightarrow \infty }
\frac{m_{2}\left(V_{j}\cap F_{\g }(f)\right)}
{m_{2}(V_{j})}=0.
\end{equation}   
For each $j\in \NN $, let 
$\psi _{j}: D(0,1)\rightarrow \varphi _{j}(D(a,r))$ be a biholomorphic 
map such that 
$\psi _{j}(0)=b.$ 
Then, there exists a constant $0<c_{1}<1$ such that 
for each $j\in \NN $, 
\begin{equation}
\label{psid0}
\psi _{j}^{-1}(V_{j})\subset 
D(0,c_{1}).
\end{equation}
Combining this with (\ref{fatou0eq1}) and the Koebe distortion theorem, 
it follows that  
\begin{equation}
\label{fatou0eq2}
\lim _{j\rightarrow \infty }
\frac{m_{2}\left(\psi _{j}^{-1}(V_{j}\cap F_{\g }(f))\right)}
{m_{2}(\psi _{j}^{-1}(V_{j}))}=0.
\end{equation}   
Since $\varphi _{j}^{-1}(\psi _{j}(D(0,1)))\subset 
D(a,r)$ for each $j\in \NN $, 
combining (\ref{psid0}) and 
Cauchy's formula yields that 
there exists a constant $c_{2}>0$ such that for any $j\in \NN $,  
\begin{equation}
\label{fatou0eq3}
|(f_{\g ,n_{j}-1}\circ \psi _{j})'(z)|\leq c_{2}\ \mbox{ on }
\psi _{j}^{-1}(V_{j}).
\end{equation}     
Combining (\ref{fatou0eq2}) and (\ref{fatou0eq3}), 
we obtain 
\begin{align*}
 \frac{m_{2}\left(D(b_{j},r-\epsilon )\cap 
F_{\sigma ^{n_{j}-1}(\g )}(f)\right)}
{m_{2}(D(b_{j},r-\epsilon ))} 
 =  \frac{m_{2}\left((f_{\g ,n_{j}-1}\circ \psi _{j})
(\psi _{j}^{-1}(V_{j}\cap 
F_{\g }(f)))\right)}
{m_{2}(D(b_{j},r-\epsilon ))}\\ 
 = 
\frac{\int _{\psi _{j}^{-1}(V_{j}\cap F_{\g }(f))}
|(f_{\g ,n_{j}-1}\circ \psi _{j})'(z)|^{2}\ dm_{2}(z)}
{m_{2}(\psi _{j}^{-1}(V_{j}))}
\cdot 
\frac{m_{2}(\psi _{j}^{-1}(V_{j}))}
{m_{2}(D(b_{j},r-\epsilon ))} 
 \rightarrow 0,
\end{align*}
as $j\rightarrow \infty .$ 
Hence, we obtain 
$$\lim _{j\rightarrow \infty }
\frac{m_{2}\left(D(b_{j},r-\epsilon )\cap 
J_{\sigma ^{n_{j}-1}(\g )}(f)\right)}
{m_{2}(D(b_{j},r-\epsilon ))}=1.$$
Since  $J_{\sigma ^{n_{j}-1}(\g )}(f)\subset J(G)$ 
for each $j\in \NN $, and $b_{j}\rightarrow a$ as $j\rightarrow \infty $, 
it follows that 
$$\frac{m_{2}(D(a,r-\epsilon )\cap J(G))}
{m_{2}(D(a,r-\epsilon ))}=1.$$ 
This implies that 
$D(a,r-\epsilon )\subset J(G).$ 
Since this is valid for any $\epsilon $, 
we must have that $D(a,r)\subset J(G).$ It follows that  
the point $a$ belongs to a connected component 
$J$ of $J(G)$ such that $J\cap P^{\ast }(G)\neq \emptyset .$ 
However, \cite[Theorem 2.20.2]{SdpbpI} 
implies 
that the component $J$ is equal to $J_{\min }$, 
which leads to a contradiction since we have  
(\ref{mainth314pfeq0.5}).
 Hence, we have shown statement 2(d) 
of 
 Theorem~\ref{mainth3}. 

 Therefore, we have proved statements \ref{mainth3-0} 
 and \ref{mainth3-1} of Theorem~\ref{mainth3}.  
\qed 

We now demonstrate statement \ref{mainth3-3} of Theorem~\ref{mainth3}. \\ 
\noindent 
{\bf Proof of statement \ref{mainth3-3} of Theorem~\ref{mainth3}:}
First, we remark that the subset $W_{S,p}$ of 
$\GN $ is a $\sigma $-invariant compact set.
 Hence, $\overline{f}:
W_{S,p}\times \CCI \rightarrow W_{S,p}\times \CCI $ 
is a polynomial skew product over 
$\sigma :W_{S,p}\rightarrow W_{S,p}.$  
Suppose that $\tilde{J}(\overline{f})\cap 
P(\overline{f})\neq \emptyset $ 
and let $(\g ,y)\in \tilde{J}(\overline{f})\cap 
P(\overline{f})$ be a point. 
Then, since the point $\g =(\g _{1},\g _{2},\ldots )$ belongs to 
$W_{S,p}$, 
there exists a number $j\in \NN $ such that 
$\g _{j}\in S.$ 
Combining this with the condition that $G\in {\cal G}_{dis}$ and  
\cite[Theorem 2.20.5(b), Theorem 2.20.2]{SdpbpI}, 
we have $\g_{j}^{-1}(J(G))\subset \CC \setminus 
\hat{K}(G) \subset \CC \setminus P(G).$ 
Moreover, we have that 
$\pi _{\CCI }(\overline{f}_{\g }^{j-1}(\g ,y))
= \pi _{\CCI }(f_{\g }^{j-1}(\g ,y))
\in J_{\sigma ^{j-1}(\g )}(f)
= \g _{j}^{-1}\left(J_{\sigma ^{j}(\g )}(f)\right)
\subset \g_{j}^{-1}(J(G)).$ 
Hence, we obtain 
\begin{equation}
\label{mainth3-3pfeq1}
\pi _{\CCI }(\overline{f}_{\g }^{j-1}(\g ,y))\in \CC \setminus 
P(G).
\end{equation} 
However, since $(\g ,y)\in P(\overline{f})$, 
we have that $\pi _{\CCI }(\overline{f}_{\g }^{j-1}(\g ,y))
\in \pi _{\CCI }(P(\overline{f}))
\subset P(G)$, which contradicts (\ref{mainth3-3pfeq1}). 
Hence, we must have that 
$\tilde{J}(\overline{f})\cap P(\overline{f})=
\emptyset .$ Therefore, 
$\overline{f}:W_{S,p}\times \CCI \rightarrow W_{S,p}\times \CCI $ 
is a hyperbolic polynomial skew product over 
the shift map $\sigma :W_{S,p}\rightarrow W_{S,p}.$

Combining this with statement 2(a) of Theorem~\ref{mainth3} and 
\cite[Theorem 4.1]{SdpbpIII}
we conclude that 
there exists a constant $K_{S,p}\geq 1$ such that 
for each $\g \in W_{S,p},\ J_{\g }(\overline{f})$ is 
a $K_{S,p}$-quasicircle.
Moreover, by statement 2(c) of Theorem~\ref{mainth3},  
we have $J_{\g }(\overline{f})=J_{\g }(f)=\hat{J}_{\g }(f).$ 

Hence, we have shown statement \ref{mainth3-3} of Theorem~\ref{mainth3}.  
\qed \\ 
 We now demonstrate Theorem~\ref{mainth3-2}.\\ 
\noindent {\bf Proof of Theorem~\ref{mainth3-2}:}
Let 
$\g \in R(\G ,\G \setminus \G _{\min })$ and 
$y\in $ int$(K_{\g }(f)).$ 
Combining statement \ref{mainth3-0} of Theorem~\ref{mainth3}  
and \cite[Lemma 1.10]{S1}, 
we obtain $\liminf _{n\rightarrow \infty }$ $
d(f_{\g,n}(y),$ $J(G))>0.$ 
Combining this with 
\cite[Lemma 3.13]{SdpbpIII}
and statement \ref{mainth3-0} of 
Theorem~\ref{mainth3},  
we see that there exists a point 
$a\in P^{\ast }(G)\cap F(G)$ such that 
$\liminf _{n\rightarrow \infty }d(f_{\g ,n}(y),a)=0.$ 
Since $P^{\ast }(G)\cap F(G)\subset $ int$(\hat{K}(G))$ 
(which follows from the condition that $G\in {\cal G}$),  
it follows that there exists a positive integer 
$l$ such that 
$f_{\g ,l}(y)\in \mbox{int}(\hat{K}(G)).$
Combining this  
and the same method as that in the proof of Claim 3 in 
the proof of statements \ref{mainth3-0} and \ref{mainth3-1} of Theorem~\ref{mainth3},  
we get that there exists exactly one 
bounded component $U_{\g }$ of $F_{\g }(f).$ 
Combining it with 
\cite[Proposition 4.6]{SdpbpIII}, 
it follows that $J_{\g }(f)$ is a Jordan curve. 
%
Moreover, by \cite[Theorem 2.14-(4)]{S1}, 
we have $\hat{J}_{\gamma }(f)=J_{\gamma }(f).$ 

 Thus, we have proved Theorem~\ref{mainth3-2}.
\qed

\  

 We now demonstrate Theorem~\ref{cantorqc}.

\noindent 
{\bf Proof of Theorem~\ref{cantorqc}:}
Let $V$ be an open set with $J(G)\cap V\neq \emptyset .$ 
We may assume that $V$ is connected.
Then, by \cite[Corollary 3.1]{HM1}
there exists an element $\alpha _{1}\in G$ such that 
$J(\alpha _{1})\cap V\neq \emptyset .$ 
Since we have $G\in {\cal G}_{dis}$,  
\cite[Theorem 2.1]{SdpbpI}
implies that there exists an element 
$\alpha _{2}\in G$ such that 
no connected component $J$ of $J(G)$ satisfies  
$J(\alpha _{1})\cup J(\alpha _{2})\subset J.$ 
Hence, we have $\langle \alpha _{1}, \alpha _{2}\rangle \in {\cal G}_{dis}.$ 
Since $J(\alpha _{1})\cap V\neq \emptyset $, 
combining this with 
\cite[Lemma 3.4(2)]{SdpbpIII} 
we get that there exists an $l_{0}\in \NN $ such that 
for each $l$ with $l\geq l_{0}$, we have 
$J(\alpha _{2}\alpha _{1}^{l})\cap V\neq \emptyset .$ 
Moreover, since no connected component $J$ of $J(G)$ satisfies  
$J(\alpha _{1})\cup J(\alpha _{2})\subset J$, 
\cite[Lemma 3.4(2)]{SdpbpIII}
implies that 
there exists an $l_{1}\in \NN $ such that 
for each $l$ with $l\geq l_{1}$, 
$J(\alpha _{2}\alpha _{1}^{l})\cap 
J(\alpha _{1}\alpha _{2}^{l})=\emptyset .$ 
We fix an $l\in \NN $ with $l\geq \max \{ l_{0},l_{1}\}.$ 
We now show the following claim.\\ 
\noindent Claim 1. 
The semigroup $H_{0}:= \langle \alpha _{2}\alpha _{1}^{l}, 
\alpha _{1}\alpha _{2}^{l}\rangle $ is hyperbolic, and 
for the skew product $\tilde{f}:\GN _{0}\times \CCI 
\rightarrow \GN _{0}\times \CCI $ associated with 
$\G _{0}=\{ \alpha _{2}\alpha _{1}^{l}, \alpha _{1}\alpha _{2}^{l}\} $, 
there exists a constant $K\geq 1$ such that 
for any $\g \in \GN _{0}$, $J_{\g }(\tilde{f})$ is a $K$-quasicircle. 

 To show this claim, 
applying statement \ref{mainth3-3} of Theorem~\ref{mainth3} 
with $\G =\{ \alpha _{1},\alpha _{2}\} , 
S=\G \setminus \G _{\min }, $ and $p=2l+1$, 
we see that 
the polynomial skew product 
$\overline{f}:W_{S, 2l+1}\times \CCI \rightarrow W_{S,2l+1}\times 
\CCI $ over $\sigma :W_{S,2l+1}\rightarrow W_{S,2l+1}$ 
is hyperbolic, and that 
there exists a constant $K\geq 1$ such that 
for each $\g \in W_{S,2l+1}$, $J_{\g }(\overline{f})$ 
is a $K$-quasicircle. 
Moreover, combining the hyperbolicity of $\overline{f}$ above and 
Remark~\ref{hypskewsemigrrem}, 
we see that the semigroup 
$H_{1}$ generated by the family 
$\{ \alpha_{j_{1}}\circ \cdots \circ \alpha _{j_{l+1}}\mid 
1\leq \exists k_{1}\leq l+1 \mbox{ with } j_{k_{1}}=1, 
1\leq \exists k_{2}\leq l+1 \mbox{ with }j_{k_{2}}=2\} $ 
is hyperbolic. 
Hence, the semigroup $H_{0}$, which is a subsemigroup of $H_{1}$, 
is hyperbolic. 
 Therefore, Claim 1 holds.

 We now show the following claim.\\ 
Claim 2. We have either 
$J(\alpha _{2}\alpha _{1}^{l})<J(\alpha _{1}\alpha _{2}^{l})$,  
or $J(\alpha _{1}\alpha _{2}^{l})<J(\alpha _{2}\alpha _{1}^{l}).$

 To show this claim, since 
$J(\alpha _{2}\alpha _{1}^{l})\cap J(\alpha _{1}\alpha _{2}^{l})   
=\emptyset $ and $H_{0}\in {\cal G}$, 
combining these with 
\cite[Lemma 3.9]{SdpbpIII}, 
we obtain Claim 2. 

 By Claim 2, we have the following two cases.\\ 
Case 1. $J(\alpha _{2}\alpha _{1}^{l})<J(\alpha _{1}\alpha _{2}^{l}).$\\ 
Case 2. $J(\alpha _{1}\alpha _{2}^{l})<J(\alpha _{2}\alpha _{1}^{l}).$  

 We may assume that we have Case 1 (when we have Case 2, we can show 
 all statements of our theorem, using the same method as below).  
Let $A:= K(\alpha _{1}\alpha _{2}^{l})\setminus $ 
int$(K(\alpha _{2}\alpha _{1}^{l})).$ 
By Claim 1, we have that 
$J(\alpha _{1}\alpha _{2}^{l})$ and 
$J(\alpha _{2}\alpha _{1}^{l})$ are quasicircles. 
Moreover, 
since $H_{0}\in {\cal G}_{dis }$ and 
$H_{0}$ is hyperbolic, 
we must have 
$P^{\ast }(H_{0})\subset $
int$(K(\alpha _{2}\alpha _{1}^{l})).$ 
Therefore, 
it follows that if we take a small open neighborhood $U$ of 
$A$, then there exists a number $n\in \NN $ such that, 
setting $h_{1}:= (\alpha _{2}\alpha _{1}^{l} )^{n}$ 
and $h_{2}:= (\alpha _{1}\alpha _{2}^{l})^{n}$, 
we have that 
\begin{equation}
\label{cantorqcpfeq01}
h_{1}^{-1}(\overline{U})\cup 
h_{2}^{-1}(\overline{U})\subset U \mbox{ and } 
h_{1}^{-1}(\overline{U})\cap h_{2}^{-1}(\overline{U})=\emptyset . 
\end{equation}
Moreover, combining 
\cite[Lemma 3.4(2)]{SdpbpIII}
and that $J(h_{1})\cap V\neq \emptyset $, 
we get that there exists a $k\in \NN $ such that 
$J(h_{2}h_{1}^{k})\cap V\neq \emptyset .$ 
We set $g_{1}:= h_{1}^{k+1}$ and $g_{2}:= h_{2}h_{1}^{k}.$ 
Moreover, we set $H:= \langle g_{1},g_{2}\rangle .$ Since 
$H$ is a subsemigroup of $H_{0}$ and $H_{0}$ is hyperbolic, 
we have that $H$ is hyperbolic. Moreover, 
(\ref{cantorqcpfeq01}) implies that 
$g_{1}^{-1}(\overline{U})\cup 
g_{2}^{-1}(\overline{U})\subset U$ and 
$g_{1}^{-1}(\overline{U})\cap g_{2}^{-1}(\overline{U})=\emptyset .$ 
Hence, we have shown that for the semigroup $H=\langle g_{1},g_{2}\rangle $, 
statements \ref{cantorqc1},\ref{cantorqc2},
and \ref{cantorqc3} of Theorem~\ref{cantorqc} hold. 

 From statement \ref{cantorqc2} and 
\cite[Corollary 3.2]{HM1},  
 we obtain $J(H)\subset \overline{U}$ 
 and $g_{1}^{-1}(J(H))\cap g_{2}^{-1}(J(H))=\emptyset .$  
Combining this with 
\cite[Lemma 2.4]{S3} and \cite[Lemma 3.5(2)]{SdpbpIII}, 
it follows that the 
skew product $f:\GN _{1}\times \CCI 
\rightarrow \GN _{1}\times \CCI $ 
associated with $\G _{1}= \{ g_{1},g_{2}\} $ 
satisfies that 
$J(H)$ is equal to the disjoint union of 
the sets $\{ \hat{J}_{\g }(f)\} _{\g \in \GN _{1}}.$ 
Moreover, since $H$ is hyperbolic, 
\cite[Theorem 2.14-(2)]{S1} implies that for each 
$\g \in \G _{1}^{\NN }$, 
$\hat{J}_{\g }(f)=J_{\g }(f).$  
In particular, 
the map $\g \mapsto J_{\g }(f)$ 
from $\GN _{1}$ into the space of 
non-empty compact sets in $\CCI $,  is injective. 
Since $J_{\g }(f)$ is connected for each 
$\g \in \GN _{1}$ (Claim 1), 
it follows that for each connected component $J$ of $J(H)$,  
there exists an element $\g \in \GN _{1}$ such that 
$J=J_{\g }(f).$ 
Furthermore, by Claim 1,  
each connected component $J$ of $J(H)$ is a $K$-quasicircle, 
where $K$ is a constant not depending on $J.$ 
 Moreover, by \cite[Theorem 2.14-(4)]{S1}, the map 
 $\g \mapsto J_{\g }(f)$ from $\GN _{1}$ 
 into the space of non-empty compact sets in $\CCI $, 
 is continuous with respect to 
 the Hausdorff metric. 
Moreover, by Theorem~\ref{mainth1}, 
$(\{ J_{\gamma }(f)\} _{\g \in \GN }, \leq )$ is totally ordered. 
Therefore, we have shown that statements 
4(a), 4(b), 4(c), and 4(d) 
hold for $H=\langle g_{1},g_{2}\rangle $ and 
 $f:\GN _{1}\times \CCI \rightarrow \GN _{1}\times \CCI .$  

 We now show that statement 4(e) holds. 
Since we are assuming Case 1, 
Proposition~\ref{bminprop} implies that 
$\{ h_{1},h_{2}\} _{\min }=\{ h_{1}\} .$ 
Hence $J(g_{1})<J(g_{2}).$ 
Combining this with Proposition~\ref{bminprop} and statement 4(b), 
we obtain  
\begin{equation}
\label{cantorqc6pfeq1}
J(g_{1})= J_{\min }(H) \mbox{ and }
J(g_{2})= J_{\max }(H).
\end{equation} 
Moreover, since 
$J(g_{1})=J(\alpha _{2}\alpha _{1}^{l})$, 
$J(\alpha _{2}\alpha _{1}^{l})\cap V\neq \emptyset $, 
$J(g_{2})=J(h_{2}h_{1}^{k})$, and $J(h_{2}h_{1}^{k})\cap 
V\neq \emptyset $, 
it follows that 
\begin{equation}
\label{cantorqc6pfeq2}
J_{\min }(H)\cap V\neq \emptyset \mbox{ and }
J_{\max }(H)\cap V\neq \emptyset .
\end{equation}
Let $\g \in \GN $ be an element such that 
$J_{\g }(f)\cap (J_{\min }(H)\cup J_{\max }(H))=\emptyset .$ 
By statement 4(b), 
we obtain 
\begin{equation}
\label{cantorqc6pfeq3}
J_{\min }(H)<J_{\g }(f)<J_{\max }(H).
\end{equation}
Since we are assuming $V$ is connected, 
combining (\ref{cantorqc6pfeq2}) and 
(\ref{cantorqc6pfeq3}), 
we obtain $J_{\g }(f)\cap V\neq \emptyset .$ 
Therefore, we have proved that 
statement 4(e) holds. 

 We now show that statement 4(f) holds. 
 To show that, 
let $\omega  =(\omega  _{1},\omega  _{2},\ldots )\in \GN _{1}$ be an 
element such that 
$\sharp (\{ j\in \NN 
\mid \omega _{j}=g_{1}\} )=
\sharp (\{ j\in \NN \mid \omega _{j}=g_{2}\} )=\infty .$ 
For each $r\in \NN $, 
let $\omega  ^{r}=(\omega  ^{r}_{1},\omega  ^{r}_{2},\ldots )\in \GN _{1}$ 
be the element such that 
$\begin{cases}
\omega  ^{r}_{j}=\omega  _{j} \ \ (1\leq j\leq r), \\  
\omega  ^{r}_{j}=g_{1}     \ \ (j\geq r+1).
\end{cases} $
 Moreover, 
let $\rho ^{r}=(\rho ^{r}_{1},\rho ^{r}_{2},\ldots )\in \GN _{1}$ 
be the element such that 
 $\begin{cases}
\rho^{r}_{j}=\omega  _{j} \ \ (1\leq j\leq r), \\  
\rho ^{r}_{j}=g_{2}     \ \ (j\geq r+1).
\end{cases} $
Combining (\ref{cantorqc6pfeq1}) and  
statements 4(a) and 4(b),
we see that for each $r\in \NN $, 
$J(g_{1})<J_{\sigma ^{r}(\omega  )}(f)<J(g_{2}).$ 
Hence, by statement \ref{mainth1-3} of Theorem~\ref{mainth1},  
we get that for each $r\in \NN $, 
$(f_{\omega  ,r})^{-1}(J(g_{1}))<
(f_{\omega  ,r})^{-1}\left(J_{\sigma ^{r}(\omega  )}(f)\right)
<(f_{\omega  ,r})^{-1}(J(g_{2}))$. 
Since we have 
$ (f_{\omega  ,r})^{-1}(J(g_{1}))=J_{\omega  ^{r}}(f)$, 
$(f_{\omega  ,r})^{-1}\left(J_{\sigma ^{r}(\omega  )}(f)\right)
=J_{\omega  }(f)$, 
and $(f_{\omega  ,r})^{-1}(J(g_{2}))$ $=J_{\rho ^{r}}(f)$, 
it follows that 
\begin{equation}
\label{cantorqcpfeq1}
J_{\omega  ^{r}}(f)<J_{\omega  }(f)<
J_{\rho ^{r}}(f),  
\end{equation}
for each $r\in \NN .$ 
Moreover, since 
$\omega  ^{r}\rightarrow \omega  $ and 
$\rho ^{r}\rightarrow \omega  $ in 
$\GN _{1}$ as $r\rightarrow \infty $, 
statement 4(d) 
implies that 
$J_{\omega  ^{r}}(f)\rightarrow 
J_{\omega  }(f)$ and 
$J_{\rho ^{r}}(f)\rightarrow 
J_{\omega  }(f)$ as $r\rightarrow \infty $, 
with respect to the Hausdorff metric. 
Combining these with (\ref{cantorqcpfeq1}) and statements 4(b) and 4(c), 
we get that 
for any connected component $W$ of $F(H)$, 
we must have $\partial W\cap J_{\omega  }(f)
=\emptyset .$ Since $F(G)\subset F(H)$, 
it follows that 
for any connected component $W'$ of $F(G)$, 
$\partial W'\cap J_{\omega  }(f)=\emptyset .$
Therefore, we have shown that statement 4(f) holds. 
%
%
 
  Thus, we have proved Theorem~\ref{cantorqc}.
\qed 
\subsection{Proofs of the results in \ref{fjjq}}
\label{Proofs of fjjq}
In this section, we demonstrate Theorems~\ref{mainthjbnq} and \ref{t:tghyp}.   
We need the following notations and lemmas.
\begin{df}
Let $h$ be a polynomial with $\deg (h)\geq 2.$ Suppose that 
$J(h)$ is connected. Let $\psi $ be a biholomorphic map 
$\CCI \setminus \overline{D(0,1)}\rightarrow  
F_{\infty }(h)$ with $\psi (\infty )=\infty $ such that 
$\psi ^{-1}\circ h\circ \psi (z)=z^{\deg (h)}$, 
for each $z\in \CCI \setminus \overline{D(0,1)}.$ 
(For the existence of the biholomorphic map $\psi $, see 
\cite[Theorem 9.5]{M}.) For each 
$\theta \in \partial D(0,1)$, we set 
$T(\theta ):= \psi (\{ r\theta \mid 1<r\leq \infty \} ).$ 
This is called the external ray (for $K(h)$) 
with angle $\theta .$ 
\end{df}

\begin{lem}
\label{jbnqlem1}
Let $h$ be a polynomial with $\deg (h)\geq 2.$ 
Suppose that $J(h)$ is connected and locally connected 
and $J(h)$ is not a Jordan curve. Moreover, suppose that 
there exists an attracting periodic point of $h$ in $K(h).$ 
Then, for any $\epsilon >0$, there exist a point $p\in J(h)$  
and elements $\theta _{1}, \theta _{2}\in \partial D(0,1)$ 
with $\theta _{1}\neq \theta _{2}$, such that all of the following hold.
\begin{enumerate}
\item 
For each $i=1,2$,  the external ray $T(\theta _{i})$ lands 
at the point $p.$ 
\item Let $V_{1}$ and $V_{2}$ be the two connected components of  
 $\CCI \setminus (T(\theta _{1})\cup T(\theta _{2})\cup \{ p\} ).$  
 Then, for each $i=1,2,$ 
 $V_{i}\cap J(h)\neq \emptyset .$ Moreover, there exists an $i$ such that 
 diam $(V_{i}\cap K(h))\leq \epsilon .$ 
\end{enumerate}
\end{lem}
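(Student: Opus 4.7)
The plan is to combine local connectedness (via the Carath\'eodory extension of $\psi$) with dynamical pullback to produce landing configurations with arbitrarily small angular gap, and then read off the two conclusions from uniform continuity.

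Since $J(h)$ is locally connected, Carath\'eodory's theorem extends $\psi$ continuously to $\overline{\psi}\colon \CCI\setminus D(0,1)\to \overline{F_\infty(h)}$, so every external ray lands and $\overline{\psi}|_{\partial D(0,1)}$ is a continuous surjection onto $J(h)$ satisfying the functional equation $h\circ \overline{\psi}(\theta)=\overline{\psi}(d\theta)$, where $d=\deg(h)$. Declare $\theta \sim \theta'$ iff $\overline{\psi}(\theta)=\overline{\psi}(\theta')$. Because $J(h)$ is not a Jordan curve, $\overline{\psi}|_{\partial D(0,1)}$ fails to be injective, and so $\sim$ has at least one nontrivial equivalence class; fix $\theta_1^0\neq \theta_2^0$ mapping to a common point $p^0\in J(h)$.

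Next I would pull back by $h^n$. Using the correspondence between the $d^n$ inverse branches of $h^n$ near $p^0$ and the $d^n$ sectors of $\partial D(0,1)$ for $\theta\mapsto d^n\theta$, one obtains, for each preimage of $p^0$ in $h^{-n}(p^0)$, a coherent pair $(\theta_1^{(n)},\theta_2^{(n)})$ of $\sim$-equivalent angles whose angular gap is at most $|\theta_1^0-\theta_2^0|/d^n$. By uniform continuity of $\overline{\psi}$ on $\partial D(0,1)$, choose $n$ large enough that every arc of length $|\theta_1^0-\theta_2^0|/d^n$ maps into a set of diameter at most $\epsilon/4$, and then, shrinking the pair further if necessary, replace it by the truly \emph{adjacent} sub-pair $(\theta_1,\theta_2)$ inside its short arc; by adjacency, no other angle in the open short arc $A\subset\partial D(0,1)$ between $\theta_1,\theta_2$ maps to $p:=\overline{\psi}(\theta_1)=\overline{\psi}(\theta_2)$.

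Finally I would verify the two conclusions. The set $T(\theta_1)\cup T(\theta_2)\cup\{p\}$ is a Jordan curve in $\CCI$ whose complement has two components $V_1,V_2$, matched to the two arcs $A,A'$ of $\partial D(0,1)\setminus\{\theta_1,\theta_2\}$. By adjacency the nonempty connected set $\overline{\psi}(A)$ lies in $V_1\cap J(h)$; and $\overline{\psi}(A')\subset\overline{V_2}$ contains points other than $p$ (as $J(h)$ is infinite), so $V_2\cap J(h)\neq\emptyset$. For the diameter, $\overline{V_1}\cap J(h)=\overline{\psi}(\overline{A})$ has diameter at most $\epsilon/4$; since $J(h)$ is connected, every bounded Fatou component $U$ is simply connected, so $\operatorname{diam}\overline{U}=\operatorname{diam}\partial U$, and any such $U$ contained in $V_1$ satisfies $\partial U\subset \overline{V_1}\cap J(h)$, giving $\operatorname{diam}\overline{U}\leq \epsilon/4$. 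Assembling, $\operatorname{diam}(V_1\cap K(h))\leq 3\epsilon/4<\epsilon$. The main technical obstacle is making the coherent pullback precise, that is, tracking which preimage pair lands at which point of $h^{-n}(p^0)$ and certifying the angular-gap scaling $1/d^n$; the hypothesis of an attracting periodic point of $h$ in $K(h)$ seems to serve mainly to guarantee $\operatorname{int}(K(h))\neq\emptyset$, so that the bounded-Fatou-component analysis in the diameter step is genuinely needed and the conclusion of the lemma is nonvacuous.
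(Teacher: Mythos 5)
Your proposal reproduces the outer layers of the paper's argument (Carath\'eodory extension of $\psi $, a non-trivial identified pair of angles because $J(h)$ is not a Jordan curve, uniform continuity to convert a short arc of angles into a wedge $V_{i}$ with small diam$(V_{i}\cap K(h))$, and both sides meeting $J(h)$), but the central step --- producing identified pairs of angles with arbitrarily small angular gap --- is exactly the point you leave unproved, and the mechanism you sketch for it is false as stated. The pairing of preimage angles at a given point of $h^{-n}(p^{0})$ is a \emph{global} combinatorial fact about where the full preimage rays land; it is not controlled by ``inverse branches of $h^{n}$ near $p^{0}$'' and the sectors of $\theta \mapsto d^{n}\theta $. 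Concretely, for $h(z)=z^{2}-1$ the rays at angles $1/3$ and $2/3$ land at the fixed point $\alpha $; at the other preimage $-\alpha $ of $\alpha $ the landing rays are those at angles $1/6$ and $5/6$, whose gap is $1/3$, not at most $|1/3-2/3|/2=1/6$. So your claim that \emph{each} preimage of $p^{0}$ carries an equivalent pair with gap at most $|\theta _{1}^{0}-\theta _{2}^{0}|/d^{n}$ fails, and with it the stated route to the small wedge. (A correct combinatorial repair would have to argue about which pairs among all $2d^{n}$ preimage angles are identified, using non-crossing of the ray system --- none of which is in your text.)

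This is also where you misread the hypothesis on the attracting periodic point: it is not there merely to make int$(K(h))\neq \emptyset $, it is the engine of the paper's construction and its substitute for your unproved ``coherent pullback''. The paper puts an attracting fixed point $a$ (after passing to an iterate) and hence a critical point $c$ in a bounded component $U$ of int$(K(h))$, pulls back the region $V_{0}$ cut off by the two original rays along the orbit of $a$, and uses $c\in V_{n}$ to force $e_{n}=\deg (h^{n}:V_{n}\rightarrow V_{0})\rightarrow \infty $. Since $h$ has at most $d-1$ finite critical values, among the $e_{n}$ complementary wedges $W_{j}$ of $\overline{V_{n}}$ (each bounded by the closure of two external rays) there are wedges carrying well-defined inverse branches of $h^{-k}$ for every $k$; the univalent pullbacks $B_{k}=\zeta _{k}(W_{j})$ are then automatically Jordan domains bounded by exactly two external rays landing at a common point, and the arc of angles $\overline{\psi ^{-1}(B_{k}\cap F_{\infty }(h))}\cap \partial D(0,1)$ has length shrinking like $d^{-k}$, which gives the small wedge after applying the continuous extension of $\psi $. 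In short: the downstream bookkeeping in your proposal is fine, but the existence of the small identified pair is a genuine gap, and the hypothesis you set aside is precisely what the paper uses to close it.
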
 
\begin{proof}
Let $\psi : \CCI \setminus \overline{D(0,1)}\rightarrow 
F_{\infty }(h)$ be a biholomorphic map with $\psi (\infty )=\infty $ 
such that for each $z\in 
\CCI \setminus \partial D(0,1),  
\psi ^{-1}\circ h\circ \psi (z)=z^{\deg (h)}.$ 
Since $J(h)$ is locally connected, the map 
$\psi : \CCI \setminus \overline{D(0,1)}\rightarrow 
F_{\infty }(h)$ extends continuously over $\partial D(0,1)$, 
mapping $\partial D(0,1)$ onto $J(h).$ 
Moreover, since $J(h)$ is not a Jordan curve, it follows that 
there exist a point $p_{0}\in J(h)$ and two points $t_{1},t_{2}\in 
\partial D(0,1)$ with $t_{1}\neq t_{2}$ such that 
two external rays 
$T(t _{1})$ and $T(t _{2})$ land at the same 
point $p_{0}.$ 
Considering a higher iterate of $h$ if necessary, 
we may assume that there exists an attracting fixed point of $h$ in 
int$(K(h)).$ 
Let $a\in $ int$(K(h))$ be an attracting fixed point of $h$ 
and let $U$ be the connected component of int$(K(h))$ containing $a.$ 
Then, there exists a critical point $c\in U$ of $h.$ 
Let $V_{0}$ be the connected component of 
$\CCI \setminus (T(t_{1})\cup T(t_{2})\cup \{ p_{0}\} )$ 
containing $a.$ 
Moreover, for each $n\in \NN $, 
let $V_{n}$ be the connected component of $(h^{n})^{-1}(V_{0})$ 
containing $a.$ 
Since $c\in U$, we get that for each $n\in \NN $, 
$c\in V_{n}.$ Hence, setting 
$e_{n}:= \deg (h^{n}:V_{n}\rightarrow V_{0})$, 
it follows that 
$e_{n}\rightarrow \infty \mbox{ as }n\rightarrow \infty .$ 
We fix an $n\in \NN $ satisfying $e_{n}>d$, where 
$d:= \deg (h).$ 
Since 
$\deg (h^{n}:V_{n}\cap F_{\infty }(h)\rightarrow 
V_{0}\cap F_{\infty }(h))=\deg (h^{n}:V_{n}\rightarrow V_{0})$, 
we have that the number of connected components of 
$V_{n}\cap F_{\infty }(h)$ is equal to $e_{n}.$ 
Moreover, every connected component of $V_{n}\cap F_{\infty }(h)$ 
is a connected component of 
$(h^{n})^{-1}(V_{0}\cap F_{\infty }(h)).$ Hence, it follows that 
there exist mutually disjoint arcs $\xi _{1},\xi _{2},\ldots ,\xi _{e_{n}}$ 
in $\CC $ satisfying all of the following.
\begin{enumerate}
\item For each $j$,   
$h^{n}(\xi _{j})=(T(t_{1})\cup T(t_{2})\cup \{ p_{0}\} )\cap \CC .$ 
\item 
For each $j$, 
$\xi _{j}\cup \{ \infty \} $ is the closure of union of 
two external rays and $\xi _{j}\cup \{ \infty \} $ is a Jordan curve.
\item We have $\partial V_{n}=\xi _{1}\cup \cdots \cup \xi _{e_{n}}\cup \{ \infty \} .$ 
\end{enumerate} 
For each $j=1,\ldots ,e_{n}$, 
let $W_{j}$ be the connected component 
of $\CCI \setminus (\xi _{j}\cup \{ \infty \} )$ that does not contain 
$V_{n}.$ 
Then, each $W_{j}$ is a connected component of 
$\CCI \setminus \overline{V_{n}}.$ Hence, 
for each $(i,j)$ with $i\neq j$, 
$W_{i}\cap W_{j}=\emptyset .$ 
Since the number of critical values of $h$ in $\CC $ is less than or equal to 
$d-1$, we have that 
$\sharp (\{1\leq j\leq e_{n}\mid W_{j}\cap CV(h)=\emptyset \} )\geq 
e_{n}-(d-1).$ 
Therefore, denoting by $u_{1,j}$ 
the number of well-defined inverse branches of 
$h$ on $W_{j}$, we obtain
$\sum _{j=1}^{e_{n}}u_{1,j}\geq d(e_{n}-(d-1))\geq d.$ 
Inductively, denoting by $u_{k,j}$ the number of well-defined inverse 
branches of $h^{k}$ on $W_{j}$, we obtain
\begin{equation}
\label{jbnqlem1pfeqa1}
\sum _{j=1}^{e_{n}}u_{k,j}\geq d(d-(d-1))\geq d, \mbox{for each }k\in \NN .
\end{equation}
For each $k\in \NN $, we take a well-defined   
inverse branch $\zeta _{k}$ of $h^{k}$ on a 
domain $W_{j}$, and let $B_{k}:=\zeta _{k}(W_{j}).$ 
Then, $h^{k}:B_{k}\rightarrow W_{j}$ is biholomorphic.
Since $\partial B_{k}$ is the closure of finite union of 
external rays and $h^{n+k}$ maps each connected component of 
$(\partial B_{k})\cap \CC $ 
onto $(T(t_{1})\cup T(t_{2})\cup \{ p_{0}\} )\cap \CC $, 
$B_{k}$ is a Jordan domain. 
Hence, $h^{k}:B_{k}\rightarrow W_{j}$ induces a 
homeomorphism $\partial B_{k}\cong \partial W_{j}.$ 
Therefore, $\partial B_{k}$ is the closure of 
union of two external rays, which implies that 
$B_{k}\cap F_{\infty }(h)$ is a connected component of 
$(h^{k})^{-1}(W_{j}\cap F_{\infty }(h)).$ 
Hence, we obtain  
\begin{equation}
\label{jbnqlem1eq2}
l\left( \overline{\psi ^{-1}(B_{k}\cap F_{\infty }(h))}\cap \partial D(0,1)
\right) \rightarrow 0 \mbox{ as } k\rightarrow \infty ,
\end{equation}  
where $l(\cdot )$ denotes the arc length of a subarc of 
$\partial D(0,1).$ 
Since $\psi :\CCI \setminus \overline{D(0,1)}\rightarrow  F_{\infty }(h)$ 
extends continuously over $\partial D(0,1)$, 
(\ref{jbnqlem1eq2}) implies that 
diam $(B_{k}\cap J(h))\rightarrow 0$ as $k\rightarrow \infty .$ 
Hence, there exists a $k\in \NN $ such that 
diam $(B_{k}\cap K(h))\leq \epsilon .$ 
Let $\theta _{1},\theta _{2}\in \partial D(0,1)$ be 
two elements such that 
$\partial B_{k}=\overline{T(\theta _{1})\cup T(\theta _{2})}.$ 
Then, there exists a point $p\in J(h)$ such that 
each $T(\theta _{i})$ lands at the point $p.$ 
By \cite[Lemma 17.5]{M}, any of two connected components of 
$\CCI \setminus (T(\theta _{1})\cup T(\theta _{2})\cup \{ p\} )$ 
intersects $J(h).$ 

 Thus, we have proved Lemma~\ref{jbnqlem1}.
 \end{proof}
\begin{lem}
\label{060414astlem1}
Let $G$ be a polynomial semigroup generated by a 
compact subset $\G $ of {\em Poly}$_{\deg \geq 2}.$
Let $f:\GNCR $ be the skew product associated with the family 
$\G .$ Suppose $G\in {\cal G}_{dis}.$ 
Let $m\in \NN $ and suppose that there exists an element 
$(h_{1},\ldots ,h_{m})\in \G ^{m}$ such that setting 
$h=h_{m}\circ \cdots \circ h_{1}$, 
$J(h)$ is connected and locally connected, and 
$J(h)$ is not a Jordan curve. Moreover, suppose that 
there exists an attracting periodic point of $h$ in 
$K(h).$ Let $\alpha =(\alpha _{1},\alpha _{2},\ldots )\in 
\GN $ be the element such that for each 
$k,l\in \NN \cup \{ 0\} $ with $1\leq l\leq m$, 
$\alpha _{km+l}=h_{l}.$ 
Let $\rho _{0}\in \G \setminus \G _{\min }$ be an element and 
let 
$\beta =(\rho _{0},\alpha _{1},\alpha _{2},\ldots )\in \GN .$ 
Moreover, let 
$\psi _{\beta }:\CCI \setminus \overline{D(0,1)}
\rightarrow A_{\beta }(f)$ be a biholomorphic map 
with $\psi _{\beta }(\infty )=\infty .$ 
Furthermore, for each $\theta \in 
\partial D(0,1)$, 
let 
$T_{\beta }(\theta )=\psi _{\beta }(\{ r\theta \mid 
1<r\leq \infty \} ).$ 
Then, for any $\epsilon >0$, there exist a point 
$p\in J_{\beta }(f)$ and elements 
$\theta _{1},\theta _{2}\in \partial D(0,1)$ with 
$\theta _{1}\neq \theta _{2}$, such that 
both of the following statements 1 and 2 hold.
\begin{enumerate}
\item For each $i=1,2$, $T_{\beta }(\theta _{i})$ lands at $p.$
\item Let $V_{1}$ and $V_{2}$ be the two connected components 
of 
$\CCI \setminus (T_{\beta }(\theta _{1})\cup T_{\beta }(\theta _{2})
\cup \{ p\} ).$ Then, 
for each $i=1,2$, $V_{i}\cap J_{\beta }(f)\neq \emptyset .$ 
Moreover, there exists an $i$ such that 
diam $(V_{i}\cap K_{\beta }(f))\leq \epsilon $ and 
such that $V_{i}\cap J_{\beta }(f)\subset 
\rho _{0}^{-1}(J(G))\subset \CC \setminus P(G).$ 
\end{enumerate}  
\end{lem}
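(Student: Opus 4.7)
The plan is to lift the landing structure of Lemma~\ref{jbnqlem1} from $h$ to $\beta$ through the polynomial $\rho_0$. First I record the basic identifications. Since $\alpha$ is $m$-periodic with $\alpha_{km+l}=h_l$ and each partial iterate $h_l\circ\cdots\circ h_1$ is a polynomial, a standard normal-family argument gives $J_\alpha(f)=J(h)$, $A_\alpha(f)=F_\infty(h)$, and $K_\alpha(f)=K(h)$. The identity $f_{\beta,n+1}=f_{\alpha,n}\circ\rho_0$ then yields $J_\beta(f)=\rho_0^{-1}(J(h))$, $A_\beta(f)=\rho_0^{-1}(F_\infty(h))$, and $K_\beta(f)=\rho_0^{-1}(K(h))$. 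Since $G\in{\cal G}$, every critical value of $\rho_0$ lies in $P^{\ast}(G)\subset\hat{K}(G)\subset K(h)$, so the only critical value of $\rho_0$ in $F_\infty(h)$ is $\infty$, with local degree $d_\rho:=\deg(\rho_0)$. A Riemann--Hurwitz computation then forces $A_\beta(f)$ to be connected with $\chi=1$, hence simply connected, so the biholomorphism $\psi_\beta:\CCI\setminus\overline{D(0,1)}\to A_\beta(f)$ with $\psi_\beta(\infty)=\infty$ exists; I normalize it so that $\rho_0\circ\psi_\beta(z)=\psi_\alpha(z^{d_\rho})$, where $\psi_\alpha$ is the B\"{o}ttcher map for $h$. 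Consequently $\rho_0(T_\beta(\theta))=T_\alpha(\theta^{d_\rho})$ for every $\theta\in\partial D(0,1)$.

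Next I apply Lemma~\ref{jbnqlem1} to $h$. Its proof supplies countably many candidate landing points (indexed by the pullback level of the regions $B_k$), so after shrinking $\epsilon'$ and invoking the finiteness of the critical value set $CV(\rho_0)\cap\CC\subset\hat{K}(G)$, I can arrange the resulting data $(p_0,\theta'_1,\theta'_2,V'_{i_0})$ so that $p_0\in J(h)\setminus CV(\rho_0)$, $\mathrm{diam}(V'_{i_0}\cap K(h))\leq\epsilon'$, and the enclosing disk $\overline{D(p_0,\epsilon')}$ is disjoint from $CV(\rho_0)$. Because any critical value of $\rho_0$ inside $V'_{i_0}$ would have to lie in $V'_{i_0}\cap\hat{K}(G)\subset V'_{i_0}\cap K(h)\subset\overline{D(p_0,\epsilon')}$, it follows that $V'_{i_0}$ contains no critical value of $\rho_0$. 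Hence $\rho_0:\rho_0^{-1}(V'_{i_0})\to V'_{i_0}$ is an unbranched degree-$d_\rho$ covering over a simply connected set, splitting into $d_\rho$ components, each mapped biholomorphically onto $V'_{i_0}$.

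Set $X_0:=T(\theta'_1)\cup T(\theta'_2)\cup\{p_0\}$ and $\tilde X_0:=\rho_0^{-1}(X_0)$. Since $p_0$ is not a critical value, $\rho_0^{-1}(p_0)$ consists of $d_\rho$ distinct points $p^{(1)},\ldots,p^{(d_\rho)}$, and exactly two rays of $\tilde X_0$ land at each $p^{(j)}$: one of angle $\theta_{1,j}$ and one of angle $\theta_{2,j}$ with $\theta_{i,j}^{d_\rho}=\theta'_i$. Because $T(\theta'_i)\subset F_\infty(h)$ avoids all critical values of $\rho_0$, both lifts along a fixed local branch of $\rho_0^{-1}$ near $\infty$ terminate at the same $p^{(j)}$; the $2d_\rho$ angles therefore interleave around $\partial D(0,1)$ as $\theta_{1,0},\theta_{2,0},\theta_{1,1},\theta_{2,1},\ldots$, with the pair $(\theta_{1,j},\theta_{2,j})$ appearing at angular gap $\Delta=(\arg\theta'_2-\arg\theta'_1)/d_\rho$ and no other ray-angle of $\tilde X_0$ strictly between them. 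Pick any $j$, set $p:=p^{(j)}$ and $\theta_i:=\theta_{i,j}$: then $\theta_1\neq\theta_2$ and both $T_\beta(\theta_i)$ land at $p$. Because distinct external rays of $A_\beta(f)$ are disjoint in $A_\beta(f)\setminus\{\infty\}$ and $p^{(j')}\neq p$ for $j'\neq j$, the component $V_{\mathrm{sh}}$ of $\CCI\setminus(T_\beta(\theta_1)\cup T_\beta(\theta_2)\cup\{p\})$ lying on the short-arc side at $\infty$ meets no ray and no landing point of $\tilde X_0$; hence $V_{\mathrm{sh}}$ equals a single component of $\CCI\setminus\tilde X_0$. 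By the arc-length identification embedded in the proof of Lemma~\ref{jbnqlem1} (small-$K(h)$-diameter side $=$ short B\"{o}ttcher arc of $\alpha$), $V_{\mathrm{sh}}$ is the component mapping biholomorphically onto $V'_{i_0}$.

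The three claimed properties now fall out. The biholomorphism $\rho_0|_{V_{\mathrm{sh}}}$ sends $V'_{i_0}\cap K(h)\subset\overline{D(p_0,\epsilon')}$ to $V_{\mathrm{sh}}\cap K_\beta(f)$; Koebe-type distortion at the non-critical point $p$ bounds its diameter by $\epsilon$ once $\epsilon'$ is chosen sufficiently small. Each $V_i$ meets $J_\beta(f)$: $V_{\mathrm{sh}}$ because $V'_{i_0}\cap J(h)\neq\emptyset$ and the biholomorphism is onto, and the complementary component because it contains the remaining $d_\rho-1\geq 1$ points $p^{(j')}\in\rho_0^{-1}(J(h))=J_\beta(f)$. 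Finally $V_i\cap J_\beta(f)\subset\rho_0^{-1}(J(h))\subset\rho_0^{-1}(J(G))$, and \cite[Theorem 2.20.5(b), Theorem 2.20.2]{SdpbpI} applied to $\rho_0\in\Gamma\setminus\Gamma_{\min}$ gives $\rho_0^{-1}(J(G))\subset\CC\setminus\hat{K}(G)\subset\CC\setminus P(G)$. The main technical obstacle is the cyclic-order analysis that identifies $V_{\mathrm{sh}}$ with a single component of $\CCI\setminus\tilde X_0$; once that adjacency is secured, the biholomorphic correspondence reduces the diameter, non-emptiness, and containment claims to their already-established analogues for $h$.
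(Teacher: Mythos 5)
Your overall strategy is the same as the paper's: identify $J_{\beta }(f)=\rho _{0}^{-1}(J(h))$, $A_{\beta }(f)=\rho _{0}^{-1}(F_{\infty }(h))$, take a small ``wedge'' $B_{k}$ (bounded by two external rays of $h$ landing at a common point) produced in the proof of Lemma~\ref{jbnqlem1}, arrange that it avoids the critical values of $\rho _{0}$, and pull it back univalently by $\rho _{0}$ to get the desired wedge for $\beta $. The concluding steps (non-emptiness of both sides, the containment $\rho _{0}^{-1}(J(G))\subset \CC \setminus P(G)$ via \cite[Theorem 2.20.5(b), Theorem 2.20.2]{SdpbpI}) also match; your cyclic-order analysis of the lifted rays is more elaborate than what is needed but is sound in spirit.

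The genuine gap is in the one step that carries the new content of this lemma: guaranteeing that the chosen wedge contains no critical value of $\rho _{0}$. Your argument is circular. You want $\overline{D(p_{0},\epsilon ')}\cap CV(\rho _{0})=\emptyset $, but $p_{0}$ is the landing point attached to a pullback domain $B_{k}$, and $k$ is chosen precisely so that $\mathrm{diam}\,(B_{k}\cap J(h))\leq \epsilon '$; so the location of $p_{0}$ depends on $\epsilon '$, and ``shrinking $\epsilon '$ and invoking finiteness of $CV(\rho _{0})$'' does not break the loop. A priori, every sufficiently deep candidate you are offered could sit on top of a critical value of $\rho _{0}$ (the candidate landing points are preimages $h^{-k}(p_{0})$ restricted to the branches that happen to be well defined, and nothing in your argument controls where those particular branches land relative to the finite set $CV(\rho _{0})$). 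The paper closes this gap with a counting argument that is absent from your proposal: it first replaces $h$ by a higher iterate so that $d=\deg (h)>\deg (\rho _{0})$, and then uses the estimate (\ref{jbnqlem1pfeqa1}), $\sum _{j}u_{k,j}\geq d$, which says that at \emph{every} pullback level $k$ there are at least $d$ well-defined inverse branches; their images are pairwise disjoint, while $\rho _{0}$ has at most $\deg (\rho _{0})-1<d$ critical values in $\CC $, so at each level at least one candidate domain $B_{k}$ misses $CV(\rho _{0})$ entirely. Without this (or some substitute pigeonhole argument), your selection of a critical-value-free wedge is unjustified, and everything downstream — the unbranched covering, the two-ray boundary of the lift, the diameter bound — rests on it.
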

\begin{proof}
We use the notation and argument in the proof of 
Lemma~\ref{jbnqlem1}. 
Taking a higher iterate of $h$, 
we may assume that 
$d:=\deg (h)>\deg (\rho _{0}).$ 
Then, from (\ref{jbnqlem1pfeqa1}), 
it follows that 
for each $k\in \NN $, 
we can take a well-defined inverse branch 
$\zeta _{k}$ of 
$h^{k}$ on a domain $W_{j}$ such that 
setting $B_{k}:=\zeta _{k}(W_{j})$, 
$B_{k}$ does not contain any critical value of 
$\rho _{0}.$ 
By (\ref{jbnqlem1eq2}), 
there exists a $k\in \NN $ such that 
diam $(B_{k}\cap J(h))\leq \epsilon '$, where 
$\epsilon '>0$ is a small number. 
Let $B$ be a connected component of 
$\rho _{0}^{-1}(B_{k}).$ 
Then, there exist a point 
$p\in J_{\beta }(f)$ and elements 
$\theta _{1},\theta _{2}\in \partial D(0,1)$ with 
$\theta _{1}\neq \theta _{2}$ such that 
for each $i=1,2,$ $T_{\beta }(\theta _{i})$ lands at 
$p$, and such that $B$ is a connected component of 
$\CCI \setminus (T_{\beta }(\theta _{1})\cup 
T_{\beta }(\theta _{2})\cup \{ p\} ).$ Taking $\epsilon '$ so small, 
we obtain diam $(B\cap K_{\beta }(f))=$ diam $(B\cap J_{\beta }(f))
\leq \epsilon .$ Moreover, 
since $\rho _{0}\in \G \setminus \G _{\min }$, 
by \cite[Theorem 2.20.5(b), Theorem 2.20.2]{SdpbpI} 
we obtain 
$J_{\beta }(f)
=\rho _{0}^{-1}(J(h))
\subset \rho _{0}^{-1}(J(G))
\subset  
\CC \setminus P(G).$ 
Hence, $B\cap J_{\beta }(f)\subset 
\rho _{0}^{-1}(J(G))\subset 
\CC \setminus P(G).$ 
Therefore, we have proved Lemma~\ref{060414astlem1}.  
\end{proof}
 We now demonstrate statement \ref{mainthjbnq1} of Theorem~\ref{mainthjbnq}.\\ 
{\bf Proof of statement \ref{mainthjbnq1} of Theorem~\ref{mainthjbnq}:}  
Let $\g $ be as in statement \ref{mainthjbnq1} of Theorem~\ref{mainthjbnq}.  
Then, by Theorem~\ref{mainth3-2},
$J_{\g }(f)$ is a Jordan curve. 
Moreover, setting $h=h_{m}\circ \cdots \circ h_{1}$, 
since $h$ is hyperbolic and $J(h)$ is not a quasicircle, 
$J(h)$ is not a Jordan curve.
Combining this with 
\cite[Lemma 4.5]{SdpbpIII}
and 
Lemma~\ref{jbnqlem1}, it follows that 
$J_{\g }(f)$ is not a quasicircle. 
Moreover, $A_{\g }(f)$ is a John domain 
(cf. \cite[Theorem 1.12]{S4}). 
Combining the above arguments with \cite[Theorem 9.3]{NV}, 
we conclude that the bounded component $U_{\g }$ of 
$F_{\g }(f)$ is not a John domain. 

 Thus, we have proved statement \ref{mainthjbnq1} of Theorem~\ref{mainthjbnq}. 
\qed 

\ 

 We now demonstrate statement \ref{mainthjbnq2} of Theorem~\ref{mainthjbnq}.\\ 
{\bf Proof of statement \ref{mainthjbnq2} of Theorem~\ref{mainthjbnq}:}
Let $\rho _{0}, \beta ,\g $ be as in statement \ref{mainthjbnq2} of 
Theorem~\ref{mainthjbnq}. 
By Theorem~\ref{mainth3-2}, 
$J_{\g }(f)$ is a Jordan curve. 
By statement \ref{mainth2-4} of Theorem~\ref{mainth2},  
we have 
$\emptyset \neq $ int$(\hat{K}(G))\subset $ int$(K(h)).$ 
Moreover, $h$ is semi-hyperbolic. Hence, 
$h$ has an attracting periodic point in $K(h).$ 
Combining 
\cite[Lemma 4.5]{SdpbpIII}
and 
Lemma~\ref{060414astlem1}, we get that 
$J_{\g }(f)$ is not a quasicircle. 
Combining this with the argument in the proof of 
statement \ref{mainthjbnq1} of 
Theorem~\ref{mainthjbnq},  
it follows that 
$A_{\g }(f)$ is a John domain, but the bounded component 
$U_{\g }$ of $F_{\g }(f)$ is not a John domain. 

 Thus, we have proved statement \ref{mainthjbnq2} of Theorem~\ref{mainthjbnq}. 
\qed 

We now prove Theorem~\ref{t:tghyp}.\\ 
{\bf Proof of Theorem~\ref{t:tghyp}}: 
By \cite[Theorem 3.17]{S7}, we have 
$h_{1}^{-1}(J(G))\cap h_{2}^{-1}(J(G))=\emptyset .$ 
Combining this with \cite[Lemma 3.5(2), Lemma 3.6]{SdpbpIII} and \cite[Theorem 2.14(2)]{S1}, 
we obtain that $J(G)=\amalg _{\g \in \GN} J_{\g }(f)$ (disjoint union) and 
for each connected component $J$ of $J(G)$, there exists a unique $\g \in \GN $ such that 
$J=J_{\g }(f).$ 

In order to prove that we have exactly one of statements 1 and 2, 
suppose that $J(h_{1})$ and $J(h_{2})$ are Jordan curves. 
By Proposition~\ref{bminprop}, we may assume that 
$J_{\min }(G)=J(h_{1})$ and $J_{\max }(G)=J(h_{2}).$ 
Then by \cite[Theorem 2.20.5(b)]{SdpbpI}, we have 
$\mbox{int}(K(h_{1}))=\mbox{int}(\hat{K}(G)).$ Thus $P^{\ast }(G)$ is included in a connected component 
of int$(\hat{K}(G)).$ Combining this with \cite[Proposition 2.25]{SdpbpIII}, 
we obtain that statement 1 of Theorem~\ref{t:tghyp} holds.  

We now suppose that $J(h_{j})$ is not a Jordan curve. Then, as above, we have 
$J_{\min }(G)=J(h_{j})$ and $J_{\max }(G)=J(h_{i})$, where $i\neq j.$ 
By \cite[Theorem 2.20.4]{SdpbpI}, $J(h_{i})$ is a quasicircle.  
Moreover, combining statement \ref{mainth3-3} of Theorem~\ref{mainth3}, 
statement~\ref{mainthjbnq1} of Theorem~\ref{mainthjbnq}
and \cite[Lemma 4.4]{SdpbpIII}, we obtain that exactly one of statements (a),(b),(c) of  
statement 2 of Theorem~\ref{t:tghyp} holds. Thus, we have proved Theorem~\ref{t:tghyp}.   
\qed 
\subsection{Proofs of the results in \ref{random}}
\label{Proofs of random}
In this subsection, we will demonstrate results in 
Section~\ref{random}. 

 we now prove Corollary~\ref{rancor1}.\\ 
{\bf Proof of Corollary~\ref{rancor1}:} 
By Remark~\ref{jminrem}, there exists 
a compact subset 
$S$ of $\G \setminus \G _{\min }$ such that 
the interior of $S$ with respect to the 
space $\G $ is not empty.  
Let ${\cal U}:=R(\G ,S).$ Then, 
it is easy to see that 
${\cal U}$ is residual in $\GN $, and that 
for each Borel probability measure 
$\tau $ on Poly$_{\deg \geq 2}$ with $\G _{\tau }=\G $, 
we have $\tilde{\tau }({\cal U})=1.$ Moreover, 
by statements \ref{mainth3-0} and \ref{mainth3-1} of Theorem~\ref{mainth3}, 
each $\g \in {\cal U}$ satisfies properties 
\ref{rancor1-1},\ref{rancor1-2},\ref{rancor1-3}, and 
\ref{rancor1-4} of Corollary~\ref{rancor1}. Hence, 
we have proved Corollary~\ref{rancor1}.
\qed  

We now prove Corollary~\ref{c:rancor2sh}. \\ 
{\bf Proof of Corollary~\ref{c:rancor2sh}:}   
Let ${\cal U}$ be as in the proof of Corollary~\ref{rancor1}. 
By using Theorem~\ref{mainth3-2}, 
it is easy to see that statement~\ref{c:rancor2sh1} holds. 
We now prove statement~\ref{c:rancor2sh2}. 
From our assumption, there exist $h_{1},\ldots ,h_{m}\in \G $ 
such that $J(h_{m}\circ \cdots \circ h_{1})$ is not a quasicircle. 
Let $\alpha =(\alpha _{1},\alpha _{2},\ldots )\in \G ^{\NN }$ be such that 
for each $k,l\in \NN \cup \{ 0\} $ with $1\leq l\leq m$, 
$\alpha _{km+l}=h_{l}.$ Let $\rho _{0}\in \G \setminus \G _{\min }$ be an element and 
let $\beta =(\rho _{0}, \alpha _{1},\alpha _{2},\ldots )\in \G ^{\NN }.$ 
Let 
${\cal V}:= \{ \g \in R(\G, \G \setminus \G _{\min }) \mid \exists \{ n_{k}\} \mbox{ s.t. } \sigma ^{n_{k}}(\g )\rightarrow \beta \} .$ 
By statement~\ref{mainthjbnq2} of Theorem~\ref{mainthjbnq},  
 ${\cal V} $ satisfies the desired properties. 
\qed


\begin{thebibliography}{90}
\bibitem{Be} A. Beardon, 
{\em Iteration of Rational Functions}, 
Graduate Texts in Mathematics 132, Springer- Verlag, 1991. 
\bibitem{Br} R. Br\"{u}ck,\ 
{\em Geometric properties of Julia sets of the composition of 
polynomials of the form $z^{2}+c_{n}$},\ 
Pacific J. Math.,\ {\bf 198} (2001), no. 2,\ 347--372.

\bibitem{BBR} R. Br\"{u}ck,\ M. B\"{u}ger and  
S. Reitz,\ {\em Random iterations of polynomials of the 
form $z^{2}+c_{n}$: Connectedness of Julia sets},\ 
Ergodic Theory Dynam. Systems,\ 
 {\bf 19},\ (1999),\ No.5,\ 1221--1231. 
\bibitem{Bu1} M. B\"{u}ger, 
{\em Self-similarity of Julia sets of the composition of 
polynomials}, 
Ergodic Theory Dynam. Systems, {\bf 17} (1997), 1289--1297.
\bibitem{Bu2} M. B\"{u}ger, {\em 
On the composition of polynomials of the form} $z\sp 2+c\sb n$, 
 Math. Ann. 310 (1998), no. 4, 661--683.
\bibitem{CJY} L. Carleson, P. W. Jones and J. -C. Yoccoz, 
{\em Julia and John,\ }Bol. Soc. Bras. Mat. {\bf 25,\ N.1} 1994,\ 1--30.
\bibitem{D} R. Devaney, {\em An Introduction to Chaotic 
Dynamical Systems, Second edition.} 
Addison-Wesley Studies in Nonlinearity. 
Addison-Wesley Publishing Company, Advanced Book Program, Redwood City, CA, 1989.
\bibitem{FS} J. E. Fornaess and  N. Sibony,\ 
{\em Random iterations of rational functions},\ 
Ergodic Theory Dynam. Systems, {\bf 11}(1991),\ 687--708.
\bibitem{GQL} Z. Gong, W. Qiu and Y. Li, 
{\em Connectedness of Julia sets for a quadratic random 
dynamical system}, Ergodic Theory Dynam. Systems, (2003), {\bf 23}, 
1807-1815.
\bibitem{GR} Z. Gong and F. Ren,\ {\em A random dynamical 
system formed by infinitely many functions, }
 Journal of Fudan University, {\bf 35}, 1996,\ 387--392.

\bibitem{HM1} A. Hinkkanen and  G. J. Martin,
{\em The Dynamics of Semigroups of Rational Functions I},
 Proc. London Math. Soc. (3){\bf 73}(1996),\ 358--384.
\bibitem{J} M. Jonsson,\ 
{\em Ergodic properties of fibered rational maps },\ 
Ark. Mat.,\ 38 (2000), pp 281--317.
\bibitem{LV} O. Lehto and  K. I. Virtanen,\ 
{\em Quasiconformal Mappings in the plane,\ }
 Springer-Verlag,\ 1973.
\bibitem{M} J. Milnor, 
{\em Dynamics in One Complex Variable}
(Third Edition),  
Annals of Mathematical Studies, Number 160, 
Princeton University Press, 2006. 
\bibitem{NV} R. N\"{a}kki and  J. V\"{a}is\"{a}l\"{a},\ 
{\em John Discs,\ } Expo. math. 9(1991),\ 3--43.
\bibitem{PT} K. Pilgrim and Tan Lei,\ 
{\em Rational maps with disconnected Julia set},\ 
Asterisque\ 261,\ (2000), 349--384.
\bibitem{St} R. Stankewitz, 
{\em Density of repelling fixed points in the Julia set of a rational or entire semigroup, II}, 
Discrete Contin. Dyn. Syst. 32 (2012), no. 7, 2583--2589.
\bibitem{SSS} R. Stankewitz, T. Sugawa, and H. Sumi,\ 
{\em Some counterexamples in dynamics of rational semigroups},\ 
Annales Academiae Scientiarum Fennicae Mathematica Vol. 29,\ 
2004,\ 357--366.
\bibitem{SS} R. Stankewitz and H. Sumi, 
{\em Dynamical properties and structure of Julia sets of 
postcritically bounded polynomial semigroups},  
Trans. Amer. Math. Soc., 363 (2011), no. 10, 5293--5319.
\bibitem{Steins} D. Steinsaltz, 
{\em Random logistic maps Lyapunov exponents}, 
Indag. Mathem., N. S., 12 (4), 557--584, 2001. 
\bibitem{S5} H. Sumi,\ {\em On Dynamics of 
Hyperbolic Rational Semigroups,}\ 
Journal of Mathematics of Kyoto University,\ Vol. 37,\ 
No. 4,\ 1997,\ 717--733.

\bibitem{S3} H. Sumi,\ {\em Skew product maps related to finitely 
generated rational semigroups,\ } 
Nonlinearity,\ {\bf 13},\ (2000), 
995--1019.


\bibitem{S1} H. Sumi,\ {\em Dynamics of sub-hyperbolic and 
semi-hyperbolic rational semigroups and skew products},\  
Ergodic Theory Dynam. Systems,  (2001),\ {\bf 21},\ 563--603.
\bibitem{S6} H. Sumi,\ 
{\em Dimensions of Julia sets of expanding rational semigroups},\ 
Kodai Mathematical Journal,\ Vol. 28,\ No. 2,\ 2005,\ pp390--422. 
(See also http://arxiv.org/abs/math.DS/0405522.)


\bibitem{S4} H. Sumi,\ 
{\em Semi-hyperbolic fibered rational maps and rational 
semigroups},\ 
Ergodic Theory Dynam. Systems (2006), 
{\bf 26}, 893--922. 
\bibitem{S10} H. Sumi, 
{\em The space of postcritically bounded 2-generator polynomial 
semigroups with hyperbolicity}, 
RIMS Kokyuroku 1494, 62--86, 2006. 
(Proceedings paper.) 


\bibitem{S9} H. Sumi, 
{\em Random dynamics of polynomials and devil's-staircase-like 
functions in the complex plane}, 
Applied Mathematics and Computation 187 (2007) pp489-500. 
(Proceedings paper. ) 

\bibitem{S7} H. Sumi,
{\em Interaction cohomology of forward or backward self-similar systems}, 
Adv. Math.,  222 (2009), no. 3, 729--781.
\bibitem{SdpbpI} H. Sumi, 
{\em Dynamics of postcritically bounded polynomial semigroups I: connected components 
of the Julia sets}, Discrete and Continuous Dynamical Systems - Ser. A, 
Vol. 29, No. 3, 2011, 1205--1244. 
\bibitem{SdpbpIII} H. Sumi, 
{\em Dynamics of postcritically bounded polynomial semigroups III: 
classification of semi-hyperbolic semigroups and random Julia sets which are Jordan curves but not quasicircles}, 
Ergodic Theory Dynam. Systems, (2010), {\bf 30}, No. 6, 1869--1902.
\bibitem{Splms} H. Sumi, 
{\em Random complex dynamics and semigroups of holomorphic maps}, 
Proc. London Math. Soc., (2011), 102 (1), 50--112. 
\bibitem{Ssugexp} H. Sumi, 
{\em Rational semigroups, random complex dynamics and singular functions on the complex plane}, 
survey article, Selected Papers on Analysis and Differential Equations, Amer. Math. Soc. Transl. (2) Vol. 230, 2010, 161--200. 
\bibitem{Scpsb} H. Sumi, 
{\em Cooperation principle, stability and bifurcation in 
random complex dynamics}, 
preprint, 
http://arxiv.org/abs/1008.3995. 

\bibitem{S8} H. Sumi, 
{\em Random complex dynamics and devil's coliseums}, preprint, 
http://arxiv.org/abs/1104.3640. 
\bibitem{SU1} H. Sumi and M. Urbanski,\ 
{\em Real analyticity of Hausdorff dimension for 
expanding rational semigroups}, 
Ergodic Theory Dynam. Systems (2010), Vol. 30, No. 2, 601-633.
\bibitem{SU2} H. Sumi and M. Urbanski,\ 
{\em Transversality family of expanding rational semigroups}, 
Adv. Math., 234 (2013) 697--734.

\bibitem{SY} Y. Sun and C-C. Yang.\ 
{\em On the connectivity of the Julia set 
of a finitely generated rational semigroup},\ 
Proc. Amer. Math. Soc.,\ Vol. 130,\ 
No. 1,\ 49--52,\ 2001.

\end{thebibliography}
\end{document}